\newtheorem{thm}{Theorem}[section]
\newtheorem{lem}[thm]{Lemma}
\newtheorem{prop}[thm]{Proposition}
\newtheorem{cor}[thm]{Corollary} 
\newtheorem{de}[thm]{Definition} 
\newtheorem{rem}[thm]{Remark}
\newtheorem{ex}[thm]{Example}
\newcommand{\BZ}{{\mathbb{Z}}}
\newcommand{\BR}{{\mathbb{R}}}
\newcommand{\BQ}{{\mathbb{Q}}}
\newcommand{\BO}{{\mathcal{O}_p}}
\newcommand{\BOplus}{{\mathcal{O}_p^+}}
\newcommand{\BS}{{\mathcal{S}_p}}
\newcommand{\BSplus}{{\mathcal{S}^+_p}}
\newcommand{\BH}{{\mathcal{H}}}
\newcommand{\BF}{{\mathbb{F}}}
\newcommand{\Si}{{\Sigma}}
\newcommand{\Sip}{{\Sigma^{\diamond}}}
\DeclareMathOperator{\Heeg}{M_{\BH}}
\DeclareMathOperator{\Signature}{Sign}
\newcommand{\I}{{\mathrm I}}
\newcommand{\cC}{{\mathcal{C}}}
\newcommand{\la}{{\lambda}}
\DeclareMathOperator{\rad}{rad}
\DeclareMathOperator{\genus}{genus}
\DeclareMathOperator{\Lk}{Lk}
\DeclareMathOperator{\sgn}{sgn}
\DeclareMathOperator{\Int}{Int}
\DeclareMathOperator{\Id}{Id}
\DeclareMathOperator{\Sp}{Sp}
\DeclareMathOperator{\rank}{rank}
\begin{document}
\title[Maslov index, Mapping Class Groups and TQFT]{Maslov index, Lagrangians,
Mapping Class Groups and TQFT}

\author{Patrick M. Gilmer}
\address{Department of Mathematics\\
Louisiana State University\\
Baton Rouge, LA 70803\\
USA}
\email{gilmer@math.lsu.edu}
\thanks{The first author was partially supported by  NSF-DMS-0604580,  NSF-DMS-0905736}
\urladdr{www.math.lsu.edu/\textasciitilde gilmer/}

\author{Gregor Masbaum}
\address{Institut de Math{\'e}matiques de Jussieu (UMR 7586  
CNRS)\\  
Case 247\\
4 pl. Jussieu\\
75252 Paris Cedex 5\\
FRANCE }
\email{masbaum@math.jussieu.fr}
\urladdr{www.math.jussieu.fr/\textasciitilde masbaum/}

\date{July 3, 2011}

\begin{abstract} Given a mapping class $f$ of an oriented
  surface $\Si$ and a
  lagrangian $\la$ in the first homology of $\Si$, we define an
  integer  
$n_\la(f)$.  
We use 
$n_\la(f)\pmod 4$
to describe 
  a
   universal central extension of the
  mapping class group of $\Si$ as an index-four subgroup of the
  extension constructed from the Maslov index of triples of lagrangian subspaces in the homology of
  the surface. We give two 
descriptions of this subgroup.  
One is 
  topological using surgery, the other is homological and builds on
  work of  Turaev and work of Walker. Some applications to TQFT are
  discussed. They are based on the fact that our construction allows one to precisely describe how the phase
  factors that arise in the skein theory approach to TQFT-representations of the mapping class group depend on the choice of a
  lagrangian on the surface.
\end{abstract}

\maketitle
\tableofcontents
 
\section{Introduction} \label{sec.intro}

The mapping class group  $\Gamma_{g}$ of a surface of genus $g$ has a
long history in low-dimensional topology. In this paper, we are
concerned with central extensions of 
$\Gamma_{g}$, which have proved to be important in TQFT.  It follows from
Harer's work \cite{H} that $\Gamma_{g}$ has  a universal central
extension by $\BZ$, for $g \ge 5$ (later works improve this to $g \ge
4$). The 
cohomology class  of 
 such an 
extension is a generator of  
$H^2(\Gamma_g;\BZ)$ (this group is isomorphic to $\BZ$ for $g\geq 3$). 
One way to obtain explicit $2$-cocycles representing cohomology
classes of central extensions of $\Gamma_g$ is to pull back cocycles of the
symplectic group $\Sp(g,\BZ)$ via the map  
$\Gamma_g \rightarrow \Sp(g,\BZ)$
which sends a mapping class $f\in \Gamma_{g}$ to
the induced map  
on the homology of the surface. 
The most prominent such $2$-cocycle among topologists is probably the signature
cocycle for $\Sp(g,\BZ)$, defined by Meyer \cite{Me} using  
signatures of certain 4-manifolds which fiber over a disk with two
holes. We will use $\tau$ to denote the pull-back of Meyer's cocycle
to the mapping class group $\Gamma_{g}$.
Meyer's work  implies that the cohomology class $[\tau]$  is divisible
by four, and the class $[\tau]/4$ is a generator of 
$H^2(\Gamma_g;\BZ)$.  
However, $\tau$ itself
is not divisible by $4$, and Meyer did not
give an explicit $\BZ$-valued cocycle representing $[\tau]/4$. This
was done by 
Turaev   \cite{T2,T3}, who 
had independently studied the signature cocycle from a different 
point of view. Turaev 
showed how to
modify $\tau$ by the coboundary of a certain explicit $1$-cochain to find a cocycle which is divisible by
four. Thus, Turaev's work gives an explicit cocycle for 
a
 universal
central extension of $\Gamma_{g}$.

Renewed interest in these questions was sparked by Atiyah \cite{A},
who pointed out that the signature cocycle was closely
related to the problem of resolving anomalies in TQFT. Anomalies are
responsible for the fact that TQFT-representations of mapping class
groups are often only projective representations. Resolving the
anomalies means replacing these projective representations by
linear representations of appropriate central extensions of the
mapping class group. 
In \cite{A}, Atiyah 
suggested the notion of 
$2$-framings 
to resolve anomalies.  Blanchet,
Habegger, Masbaum and Vogel  \cite{BHMV2} used the notion of
$p_1$-structures to resolve anomalies in their construction of TQFT's
from the skein theory of the Kauffman bracket. The 
projective factors arising in the 
skein-theoretical
construction of TQFT were computed explicitly in Masbaum-Roberts
\cite{MR}. 

The central extensions of $\Gamma_g$ considered in the present paper
are constructed 
using yet another approach to resolving
anomalies which was 
pioneered by Walker \cite{W}, and further developed by Turaev
\cite{T}.  
 For an early use of this approach, see \cite{JEA}.
As far as the mapping class group is concerned, this method depends on fixing a lagrangian subspace  $\lambda$ 
 of the first rational homology of the surface. One then uses the Maslov index
 of 
triples 
of lagrangian subspaces to define a central extension of
 $\Gamma_g$.  
Let us denote this extension by $\widetilde \Gamma_g$.  
 The group $\widetilde \Gamma_g$ is thus given explicitly as
the set of pairs $\{(f,n)| f \in \Gamma_g, n\in \BZ\}$, with
multiplication defined by a certain cocycle $m_\lambda$ which we call
the Maslov cocycle. This cocycle is
also known as the Shale-Weil cocycle, which is discussed for instance
in \cite{LV}. 
In contrast with the signature cocycle $\tau$, the Maslov cocycle 
depends on the chosen lagrangian $\lambda$. But it turns out that in
cohomology, one has $[m_\la] =-[\tau]$. Thus the class $[m_\la]/4$
corresponds to an index-four  subgroup 
of $\widetilde \Gamma_g$ which we denote by  ${\widetilde \Gamma_g}^{++}$. If
$g \ge 4$, ${\widetilde \Gamma_g}^{++}$
is a universal central extension  
of 
$\Gamma_g.$ The main aim of the
present paper is to explain how one can get one's hands on explicit elements of
this group ${\widetilde \Gamma_g}^{++}$, and to understand the 
 role 
 played by the chosen lagrangian $\la$ in this description.

Let us briefly describe the organization and main results of this
paper.  We find it convenient to denote the extended mapping
class group $\widetilde \Gamma_g$ by
$\widetilde\Gamma(\Si)$,  where 
$\Si$ stands for the `extended' surface 
consisting of a surface together with a fixed 
lagrangian (see the beginning of
Section~\ref{sec3} for more details). Similarly, we will denote
${\widetilde \Gamma_g}^{++}$ by $\widetilde\Gamma(\Si)^{++}$. 
In Sections~\ref{sec2} and~\ref{sec3}, we review basic concepts about
Maslov index and the 
extended cobordism category and define the extended mapping class
group.   
The multiplication in $\widetilde \Gamma(\Si)$ is defined in formula
(\ref{comp}) in Section~\ref{sec3} (this formula is restated in terms of the
Maslov cocycle in formula (\ref{ml1}) in Section~\ref{sec5b}).

 In Section~\ref{newsec3}, we use
extended surgery to define certain specific lifts of Dehn twists to 
$\widetilde\Gamma(\Si)$ and prove a surgery formula computing, for any
word $\mathfrak w$ in Dehn twists, the product in
$\widetilde\Gamma(\Si)$ of the corresponding lifts. This formula is stated in
Theorem~\ref{new4.1}. It involves the signature of the linking matrix
of a framed link constructed from the  
word $\mathfrak w$ and the lagrangian $\la$.
Our construction here is somewhat similar to the
work of Roberts and one of us in
\cite{MR}, but the context is different, as there were no lagrangians
in \cite{MR}. Also, the framed link we are using is different from the
one used in \cite{MR}. The framed link used in \cite{MR} would be  
appropriate for our purposes only for words $\mathfrak w$ representing
the identity 
mapping class, but not in general.

In Section~\ref{newsec5}, we then define  $\widetilde \Gamma(\Si)^{++}$  
as the subgroup of ${\widetilde \Gamma(\Si)}$ 
generated by the above-mentioned lifts of Dehn twists, slightly
shifted (see Definition~\ref{ind4}).  The fact that $\widetilde
  \Gamma(\Si)^{++}$ has index four in 
${\widetilde \Gamma(\Si)}$ is not obvious from this definition. This fact 
will follow from a second, purely algebraic 
description of $\widetilde \Gamma(\Si)^{++}$, which we state in
Section~\ref{sec5} and prove in Section~\ref{sec5b}. We define an
integer $n_\la(f)$ for any mapping class $f$ and lagrangian $\lambda$
and show in Theorems~\ref{hom} and
its Corollary \ref{hom-ind} 
that $\widetilde \Gamma(\Si)^{++}$ is the subset of
$\widetilde \Gamma(\Si)$ given by the $(f,n)$ with $n\equiv n_\la(f)
\pmod 4$. 
Our
formula for $n_\la(f)$ uses Turaev's $1$-cochain from \cite{T2,T3}, but adds
to it a term which explicitly depends on the lagrangian $\la$. It is
remarkable that Turaev's cochain  is defined using a certain
non-symmetric bilinear form depending only on $f$, while our
additional term is the signature of this same form restricted to a subspace on
which the form is symmetric (but the subspace depends on the lagrangian). The
proof of  
Theorem~\ref{hom}
uses 
a formula of
Walker \cite[p.~124]{W} relating the signature cocycle to the Maslov
cocycle. We remark that Walker's formula is in an unfinished manuscript, which does
not claim to get the signs right.    
We state a
version of 
his formula, 
in terms of our definitions and conventions,  as
Theorem  \ref{ctd}, and give a detailed version of the proof Walker
outlines. Also, Turaev  defined his
version of the signature
cocycle in a purely algebraic fashion,  and he did not give the precise
relationship with Meyer's definition. In fact, Turaev's cocycle turns out
to be equal 
to $-\tau$, see Proposition~\ref{fgint}. Since we are
proving a congruence modulo four (and not just modulo two), getting
the signs right is important for us, so we have tried to
deal with these sign issues in some detail. 

In Section~\ref{newsec7}, we discuss the relationship of our
index four subgroup $\widetilde \Gamma(\Si)^{++}$ of $\widetilde
\Gamma(\Si)$ with the index two subgroup  $\widetilde \Gamma(\Si)^{+}$
constructed by one of us in \cite{G}. (It is this relationship which
motivated the superscript $++$ in our notation for  $\widetilde
\Gamma(\Si)^{++}$.)  
 
The preceding results all extend to the mapping class group of a
surface with boundary. The (small) modifications required to do so are
explained in Section~\ref{sec6}. We also explain briefly in
Section~\ref{sec7} how one sees that 
$\widetilde \Gamma(\Si)^{++}$ 
is    a universal central extension in genus at least four.

The remainder of the paper is devoted to applications of our
results to TQFT. As already said, we use Walker's \cite{W} and
Turaev's \cite{T} approach to TQFT, where one
 consider surfaces equipped with the extra structure of a lagrangian
 subspace of their first homology, and 3-manifolds equipped with an
 integer weight. These are called extended manifolds, and the
 resulting extended cobordism category is used to resolve the anomalies that
 arise in TQFT.   We believe that the skein theory approach of
 \cite{BHMV2} modified by substituting  extended manifolds for
 manifolds with $p_1$-structures is the most concrete and computable
 approach to the TQFTs associated to $SU(2)$, and  $SO(3).$ The reason for this precision is
 that a lagrangian subspace may be specified algebraically while a
 $p_1$-structure is harder to specify. In
 Section~\ref{sec8}, we explain how this works  in practice for the
 mapping class group representations. 
See for instance, Theorem
 \ref{8.2}, where we state precisely how the action of an element
 $(f,n)$ of the extended mapping class group 
on the TQFT-module associated to the surface $\Si$
 depends on the chosen lagrangian $\la$. We then use this to do some
 explicit computations (see Proposition~\ref{8.7}) which were used
 in~\cite{GM1}. The beginning of Section~\ref{sec8} is written so as
 to provide a further and more detailed introduction to the
 TQFT-aspects of our results.
 
 In the last section, we briefly consider the integral TQFT
 that we have been studying in \cite{G,GM,GM1} using the precision
 afforded by using extended manifolds. In Corollary \ref{modular}, we
 show that the representations coming from integral TQFT when
 restricted  
to 
$\widetilde \Gamma(\Si)^{++}$
induce modular representations of the ordinary mapping class group.
This was one of our motivations for studying the index  four
    subgroup  $\widetilde \Gamma(\Si)^{++}$.

\vskip 8pt

\noindent{\em Acknowledgments:}
 We thank the referee for his 
comments
which helped us to improve the organization of the paper.

\section{Maslov index, extended manifolds and extended surgery}\label{sec2}
Extended surfaces and 3-manifolds were introduced by Walker \cite{W} and further developed by Turaev \cite{T}. We begin by briefly describing these
notions to fix our conventions, and sketch the background.

Let $V$ be a rational vector space with a nonsingular skew-symmetric form  $\cdot: V \times V \rightarrow \BQ.$
A subspace $\lambda \subset V$ is called lagrangian
if $\lambda= \lambda^\perp$ where   $\lambda^\perp=\{ x\in 
V  
\, | \,
x \cdot    y=0, \ \forall y \in \lambda\}.$
It is easy  to see that $\lambda$ is lagrangian if and only if  $\lambda \subset \lambda^\perp$ and 
$\lambda$ has dimension $(1/2) \dim(V)$.
Recall  the Maslov index of an ordered triple of 
lagrangians $\lambda_1, \lambda_2$,  $\lambda_3$ in $V$. The Maslov index 
$\mu(\lambda_1, \lambda_2,  \lambda_3) \in \BZ$ is defined to be the
signature of the bilinear symmetric form $\odot$ on 
$(\lambda_1 + \lambda_2)\cap \lambda_3$ defined by $(a_1+a_2) \odot
(b_1+b_2) = a_2 \cdot b_1.$  
(Here $a_i, b_i \in \lambda_i$ for $i=1,2$,
and $a_1+a_2, b_1+b_2\in \lambda_3$.)   
We will need the following well-known
property of $\mu(\lambda_1, \lambda_2,  \lambda_3)$.

\begin{lem}\label{2lag} The Maslov index changes sign under  an odd
  permutation of the three lagrangians. In particular, $\mu(\lambda_1,
  \lambda_2,  \lambda_3)=0$   if two of the lagrangians are the same.
 \end{lem}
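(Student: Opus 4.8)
The plan is to reduce the statement to the behaviour of $\mu$ under transpositions and then replace the defining form by a manifestly antisymmetric model. Since $S_3$ is generated by transpositions and $\sgn$ is multiplicative, it suffices to prove that $\mu$ changes sign whenever two of its three arguments are exchanged (applied to arbitrary triples, so that compositions multiply signs correctly). Granting this, the ``in particular'' assertion is immediate: if two of the $\lambda_i$ coincide, the transposition exchanging them fixes the triple while reversing the sign of $\mu$, forcing $\mu=0$. I would also note that this last point can be checked by hand in each case, since whenever two lagrangians agree the domain $(\lambda_1+\lambda_2)\cap\lambda_3$ reduces to one of the $\lambda_i$ and the decomposition defining $\odot$ can be chosen with one summand zero, whence $\odot\equiv 0$.

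Before anything else I would record that $\odot$ is well defined and symmetric. For $x=a_1+a_2$ and $y=b_1+b_2$ in $(\lambda_1+\lambda_2)\cap\lambda_3$ (with $a_i,b_i\in\lambda_i$), the vanishing $x\cdot y=0$ (as $x,y\in\lambda_3=\lambda_3^\perp$) together with $a_1\cdot b_1=0$ and $a_2\cdot b_2=0$ (as $\lambda_1,\lambda_2$ are lagrangian) yields $a_2\cdot b_1=-a_1\cdot b_2$, which gives both symmetry and independence of the chosen decompositions. The same identity settles the transposition exchanging $\lambda_1$ and $\lambda_2$, for there the underlying space $(\lambda_1+\lambda_2)\cap\lambda_3$ does not change, and the form attached to the order $(\lambda_2,\lambda_1,\lambda_3)$ sends $(x,y)$ to $a_1\cdot b_2=-a_2\cdot b_1$, i.e.\ to $-\odot(x,y)$; hence the two signatures are opposite.

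The remaining transpositions change the underlying space (for instance $(\lambda_1+\lambda_2)\cap\lambda_3$ becomes $(\lambda_2+\lambda_3)\cap\lambda_1$), so a direct comparison of signatures is awkward. To treat all permutations uniformly I would pass to the symmetric Kashiwara--Leray model: on the \emph{fixed} space $E=\lambda_1\oplus\lambda_2\oplus\lambda_3$ consider the symmetric bilinear form $B$ associated with $Q(x_1,x_2,x_3)=x_1\cdot x_2+x_2\cdot x_3+x_3\cdot x_1$. Because $\cdot$ is skew, reindexing the summands of $E$ by a permutation $\sigma$ carries $Q$ to $\sgn(\sigma)\,Q$ (a one-line check: a transposition flips the sign of the three-term cyclic expression, while a $3$-cycle preserves it). Thus $\Signature(B)$ is totally antisymmetric in the three lagrangians with no further effort, and the whole lemma reduces to the single identity
\[ \Signature(B)=\mu(\lambda_1,\lambda_2,\lambda_3). \]

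Establishing this identity is the main obstacle, and it is the one place requiring genuine bookkeeping with radicals and quotients. Concretely, I would compute the radical of $B$, which comes out to be $\{(x_1,x_2,x_3)\mid x_2-x_3\in\lambda_1,\ x_3-x_1\in\lambda_2,\ x_1-x_2\in\lambda_3\}$, and then exhibit an explicit linear isomorphism from a complement of this radical onto $(\lambda_1+\lambda_2)\cap\lambda_3$ modulo $\rad(\odot)$, verifying that it intertwines $B$ with $\odot$ with the correct sign. This is elementary symplectic linear algebra, but it is the step where care is needed; everything else is either formal ($S_3$-generation, multiplicativity of $\sgn$) or a single skew-symmetry computation.
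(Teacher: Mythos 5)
The paper itself offers no proof of this lemma---it is invoked as a well-known property of the Maslov index (available, e.g., in \cite{LV} or \cite{CLM})---so your proposal must be judged on its own terms. The parts you actually carry out are correct: the identity $a_2\cdot b_1=-a_1\cdot b_2$ does give well-definedness and symmetry of $\odot$; it does give the sign change under the transposition exchanging $\lambda_1$ and $\lambda_2$ (the one transposition that fixes the domain of $\odot$); and your case-by-case argument that $\mu=0$ when two lagrangians coincide is complete. But the lemma's main claim is antisymmetry under \emph{all} odd permutations, and for the transpositions moving $\lambda_3$ you rely entirely on the identity $\Signature(B)=\mu(\lambda_1,\lambda_2,\lambda_3)$, where $B$ is the symmetric form attached to $Q(x_1,x_2,x_3)=x_1\cdot x_2+x_2\cdot x_3+x_3\cdot x_1$ on $\lambda_1\oplus\lambda_2\oplus\lambda_3$. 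That identity is exactly the content of the lemma at this point, and you do not prove it.

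Worse, the route you sketch for it cannot be completed as described. Your computation of $\rad(B)$ is correct, but there is in general \emph{no} linear isomorphism from a complement of $\rad(B)$ onto $\bigl((\lambda_1+\lambda_2)\cap\lambda_3\bigr)/\rad(\odot)$, intertwining or not: the two nondegenerate quotients have different dimensions. For $V=\BQ^2$ with $e\cdot f=1$, $\lambda_1=\langle e\rangle$, $\lambda_2=\langle f\rangle$, $\lambda_3=\langle e+f\rangle$, the form $B$ is nondegenerate of rank $3$ (signature $-1$), while $\odot$ lives on the line $\lambda_3$ and has rank $1$ (also signature $-1$). There is also a sign trap: the natural embedding $z=a_1+a_2\mapsto(a_1,a_2,-z)$ of $(\lambda_1+\lambda_2)\cap\lambda_3$ into $\lambda_1\oplus\lambda_2\oplus\lambda_3$ pulls $Q$ back to $-3\,\odot$, so $B$ restricted to this copy has the \emph{opposite} signature to $\odot$, and the deficit must be recovered from the $B$-orthogonal complement. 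What a complete proof must establish is a Witt-cancellation statement---$B/\rad(B)$ is isometric to $\odot/\rad(\odot)$ orthogonally summed with a split (hyperbolic) form---or else avoid the comparison altogether, e.g.\ by checking that both definitions satisfy the axioms characterizing the Maslov index, as in \cite{CLM}. Either way, this is the real work of the lemma, not elementary bookkeeping, and as written your proof of the full antisymmetry statement is incomplete.
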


Recall that the first homology of a closed oriented 2-dimensional manifold $\Si$ has a skew-symmetric intersection form 
$\cdot: H_1(\Si; \BQ) \times H_1(\Si; \BQ) \rightarrow \BQ.$  By a lagrangian of $\Si$, we mean a lagrangian for  $H_1(\Si; \BQ)$ with this pairing.

 An extended surface $\Si$ is a closed oriented 2-dimensional manifold  equipped with a
   lagrangian subspace $\lambda (\Si)\subset H_1(\Si; \BQ)$. 
It is clear how to take the disjoint union of extended surfaces.

An extended 3-manifold $M$  is a compact oriented 3-dimensional manifold equipped with a weight $w(M) \in \BZ$, and whose oriented boundary $\partial M$ has been given the structure of an extended surface with a lagrangian $\lambda(\partial M)$. 
In this case $\partial
M$ also has a lagrangian  given by  $\text{kernel} (i_{*}), $ where $i:\partial M \rightarrow M$ is the inclusion. We denote this lagrangian by 
$\lambda_M(\partial M)$. We insist that $\lambda(\partial M)$ could be
chosen arbitrarily and will usually be different from
$\lambda_M(\partial M)$.

If $M$ is an extended $3$-manifold and $\Si$ is a connected component
of $\partial M$, then $\lambda(\partial M) \cap H_1(\Si;\BQ)$ may or
may not be  a lagrangian for $\Si$. If it is a lagrangian for $\Si$, 
we may equip $\Si$ with this lagrangian and we will call $\Si$, so equipped,  a boundary surface of the extended 3-manifold $M$.

Extended $3$-manifolds can be glued along boundary surfaces. To
describe this `extended' gluing, we need one more notation. First, observe that if $\Si$ is a boundary 
surface of $M$, then 
$\Sip=\partial M \setminus  \Si$ 
is also a boundary surface, 
and $\partial M$ is the disjoint union of $\Si$ and 
$\Sip$ 
as 
extended surfaces. Now let  $i_\Si$ and $i_{\Sip}$ denote the
inclusions of $\Si$ and $\Sip$ into $M$, and define $\lambda_M(\Si)$
to be $i_{\Si}^{-1} \left(i_{\Sip}(\lambda(\Sip))\right).$ In other
words, we restrict the given lagrangian $\lambda(\partial M)$ to
$\Sip$, and then `transport' it over to $\Si$, using $M$. Note that
if $\Si$ is the whole boundary of $M$, so that $\Sip = \emptyset$,
this agrees with the earlier definition of $\lambda_M(\partial
M)$. As before, we insist that $\lambda_M(\Si)$ will in general be different from
$\lambda(\Si)$. 
 
Throughout this paper, we denote orientation reversal by an
overbar. If $\Si$ is an extended surface, 
$\overline{\Si}$ 
denotes the same
surface with opposite orientation and with the same 
 lagrangian $\lambda(\overline \Si)= \lambda(\Si)$.  
 If
$M$ is an extended $3$-manifold, 
$\overline M$  
denotes the same manifold
with opposite orientation and weight 
$w(\overline {M})=- w(M)$.

We can now spell out the gluing formula. Let $M$ and $M'$ be two
extended 3-manifolds and assume that  $\Si$ is a boundary surface of
$M$ and 
$\overline \Si$ 
is a boundary surface of $M'$.
Then we
 may glue $M$ and $M'$ (by the orientation reversing identity
map from $\Si$ to 
$\overline \Si$) 
 together to form a new extended
$3$-manifold  
$M\cup_\Si M'$.   
The weight of  
$M\cup_\Si M'$ 
is defined as 
\begin{equation} 
\label{weightunion}
w(M\cup_\Si M')= w(M)+w(M') - \mu_\Si\left(\lambda_M(\Si), \lambda(\Si), \lambda_{M'}(\overline \Si)\right).\end{equation}
We write $\mu_\Si$ to indicate that this Maslov index  is to be computed using the intersection form of $\Si$, rather than $\overline \Si.$ 
We note that $\lambda_{M'}(\overline \Sigma)$ is a lagrangian for both $\Si$ and $\overline \Si$ as the notion of lagrangian does not depend on the orientation of  the surface. The minus sign in the above formula is needed to make Lemma \ref{4intrep} hold.

We would get the same number computing:
\[w(M'\cup_{\overline \Si} M)= w(M')+w(M) - \mu_{\overline{ \Si}}\left(\lambda_{M'}(\overline  \Si), \lambda(\overline \Si), \lambda_M(\Si)\right),\]
as the intersection 
pairings differ by a sign 
 but an odd permutation of the lagrangians has been  introduced. 

Thus gluing of extended manifolds is `commutative'. In other words,
it does not matter whether we think we are gluing $M$ to $M'$ or $M'$ to
$M$. Gluing is also `associative', meaning that if we have a
collection of extended 3-manifolds that we wish to glue together along
boundary surfaces, it does not matter in what order we do the gluing.  This follows from the geometric interpretation
of weights in terms of signatures of associated 4-manifolds given by Walker, as well as by the more algebraic approach given in  Turaev's book.

We now wish to define  the notion of 
extended 
surgery to an extended manifold
$M$ along a framed knot $K$ in $M$.
The resulting extended manifold will be denoted by $M_K$. Its
underlying manifold is obtained by the usual surgery procedure: we use
the framing and the orientation 
of $M$ to identify  a closed tubular
neighborhood $\nu(K)$ of $K$  with $\overline{S^1} \times D^2$; we
then cut
out the tubular neighborhood, and replace it with $D^2 \times
S^1$. (Note that $\partial (\overline{S^1} \times D^2)= S^1 \times S^1= \partial(D^2 \times
S^1)$.) Now, to make $M_K$ into an extended manifold, we do the same
thing but use extended gluing, where the extended structure is as
follows: We give $M \setminus
{\Int}(\nu(K))$ the weight of $M$, the weight of $D^2 \times S^1$  is
zero, and we equip  $S^1 \times S^1$
with the lagrangian generated by the homology class of the meridian of
the knot $K$, {\em i.e.},  ${\text{pt}} \times S^1$. We remark that
this is a natural choice for the lagrangian, as with this choice the
result of extended gluing of $M \setminus
{\Int}(\nu(K))$ with $\nu(K)$ (equipped with zero
weight) is $M$ with its original weight. (This follows from
Lemma~\ref{2lag}.)

Note that since $K$ is a knot, we have $|w(M_K)-w(M)| \leq 1$, as the
contribution from the Maslov index to the weight of $M_K$ is computed from a symmetric bilinear form on a
space of dimension at most one.
If we have a framed link $L$ in $M$, we may do a sequence of such
extended surgeries  or perform the surgeries all at once, and we would
get the same result (by the above-mentioned `associativity' of gluing). The resulting extended manifold is denoted ${M_L}$
and is called extended surgery along $L$. 

If $L$ is a framed ordered oriented link in $S^3$, let $\sigma(L)=
b_+(L)-b_-(L)$, where $b_\pm(L)$ is the number of positive (negative) eigenvalues (counted with multiplicity) of
the {\em linking  matrix} of $L$, that is, the symmetric integral matrix whose off-diagonal entries are the linking numbers
of the components of $L$, and whose diagonal entries are the framings. The number
  $\sigma(L)$ is the signature of the linking  matrix of $L$ and should
  not be confused with what is usually called the signature of the
  {\em link} $L$ in
  knot theory. Changing the order or the orientation of $L$ does
not effect $\sigma(L)$, $b_+(L)$, or $b_-(L)$.

The 4-manifold interpretation of weights \cite{W} yields the following
basic fact.

 \begin{lem}\label{4intrep} If $S^3$ is equipped with weight $w(S^3) =0$, then $w({(S^3)_L})=\sigma(L)$.
  \end{lem}

\section{The central extension $\widetilde\Gamma(\Si)$ of the mapping
  class group $\Gamma(\Si)$} 
\label{sec3}

We will realize our central extensions of the mapping class group as
subgroups of a certain extended cobordism category $\cC$. 
The objects of $\cC$ are 
 extended surfaces.  A morphism in $\cC$ from $ \Si$
to $ \Si'$ is given by an extended cobordism, that is, an extended $3$-manifold $M$ whose boundary has been
partitioned into the disjoint union of two  boundary surfaces,
 one of
which is identified with $\Si$ 
 by an orientation reversing  
diffeomorphism, and the other
is
identified with ${\Si'}$ by an orientation preserving diffeomorphism.
We denote such a cobordism by $ M: \Si \rightsquigarrow \Si'.$ 
We refer to  $\Si$ as the source and $\Si'$ as  the target of the cobordism.
If we
also have another cobordism $ M': \Si' \rightsquigarrow  \Si''$, we
can form $ M' \circ  M:  \Si \rightsquigarrow  \Si''$ by extended gluing
$ M$ to $ M'$  along $ \Si'$.  
Thus, 
$ M' \circ  M$ means {\em first $M$, then $M'$.} This convention
is needed to make formula (\ref{comp}) below hold.

Two 
extended
cobordisms from $ \Si$ to $
\Si'$ are considered equivalent  
if they have the same weight and   
if there is an orientation preserving
diffeomorphism between them which preserves their boundary
identifications.   Composition of  
extended
cobordisms is associative (on
equivalence classes). Therefore we define the morphisms of $ \cC$ from
$ \Si$ to $ \Si'$ to be  equivalence classes of 
extended
cobordisms. However,
from now on we will treat equivalent cobordisms as if they are identical. 
When it should cause no confusion, we will act as if the boundary identifications of a cobordism are identity maps.

Sometimes we will need to discuss 
extended manifolds whose extended structure we have forgotten, then we
will denote them by $\underline{M}$, $\underline \Si$ {\em etc.}  Thus, forgetting
the extended structure will be denoted by an underbar. We have a
forgetful functor $\cC \rightarrow \underline {\cC}$, where
$\underline {\cC}$ denotes the usual cobordism category, with
composition given by the usual gluing.

We now set out to define the extended mapping class group  $\widetilde
\Gamma(\Si)$ of a  closed 
oriented surface equipped with a fixed
lagrangian $\lambda(\Si)$. 
{\em Here and whenever we  discuss a mapping class group of a 
 surface in this paper, we
assume that the surface is connected.}
First of all, we denote by $\Gamma(\Si)$ the ordinary mapping class group
of the underlying surface $\underline{\Si}$. (The group $\Gamma(\Si)$
should 
perhaps be
denoted by $\Gamma({\underline \Si})$, but we find this notation too
clumsy.) Thus,   $\Gamma(\Si)$  is
the group of isotopy classes of
orientation-preserving diffeomorphisms of $\underline{\Si}$. 
Abusing
notation, we will write $f$ for a diffeomorphism, and its isotopy
class. 

 If $f\in \Gamma(\Si)$ and $n \in \BZ$,  we let  ${C}(f,n)$ denote  the extended cobordism given by
 the mapping cylinder of $f$  with weight $n$,  where  both the source and
 target are 
$\Si$ 
equipped with the lagrangian $\lambda(\Si)$. We call
 ${C}(f,n)$ an extended mapping cylinder. It is a morphism of $\cC$. Its underlying cobordism is
 the usual mapping cylinder 
of $f$, that is, the
 cobordism formed from 
$ \I \times \underline{\Si} $ by  identifying  
$\overline{\{0\} \times \underline \Si} $ 
with 
 the 
  source
  surface 
$\underline{\Si}$ 
via the identity (which is in this case is orientation reversing)
and identifying 
$\{1\} \times \underline \Si$ 
with 
the 
target 
surface
 $\underline \Si$ via  $f$.

It follows from (\ref{weightunion}) that  
composition of extended mapping cylinders is given by 
  \begin{align} 
  {C}(g,n) \circ  {C}( f,m)&=  
  {C}(g \circ f, n+m- \mu\left(  
  f_* \lambda(\Si), \lambda(\Si), g_*^{-1}\lambda(\Si)
  \right) )
  \notag \\ \label{comp} 
  &=  {C}(g \circ f, n+m+ \mu\left( 
  \lambda(\Si),g _* \lambda(\Si),(g \circ f)_* \lambda(\Si)
  \right) ) 
  \end{align}

\begin{de}\label{comp3} (Walker) The extended mapping class group is   \begin{equation}
 \widetilde \Gamma(\Si)= \{{C}(f,n)
\, | \,
f \in \Gamma(
\Si
), \ n \in \BZ \}
 \notag
\end{equation} with multiplication given by (\ref{comp}). \end{de}

We have a short exact sequence of groups (see Remark~\ref{comp4} below):
\[
 \begin{CD}
0 @>>> \BZ @>>> \widetilde \Gamma(\Si)  @>>>  \Gamma({\Si} ) @>>>  
1    .
\end{CD} 
\]  

The map $\widetilde \Gamma(\Si)  \rightarrow
\Gamma({  \Si})$  is given by ${C}(f,n) \mapsto f$. This is a central extension. The
kernel is generated by $C( \Id_\Si,1) \in  \widetilde \Gamma(\Si)$. We
denote this central generator by $W$.

\begin{rem}\label{comp4}{\em  In Definition~\ref{comp3}, we
realize $\widetilde\Gamma({\Si})$  as a subset of the endomorphisms
of ${\Si}$ in the extended cobordism  category $
{\cC}$. But notice that the extended mapping cylinder ${C}(f,n)$
(which we view as an equivalence class of  morphisms in ${\cC}$)
determines $(f,n)\in \Gamma({  \Si})\times \BZ$, because of
the following fact: One has that $f=g$ in $\Gamma({  \Si})$ if and only if
the (ordinary) mapping cylinders of $f$ and $g$ 
are equivalent as morphisms
of $\underline \cC$. (For the `if' part, one can use a result of Baer
\cite[Theorem(1.9)]{FM}.) In later sections, we will therefore think of
$\widetilde \Gamma(\Si)$ as the set of pairs $(f,n)\in
\Gamma({  \Si})\times \BZ$ with multiplication given by
(\ref{comp}). But 
for now, 
it will be convenient
to think of elements of $\widetilde \Gamma(\Si)$ as extended mapping cylinders.
}\end{rem}

\begin{rem}\label{uni}{\em   The multiplication in (\ref{comp}) depends on
   $\lambda(\Si)$. Nevertheless, if $\Si$ and $\Si'$
   have the same underlying surface $\underline
   \Si=\underline\Si'$, then   $\widetilde\Gamma(\Si)$ and $\widetilde
   \Gamma({\Si'})$ are canonically isomorphic. The isomorphism is given by conjugating by $\I \times \underline \Si$ with
identity boundary identifications, but with the source and target
being respectively $\Si$ and $\Si'$.
}\end{rem}

\section{A surgery formula}\label{newsec3}

 Recall that the 
mapping class group  $\Gamma(  \Si)$ is generated by Dehn
twists.   If $ \alpha$ is 
an unoriented
simple closed curve in $\Si$, let 
$D({\alpha})$ denote the Dehn twist along $\alpha$. Our Dehn twists
are defined as in Birman \cite{Bi} ({\em i.e.,} they `turn right');
this is the opposite convention from the one in \cite{FM}.  Let
$\alpha_-$ denote the framed knot in $\I \times \Si$ given by $\frac 1
2 \times \alpha $  with framing $-1$ with respect to the `surface
framing' that this knot has  as a subset of the surface $ \frac 1 2
\times \Sigma$.

\begin{lem}\label{1twist} 
Let $\Si$ be an extended surface with lagrangian
$\lambda=\lambda(\Si)$. Let $ \alpha$ be a simple closed curve in $\Si$. Let $C({\alpha})\in \widetilde\Gamma(\Si)$ be
the result of extended surgery along the framed knot $\alpha_-$ on the identity
cobordism $\I \times \Si$ 
(with
 weight $w(\I \times \Si)=0$, and both ends 
equipped with 
$\lambda(\Si)$.) Then  

(i) the underlying cobordism is the mapping cylinder of the Dehn 
twist $D(\alpha)$.

(ii) Moreover, the weight of $C({\alpha})$ is given by 
\begin{equation} \label{wf}
w( C({\alpha})) =
\begin{cases}
-1 & \text{if \ }[\alpha] \in \lambda(\Si)\\
0 & \text{if \ } [\alpha] \notin \lambda(\Si)
\end{cases} 
\end{equation}  \end{lem}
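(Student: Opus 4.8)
The plan is to analyze the two assertions separately, since part (i) is a purely topological statement about underlying manifolds (which one can verify after applying the forgetful functor $\cC \to \underline{\cC}$), while part (ii) is the computation of the weight, which requires tracking the lagrangians through the extended surgery formula.

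For part (i), I would argue that the underlying $3$-manifold $\underline{C(\alpha)}$ is obtained from $\I \times \underline{\Si}$ by ordinary surgery along the framed knot $\alpha_-$, which sits as $\frac{1}{2}\times \alpha$ with surface framing $-1$. This is a standard fact: performing $-1$-surgery on a curve pushed into the interior of a product cobordism, where the curve is a parallel copy of a simple closed curve $\alpha$ on the surface, yields the mapping cylinder of the Dehn twist $D(\alpha)$. The sign of the framing ($-1$) matches the `turn right' convention of Birman cited just before the lemma. Concretely, one cuts out a neighborhood of $\alpha_-$ and reglues; the effect on the boundary surface $\{1\}\times \Si$ is precisely a right-handed Dehn twist along $\alpha$. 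I would either cite this as classical or sketch the reglueing diffeomorphism, so this part should not present serious difficulty.

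For part (ii), the weight of $C(\alpha)$ is computed from the extended surgery formula. By the construction of extended surgery described in Section~\ref{sec2}, the weight is $w(\I\times\Si) = 0$ corrected by minus the Maslov index term arising from gluing $D^2\times S^1$ back in, where the gluing torus $S^1\times S^1$ carries the lagrangian generated by the meridian of $\alpha_-$. By Lemma~\ref{2lag} (the Maslov index vanishes when two lagrangians coincide, and the meridian-generated lagrangian interacts with the longitude/framing data), the relevant Maslov index is computed on a space of dimension at most one, so the weight change is $0$ or $-1$. I would then identify precisely when the $-1$ occurs: tracing through $\lambda_M(\Si)$, the chosen lagrangian $\lambda(\Si)$, and the meridian lagrangian, the non-vanishing of the Maslov index should correspond exactly to whether the framing longitude, i.e.\ the class $[\alpha]$, lies in $\lambda(\Si)$ or not.

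The main obstacle will be the bookkeeping in part (ii): one must carefully identify the three lagrangians entering the Maslov index $\mu_\Si$ in the gluing formula (\ref{weightunion}), compute the symmetric form $\odot$ on the at-most-one-dimensional space $(\lambda_1+\lambda_2)\cap\lambda_3$, and determine its signature. The dichotomy in (\ref{wf}) hinges on whether $[\alpha]\in\lambda(\Si)$; when $[\alpha]\in\lambda(\Si)$ the relevant intersection space is one-dimensional and the form is negative definite (giving $-1$), whereas when $[\alpha]\notin\lambda(\Si)$ the intersection space degenerates to zero (giving $0$). Getting the sign right — confirming it is $-1$ rather than $+1$ — is the delicate point, and I expect it to follow from the sign conventions already fixed in (\ref{weightunion}) (where the minus sign was inserted precisely to make Lemma~\ref{4intrep} hold) together with the $-1$ framing on $\alpha_-$.
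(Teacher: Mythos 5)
Your treatment of part (i) matches the paper's (which simply quotes it as well-known, citing \cite{MR}), and your overall strategy for part (ii) --- apply the gluing formula (\ref{weightunion}) to the extended surgery and evaluate a Maslov index on the surgery torus --- is exactly the route the paper takes. But there is a genuine gap at the center of your plan. The Maslov index is computed on the torus $T=\partial\nu(\alpha_0)$, and the three lagrangians in $H_1(T;\BQ)$ entering (\ref{weightunion}) are: the lagrangian $L$ obtained by transporting $0\times\lambda+1\times\lambda$ through the knot exterior $E$; the meridian lagrangian $\langle m\rangle$, which \emph{is} the lagrangian chosen on the gluing torus in the definition of extended surgery; and the lagrangian $\langle p-m\rangle$ of classes bounding in the glued-in $D^2\times S^1$, which is where the $-1$ framing enters. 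Your list (``$\lambda_M(\Si)$, the chosen lagrangian $\lambda(\Si)$, and the meridian lagrangian'') double-counts the meridian, omits the framing lagrangian, and --- more importantly --- you never say how to compute the transported lagrangian $L$, which is the entire content of the proof. One must show that $L=\langle p\rangle$ when $[\alpha]\in\lambda$ (because $p$ can be isotoped in $E$ to $1\times\alpha$), whereas $L=\langle m\rangle$ when $[\alpha]\notin\lambda$; the second case is not bookkeeping but a geometric argument: choose a primitive $x\in\lambda\cap H_1(\Si;\BZ)$ with $x\cdot[\alpha]\neq 0$, represent it by a curve $\gamma$ transverse to $\alpha$, and puncture the cylinder $\I\times\gamma$ at its intersections with $\alpha_0$ to exhibit a homology in $E$ from a nonzero multiple of $m$ to an element of $0\times\lambda+1\times\lambda$. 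The dichotomy you assert (one-dimensional intersection space versus zero) is a \emph{consequence} of this identification of $L$, not something that falls out of ``tracing through'' the definitions; without it the Maslov index cannot be evaluated in either case.

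There is also a sign problem in your sketch. With the paper's conventions ($T$ oriented as part of $\partial E$, so $m\cdot p=-1$), in the case $[\alpha]\in\lambda$ the triple $\langle p\rangle,\langle m\rangle,\langle p-m\rangle$ yields a \emph{positive} definite form: the Maslov index is $\Signature[(-m)\cdot p]=+1$, and the weight $-1$ then comes from the minus sign in (\ref{weightunion}). Your claim that the form is negative definite, read together with your implicit identification of the weight with the Maslov index itself, lands on $-1$ only through two cancelling sign errors; applied correctly, a negative definite form would give weight $+1$, contradicting (\ref{wf}). Since the paper's whole mod-$4$ analysis hinges on such signs, this step has to be pinned down rather than deferred to ``sign conventions''.
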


 Here, $[\alpha] \in H_1(\Si;\BQ)$ denotes the homology class of
 $\alpha$ with an arbitrary orientation. Note that in the formulae
 above, replacing  $[\alpha]$ by $-[\alpha]$ has no effect.
\begin{proof} Statement (i) of the lemma is well-known, see {\em
    e.g. \cite{MR}}. Statement (ii) can be deduced from our more general
  surgery formula in Theorem~\ref{new4.1} below (see Remark~\ref{remcol}), but it can also be seen directly by the following
  Maslov index computation which was suggested to us by the referee.  Let
  $\alpha_0$ be the framed knot in $\I \times \Si$ given by $\frac 1
2 \times \alpha $  with the `surface
framing'. Let $E$ denote the exterior of a regular neighborhood
$\nu(\alpha_0)$  of
$\alpha_0$ in $\I\times\Si$. Its boundary $\partial E$ is the disjoint
union of $\partial \I\times
\Si$ and the torus $T=\partial(\nu(\alpha_0))$. The meridian of $\alpha_0$ and the
preferred parallel 
(=longitude) 
 of $\alpha_0$ defined by its framing are denoted by
$m(\alpha_0)\subset T$ and $p(\alpha_0)\subset T$
respectively.
We choose our meridian and 
 preferred parallel
so that $m(\alpha_0)\cdot p(\alpha_0)=1$, if $T$ is oriented
as the boundary of $\nu(\alpha_0).$  
 Then $$C(\alpha)=E\cup_f(D^2\times S^1)$$
where $f:S^1\times S^1\rightarrow \partial E$ is an
orientation-reversing homeomorphism sending $S^1 \times pt$ to
$p(\alpha_0) -m(\alpha_0)$ in homology. By
definition of extended surgery, we have 
\begin{equation} 
\notag
w(C(\alpha)) = 
0+0-\mu_{T}(L,\langle m(\alpha_0)\rangle, 
\langle p(\alpha_0) -m(\alpha_0)\rangle)\\
\end{equation}
where $\mu_{T}$ is Maslov index  and $L$ is the lagrangian in $H_1(T; \BQ)$ given by
those elements of $H_1(T; \BQ)$ which are homologous in $E$ to
some element of $0\times \lambda + 1\times \lambda$, where
$\lambda=\lambda(\Si)$. 
 If $\alpha$ belongs to $\lambda$, then $L=\langle p(\alpha_0)\rangle$
(since $p(\alpha_0)$ can be isotoped in $E$ to $1\times \alpha)$; a
simple computation straight from the definition of Maslov index gives 
\begin{equation}
\notag
\mu_{T}(L,\langle m(\alpha_0)\rangle,\langle p(\alpha_0) -m(\alpha_0)\rangle
) =
\Signature
[ (-m(\alpha_0)) \cdot p(\alpha_0)]= 1
\end{equation} in this first case. 
For this computation, $T$ is oriented as part of the boundary of $E$, and thus 
$m(\alpha_0) \cdot p(\alpha_0)=-1$.
 If, on the other hand, $\alpha$
does not belong to $\lambda$, we claim that $L=\langle
m(\alpha_0)\rangle$; assuming this for the moment, it follows that $$\mu_{T}(L,\langle m(\alpha_0)\rangle,\langle p(\alpha_0) -m(\alpha_0)\rangle
) = 0$$ in this second case (since two of
the three lagrangians are now the same, see Lemma~\ref{2lag}).  

To see that $L=\langle
m(\alpha_0)\rangle$ if $\alpha$
does not belong to $\lambda$, choose  
$x\in \lambda\cap H_1(\Si; \BZ)$
so that
$x\cdot \alpha\neq 0$
and $x$ is primitive.
We have that $x$
is represented by a simple closed curve $\gamma \subset \Si$, which we
may assume transverse to $\alpha$. Then $\I\times \gamma$ meets $\frac 1
2 \times \alpha $ non-trivially; cutting out from $\I\times \gamma$
small disks around the intersection points provides a surface
realizing a homology from a non-zero multiple of $m(\alpha_0)$ to some
element of $0\times \lambda + 1\times \lambda$. This shows that
$m(\alpha_0)$ lies in $L$, as asserted.
\end{proof}

Consider a word $\mathfrak
w=\prod_{i=1}^n {\alpha_i}^{\varepsilon_i}$, where $\varepsilon_i=
\pm 1$, and the $\alpha_i$ are unoriented simple closed curves in
$\Si$.
Let $D({\mathfrak w} )= \prod_{i=1}^n D(\alpha_i)^{\varepsilon_i} \in \Gamma
( {\Si})$. (Here $D(\alpha_1\alpha_2)=D(\alpha_1)\circ D(\alpha_2)$
means first apply $D(\alpha_2)$ then $D(\alpha_1)$.) Since Dehn twists generate
$\Gamma( {\Si})$, every mapping class $f$ is of the form
$D({\mathfrak w} )$ for some word $\mathfrak
w$. We now give a surgery formula for the product $$ C(\mathfrak w)= \prod_{i=1}^n
C({\alpha_i})^{\varepsilon_i}$$ in the extended mapping class group.
 Here, the product structure is composition of mapping cylinders as
defined in (\ref{comp}).

\begin{thm}\label{new4.1} If $f=D(\mathfrak w)$, then 
\begin{equation}
\notag
C(\mathfrak w)= C(f, n^0_\lambda(\mathfrak
w))~,
\end{equation} 
where 
 $n^0_\lambda(\mathfrak w)=\sigma(L^0_\lambda(\mathfrak
w)),$ the signature  
of the linking matrix of the framed link  $L^0_\lambda(\mathfrak
w)$ in $S^3$ which is constructed below. 
\end{thm}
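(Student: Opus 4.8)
The plan is to realize $C(\mathfrak w)$ as a single extended surgery and then identify its weight with a signature. First, recall from Lemma~\ref{1twist} that $C(\alpha_i)$ is extended surgery on the framed knot $(\alpha_i)_-$ in $\I\times\Si$; the same reasoning shows that $C(\alpha_i)^{-1}$ is extended surgery on the knot $\frac12\times\alpha_i$ with surface framing $+1$, since changing the surface framing from $-1$ to $+1$ inverts the resulting Dehn twist. Because composition of mapping cylinders is the stacking of intervals, and stacking $n$ copies of $\I\times\Si$ yields $\I\times\Si$ again, the associativity of extended surgery lets me write $C(\mathfrak w)$ as a single extended surgery on the framed link $L\subset\I\times\Si$ whose $i$-th component is $\frac{i}{n+1}\times\alpha_i$ with surface framing $-\varepsilon_i$, both ends carrying the lagrangian $\lambda$. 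By Lemma~\ref{1twist}(i) and the functoriality of mapping cylinders under composition, the underlying cobordism of $C(\mathfrak w)$ is the mapping cylinder of $f=D(\mathfrak w)$; hence $C(\mathfrak w)=C\bigl(f,w(C(\mathfrak w))\bigr)$, and only the weight remains to be computed.

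To compute the weight I would invoke the $4$-manifold interpretation of weights \cite{W}, exactly as in Lemma~\ref{4intrep}. Attaching $2$-handles along the components of $L$ with their framings produces a $4$-manifold whose signature computes $w(C(\mathfrak w))$; equivalently, the extended-surgery weight formula (\ref{weightunion}) expresses $w(C(\mathfrak w))$ as the signature of a symmetric bilinear form, a ``generalized linking matrix'' of $L$, whose entries are governed by the relative heights of the components, the intersection numbers $\alpha_i\cdot\alpha_j$ on $\Si$, the framings $-\varepsilon_i$, and the lagrangian data at the two ends. The content of the theorem is that this form is literally the linking matrix of an honest framed link in $S^3$.

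To exhibit such a link I would use the lagrangian to embed the picture into $S^3$. Choosing a basis of $\lambda$ represented by disjoint simple closed curves realizes $\Si$ as the boundary of a handlebody $H_\lambda$ with $\lambda=\ker\bigl(H_1(\Si;\BQ)\to H_1(H_\lambda;\BQ)\bigr)$, and $H_\lambda$ embeds standardly in $S^3$. Capping the ends of $\I\times\Si$ with copies of $H_\lambda$ carries $L$ to a framed link $L^0_\lambda(\mathfrak w)\subset S^3$: the off-diagonal entries of its linking matrix are the $S^3$-linking numbers, read off from the $\alpha_i\cdot\alpha_j$ together with the height ordering and the signs $\varepsilon_i$, while the diagonal entries are the surface framings $-\varepsilon_i$ corrected by the writhe produced by the embedding. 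Once the generalized linking matrix of $L$ is shown to equal the ordinary linking matrix of $L^0_\lambda(\mathfrak w)$, Lemma~\ref{4intrep} gives $w(C(\mathfrak w))=\sigma(L^0_\lambda(\mathfrak w))=n^0_\lambda(\mathfrak w)$, as required.

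The hard part is this last identification: checking that the abstract weight form of the extended surgery coincides, signs included, with the linking matrix of the concrete link $L^0_\lambda(\mathfrak w)$. This is where the hypothesis that both ends carry $\lambda$ enters decisively, through the capping handlebodies, and where the Maslov-index sign conventions of (\ref{weightunion}) must be matched against the orientation conventions for linking numbers and framings in $S^3$. A good consistency check on the construction is the single-twist case $\mathfrak w=\alpha$, where $L^0_\lambda(\alpha)$ must recover formula (\ref{wf}) of Lemma~\ref{1twist}: when $[\alpha]\in\lambda$ the curve $\alpha$ is $\lambda$-orthogonal to all basis curves and so appears as a $(-1)$-framed unknot unlinked from them, giving signature $-1$; when $[\alpha]\notin\lambda$ it links some basis curve, producing a hyperbolic block of signature $0$.
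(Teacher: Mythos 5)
Your first step (realizing $C(\mathfrak w)$ as a single extended surgery along $L(\mathfrak w)\subset \I\times\Si$ via associativity of extended gluing) agrees with the paper, but your construction of the link in $S^3$ is wrong, and the error is fatal. The link whose linking-matrix signature computes $w(C(\mathfrak w))$ is \emph{not} just the image of $L(\mathfrak w)$ under an embedding of $\I\times\Si$ into $S^3$: the paper's $L^0_\lambda(\mathfrak w)$ has $n+g$ components, the extra $g$ components being a zero-framed unlink $U$ obtained by pushing the meridians of $\BH$ into the complementary handlebody $\BH'$. These extra components are essential. Take $\mathfrak w=\alpha$ a single curve with $[\alpha]=\ell_1\notin\lambda$: your link is the single knot $\alpha_-$, whose framing in $S^3$ is $-1+\sum_i a_ib_i=-1$, so the signature of its linking matrix is $-1$; but $w(C(\alpha))=0$ by Lemma~\ref{1twist}. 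With $U$ included, the linking matrix acquires the off-diagonal entries $b_i$, producing exactly the hyperbolic block that makes the signature $0$. Your own consistency check betrays the problem: you say that when $[\alpha]\notin\lambda$ the curve ``links some basis curve, producing a hyperbolic block'' --- but the link you constructed has no basis-curve components; those are precisely the components of $U$ that you omitted. (Note also that capping \emph{both} ends of $\I\times\Si$ with copies of $H_\lambda$, as you propose, does not give $S^3$ but $\#^g\, S^1\times S^2$, where linking numbers of the homologically nontrivial components are not even defined.)

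The reason $U$ must appear is visible in the paper's proof, and it also shows why your middle step (``the weight equals the signature of a generalized linking matrix of $L$'') cannot simply be asserted. To kill the Maslov corrections when capping off, one must glue on handlebodies inducing the lagrangian $\lambda$ at \emph{both} ends, i.e., form $Y=\BH\cup C(\mathfrak w)\cup\overline\BH$; then $w(Y)=w(C(\mathfrak w))$ by Lemma~\ref{2lag}. But $\BH\cup\overline\BH$ is $\#^g\, S^1\times S^2$, not $S^3$, so Lemma~\ref{4intrep} does not yet apply. The paper's key observation is that $\overline\BH$ equals $(\BH')_U$ \emph{as extended manifolds} (both $\BH\cup\overline\BH$ and $\BH\cup(\BH')_U=(S^3)_U$ are $\#^g\, S^1\times S^2$ with weight zero), so that $Y$ becomes extended surgery on $S^3$ along $L(\mathfrak w)\cup U=L^0_\lambda(\mathfrak w)$, and Lemma~\ref{4intrep} gives $w(Y)=\sigma(L^0_\lambda(\mathfrak w))$. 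If instead one caps with $\BH$ and $\BH'$ so as to land in $S^3$ directly --- the only way to make your $n$-component link appear --- then the gluing at the $\BH'$ end contributes a Maslov index $\mu(f_*\la,\la,\la')$ (where $\la'$ is the lagrangian of $\BH'$) which depends on $f$ and does not vanish in general; this correction is exactly what the unlink components encode, and it is zero only when $D(\mathfrak w)=\Id_\Si$, which is the content of Lemma~\ref{relator}, not of Theorem~\ref{new4.1}.
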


The framed link  $L^0_\lambda(\mathfrak
w)$ is not uniquely determined by the word $\mathfrak w$ and the
lagrangian $\lambda$, but 
 the signature of its linking matrix
is. We construct  $L^0_\lambda(\mathfrak
w)$ in three steps. First, we  embed $\Si$ in $S^3$ so that it is
the boundary of a handlebody $\BH$ in $S^3$ such that $\lambda(\Si)=
\text{kernel }(H_1(\Si;\BQ) \rightarrow H_1(\BH;\BQ))$ and such that
the complement $S^3 \setminus
{\Int}(\BH)$ is another handlebody $\BH'$. If these conditions are
satisfied, we say that  $\Si$ is 
{\em  well placed} 
in $S^3$
with respect to $\lambda$. 
 
The second step is to decompose  $S^3 =\BH
  \cup (\I \times \Si) \cup \BH'$ where $\I \times \Si$ is a collar on
  the boundary, and to construct a framed link  $L(\mathfrak w)$ lying in $\I \times \Si \subset
S^3$. This is done,  as in  \cite[2.7]{MR}, by layering 
$-\varepsilon_i$-framed (with respect to the surface framing)
copies of $\alpha_i$, 
starting with 
$\alpha_n$
near $\{0\} \times \Si$, 
then $\alpha_{n-1}$ 
 and so on, moving outward until 
$\alpha_{1}$ is inserted near $\{1\}
\times \Si$.~\footnote{The reason for inserting the $\alpha_i$ in this
  order is that the composition of mapping cylinders
 {\em first $\underline C(f)$, then $\underline C(g)$}
 is $\underline C(g\circ f)$.} 
(Here, the orientation of the individual link
components is chosen arbitrarily. It will not play a 
role 
in what
follows.)

Finally, for the third step, let  $g$ denote the genus of $\Si$.
Choose simple closed oriented curves $m_1,\ldots m_g$, $\ell_1,\ldots
\ell_g$ such that each $m_i \cap \ell_i$  consists of  
one transverse intersection point
(and $m_i
\cdot \ell_i=1$ for the given orientation of $\Si$) but the $m_i$ and $\ell_j$ are otherwise disjoint.
Moreover the 
$m_i$ should bound disjoint disks in $\BH$, and 
the $\ell_j$ should bound disjoint disks in $\BH'= S^3 \setminus \Int
\BH.$ We refer to the $m_i$ as the meridians of $\BH$.  See
Figure~\ref{ZZZ}.

\begin{figure}[h]
\includegraphics[height=1.2in]{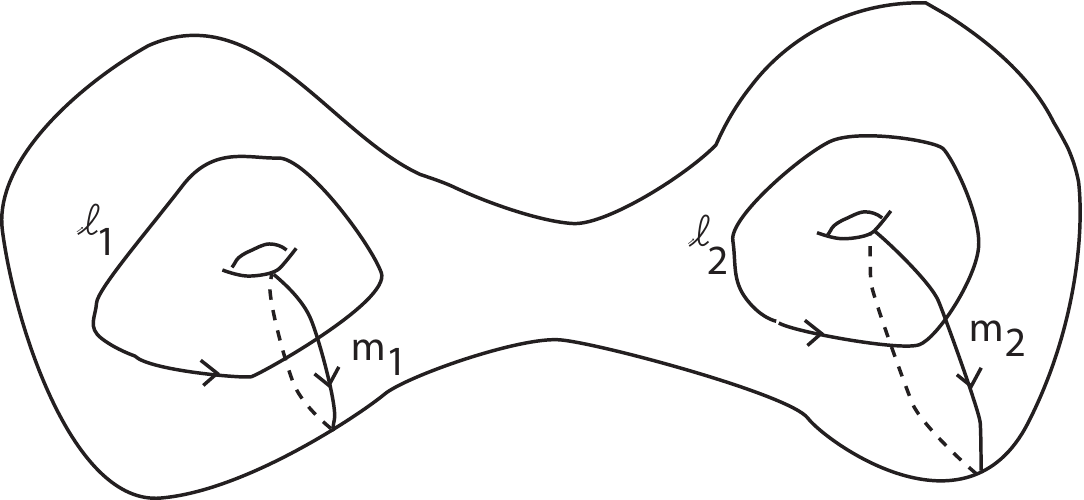}
\caption{ $m_1$, $\ell_1$, $m_2$, $\ell_2$ on  $\Si$ of genus two. The lagrangian $\lambda(\Si)$ is spanned by $m_1$ and $m_2$.}
\label{ZZZ}
\end{figure}

Consider the zero-framed  unlink $U$ with $g$ components obtained by
pushing the meridians  $m_1,\ldots m_g$ of $\BH$ up into  $\BH'$ in
$S^3$. 

\begin{de} We let $L^0_\lambda(\mathfrak w)$ be the $(n+g)$-component
  link in $S^3$ whose first
$n$ components are $L(\mathfrak w)$  sitting in $\I \times \Si \subset
S^3$, and whose  later components are given by the zero-framed
unlink $U$ sitting in $\BH'$.

\end{de}

\begin{proof}[Proof of Theorem~\ref{new4.1}]
 Observe that $ C(\mathfrak w)$ is the result
  of extended surgery on  $\I \times \Si$ along $L(\mathfrak w)$.
This follows from the associativity of extended gluing.
We need to show that the
  weight of $C(\mathfrak w)$ is equal to the signature of 
the linking matrix of 
  $L^0_\lambda(\mathfrak w)$:
\begin{equation}\label{weightC}
w(C(\mathfrak w)) = \sigma(L^0_\lambda(\mathfrak w))~.
\end{equation}

Consider the decomposition $S^3 =\BH
  \cup (\I \times \Si) \cup \BH'$. Make $\BH$ and $\BH'$ into extended
  manifolds by giving them weight zero. 
Let $Y$ be the result of extended gluing $\BH \cup C(\mathfrak w) \cup \overline
\BH $, 
where the source surface of the extended mapping cylinder $C(\mathfrak w)$ is glued to the boundary of
$\BH$, and the target surface of  $C(\mathfrak w)$ is glued to the boundary
of $\overline \BH $. 
 Since $w(\BH)=0$, 
we also have $w(\overline\BH)=0$, and hence 
\begin{align} \notag
w(Y)&= w(\BH) + w(C(\mathfrak w)) +w(\overline \BH) + \mu(\la,\la, D(\mathfrak w)_\star^{-1}(\la))
+ \mu(D(\mathfrak w)_\star(\la), \la, \la) \\
&=w(C(\mathfrak w))~.\notag
\end{align} Here, the two
Maslov index terms are zero, because in both cases two of the three
lagrangians coincide (see Lemma~\ref{2lag}). 

Let $(\BH')_U$ denote the result of extended surgery
on $\BH'$ along the zero-framed unlink $U$.
Then extended gluing $\BH \cup
(\BH')_U$ gives $(S^3)_U$, which is ${\#}^g S^1 \times
S^2$ (the connected sum of $g$ copies of $S^1 \times
S^2$)  with weight zero (use Lemma~\ref{4intrep} for the weight
computation). On the other hand, 
 extended gluing
$\BH \cup \overline \BH$ is also ${\#}^g S^1 \times S^2$ with weight zero, as follows from a Maslov index computation 
like 
the one 
for $w(Y)$
given above. This shows that the 
standard identification of $(\BH')_U$ with $\overline \BH$ holds true as
extended manifolds. Thus 
\begin{align}\notag w( Y)&=w(\BH \cup C(\mathfrak w) \cup \overline
\BH) =  w(\BH \cup C(\mathfrak w) \cup 
(\BH')_U) = w((S^3)_{L^0_\lambda(\mathfrak w)} )\\ 
&=\sigma(L^0_\lambda(\mathfrak w)) \notag 
\end{align}  
where we have again used Lemma~\ref{4intrep} in the last equality.
This proves the equality (\ref{weightC}), since both of its sides are
equal to $w(Y)$.
\end{proof}

\begin{rem}\label{remcol} {\em 
If the word $\mathfrak w$ has length $n=1$, say $\mathfrak w = \alpha$,
  then $L(\mathfrak w)$ is the framed knot $\alpha_-$, and the
  signature of 
the linking matrix of 
  ${L^0_\lambda(\mathfrak w)=L^0_\lambda(\alpha)}$ is easily computed,
  as follows. Suppose the homology class $[\alpha]= \sum_{i=1}^{g} (a_i [m_i] + b_i
[\ell_i])$, with integers $a_i$ and $b_i$ ($i=1,\ldots, g$). (Here, we have picked
an arbitrary orientation of the curve $\alpha$.) Let
$\alpha'$ be a parallel copy of $\alpha$ on one of the layers  $\{t\}
\times \Si$ (for $t\not= \frac 1 2$). Then the linking number $\Lk(\alpha, \alpha')=  \sum_i
a_i b_i.$ Thus the framing of the first component of ${L^0_\lambda(\alpha)}$ is  $-1 +   \sum_i a_i b_i.$
The linking number of the first component of ${L^0_\lambda(\alpha)}$ with the
$(i+1)$th component is $b_i$. The lower right $g \times g$
block of the linking matrix of ${L^0_\lambda(\alpha)}$ consists of zeros. Note
that by construction, the lagrangian $\lambda$ is the span of the
meridians 
$m_i$. If $[\alpha] \in \lambda$, then all the $b_i$'s are zero, and
$\sigma({L^0_\lambda(\alpha)})=-1.$ If $[\alpha] \notin \lambda$, then some
$b_i \ne 0$, and $\sigma({L^0_\lambda(\alpha)})=0.$ This computation
together with  Theorem~\ref{new4.1} provide another proof of Formula
(\ref{wf})  for the weight $w(C(\alpha))$ in Lemma~\ref{1twist}.

See
Figures~\ref{ZZZ1} and~\ref{ZZZ2} for a concrete example. 
}\end{rem} 

 \begin{figure}[h]
\includegraphics[height=1.2in]{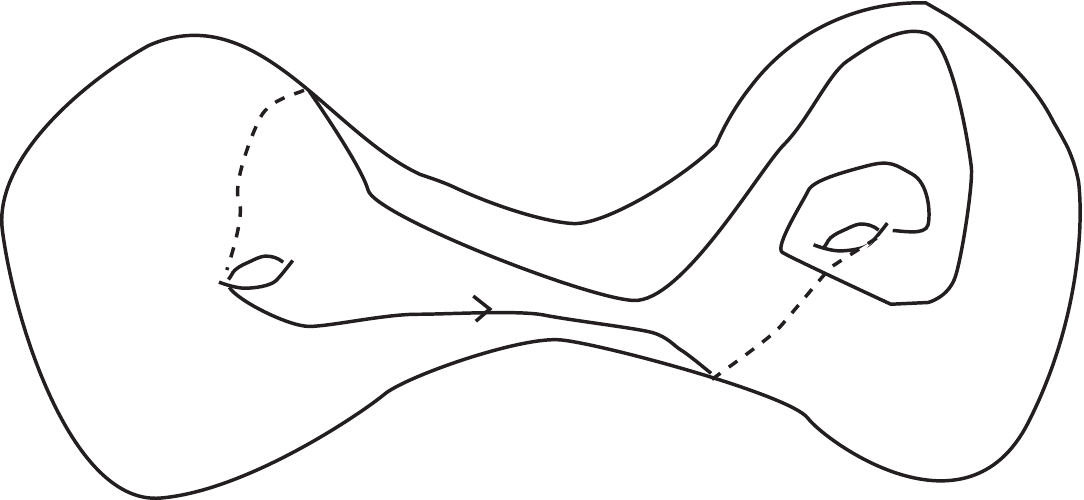}
\caption{A curve $\alpha$ with 
      $[\alpha]=  m_1+\ell_1+m_2 +2\ell_2$.
One has $\Lk(\alpha, \alpha')=3$, so that the framing specified by the surface is the
  `$3$-framing' in this case.}
\label{ZZZ1}
\end{figure} 

 \begin{figure}[h]
\includegraphics[height=1.2in]{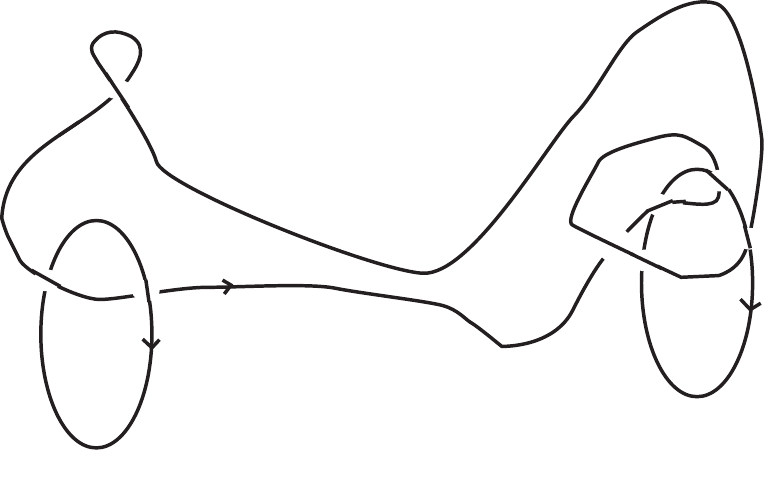}
\caption{The framed link $L^0_\lambda(\alpha)$ indicated with the `blackboard
  framing' convention. The framing of $\alpha_-$ is $2$.
  One has $\sigma({L^0_\lambda(\alpha)})=0$.}
\label{ZZZ2}
\end{figure}

\section{Definition of the subgroup $\widetilde\Gamma(\Si)^{++}$}\label{newsec5}
Recall that for every Dehn twist $D(\alpha)$, we defined a
preferred lift $C(\alpha)$ to $\widetilde\Gamma(\Si)$. Shifting the
weight by one, we define $$W({\alpha})= W \circ C({\alpha})$$ (recall $W=C(\Id_\Si,1)$). 
By Lemma~\ref{1twist}, we have
\begin{equation}\label{41} w( W({\alpha})) =
\begin{cases}
0 & \text{if \ } [\alpha] \in \lambda(\Si)\\
1 & \text{if \ } [\alpha] \notin \lambda(\Si)
\end{cases} 
\end{equation}
Note that if the curve $\alpha$ bounds a disk, then $C(\alpha)=W^{-1}$
but $W(\alpha)=1$ is the identity element of
$\widetilde\Gamma(\Si)$. 
\begin{de}\label{ind4} The group $\widetilde\Gamma(\Si)^{++}$ is defined to be the
  subgroup of  $\widetilde\Gamma(\Si)$ 
   generated 
   by the lifts $W({\alpha})$ 
  for all (isotopy classes of) simple closed curves $\alpha$ on
  $ {\Si}$. 
\end{de}   

The reason for the superscript $++$ in the notation
$\widetilde\Gamma(\Si)^{++}$ will become clear later (see
Remark~\ref{new76}). 

Given a word  $\mathfrak
w=\prod_{i=1}^n {\alpha_i}^{\varepsilon_i}$, we denote its exponent
sum by $e(\mathfrak w)=
\sum_{i=1}^n {\varepsilon_i}$, and we write $W(\mathfrak
w)= \prod_{i=1}^n
W({\alpha_i})^{\varepsilon_i}$. We have the following immediate
corollary of Theorem~\ref{new4.1}.

\begin{cor} \label{new52}If $f=D(\mathfrak w)$, then
\begin{equation} 
\notag
W(\mathfrak w)= C(f, n_\lambda(\mathfrak
w))~,
\end{equation} 
where $n_\lambda(\mathfrak w)= e(\mathfrak w) + n^0_\lambda(\mathfrak
w)= e(\mathfrak w) +\sigma(L^0_\lambda(\mathfrak w))$.
\end{cor}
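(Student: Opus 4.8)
The plan is to deduce this as an immediate consequence of Theorem~\ref{new4.1}, using only the fact that $W=C(\Id_\Si,1)$ is the central generator of the $\BZ$ in the extension $\widetilde\Gamma(\Si)$. First I would recall the defining relation $W(\alpha)=W\circ C(\alpha)$, so that each factor of $W(\mathfrak w)=\prod_{i=1}^n W(\alpha_i)^{\varepsilon_i}$ is of the form $(W\circ C(\alpha_i))^{\varepsilon_i}$, and then pull all copies of the central element $W$ to the front.

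The key step is this factoring. Since $W$ is central, for $\varepsilon_i=+1$ one has $(W\circ C(\alpha_i))^{+1}=W\circ C(\alpha_i)$, while for $\varepsilon_i=-1$ one has $(W\circ C(\alpha_i))^{-1}=C(\alpha_i)^{-1}\circ W^{-1}=W^{-1}\circ C(\alpha_i)^{-1}$; in both cases $(W\circ C(\alpha_i))^{\varepsilon_i}=W^{\varepsilon_i}\circ C(\alpha_i)^{\varepsilon_i}$. Collecting all the central factors (again using centrality to move them past the $C(\alpha_j)^{\varepsilon_j}$), I obtain
\begin{equation}
\notag
W(\mathfrak w)=\prod_{i=1}^n\bigl(W^{\varepsilon_i}\circ C(\alpha_i)^{\varepsilon_i}\bigr)=W^{e(\mathfrak w)}\circ\prod_{i=1}^n C(\alpha_i)^{\varepsilon_i}=W^{e(\mathfrak w)}\circ C(\mathfrak w),
\end{equation}
where the total power of $W$ is exactly $\sum_{i=1}^n\varepsilon_i=e(\mathfrak w)$.

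Finally I would invoke Theorem~\ref{new4.1}, which identifies $C(\mathfrak w)=C(f,n^0_\lambda(\mathfrak w))$, together with the observation that left multiplication by $W$ simply adds one to the weight: from the multiplication rule (\ref{comp}) with $g=\Id_\Si$, the Maslov term $\mu(f_*\lambda,\lambda,\lambda)$ vanishes by Lemma~\ref{2lag}, so $W\circ C(f,m)=C(f,m+1)$ and hence $W^{k}\circ C(f,m)=C(f,m+k)$. Applying this with $k=e(\mathfrak w)$ gives
\begin{equation}
\notag
W(\mathfrak w)=W^{e(\mathfrak w)}\circ C(f,n^0_\lambda(\mathfrak w))=C\bigl(f,\,e(\mathfrak w)+n^0_\lambda(\mathfrak w)\bigr)=C(f,n_\lambda(\mathfrak w)),
\end{equation}
as claimed. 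There is essentially no obstacle here: the only point requiring a moment's care is the sign bookkeeping when factoring $W$ out of the inverse letters $W(\alpha_i)^{-1}$, and this is resolved instantly by centrality. All the substantive content lives in Theorem~\ref{new4.1}; this corollary merely records how the weight shift $C(\alpha)\mapsto W(\alpha)=W\circ C(\alpha)$ propagates through the surgery formula, contributing the extra term $e(\mathfrak w)$.
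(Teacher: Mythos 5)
Your proof is correct and is precisely the argument the paper leaves implicit when it calls this an ``immediate'' corollary of Theorem~\ref{new4.1}: centrality of $W$ lets you collect the factors into $W^{e(\mathfrak w)}\circ C(\mathfrak w)$, and the vanishing Maslov term (Lemma~\ref{2lag}) shows each power of $W$ just shifts the weight by one. Nothing is missing; the sign bookkeeping for $\varepsilon_i=-1$ is handled exactly as it should be.
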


In the rest of this section, we show that $W^4\in
  \widetilde\Gamma(\Si)^{++}$. Thus, $\widetilde\Gamma(\Si)^{++}$  has
  index at most four in $\widetilde \Gamma({ 
    \Si})$. In the next section, we will see that the index of
  $\widetilde\Gamma(\Si)^{++}$  in $\widetilde\Gamma(\Si)^{}$  is
  equal to four.

\begin{lem}\label{relator} If $\mathfrak w$ is a relator in the mapping
  class group (i.e., if $D({\mathfrak w} )=\Id_\Si$), then 
  $\sigma(L^0_\lambda(\mathfrak w) )=\sigma(L_\lambda(\mathfrak w))$,
  where $L_\lambda(\mathfrak w)$ is obtained from $L^0_\lambda(\mathfrak w)$ by omitting the zero-framed
unlink $U$.
\end{lem}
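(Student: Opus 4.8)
The plan is to reduce the statement to a short linear-algebra computation with the linking matrix, using the relator hypothesis only to pin down two ranks. I would write the $(n+g)\times(n+g)$ linking matrix of $L^0_\lambda(\mathfrak w)$ in block form
\[
M=\begin{pmatrix} A & B\\ B^{T} & 0\end{pmatrix},
\]
where $A$ is the $n\times n$ linking matrix of $L_\lambda(\mathfrak w)=L(\mathfrak w)$, the block $B$ records the linking numbers between the components of $L(\mathfrak w)$ and those of the zero-framed unlink $U$, and the lower-right $g\times g$ block is zero because the components of $U$ all have framing zero and are pairwise unlinked (as already noted for a single twist in Remark~\ref{remcol}). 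Then $\sigma(L_\lambda(\mathfrak w))=\sigma(A)$ and $\sigma(L^0_\lambda(\mathfrak w))=\sigma(M)$, so the lemma amounts to the purely linear-algebraic claim that $\sigma(M)=\sigma(A)$.

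Next I would extract from the relator hypothesis $D(\mathfrak w)=\Id_\Si$ the two facts I need, both via the standard identity that $b_1\big((S^3)_L;\BQ\big)$ equals the nullity of the linking matrix of $L$. First, surgery along $L(\mathfrak w)$ alone is supported in $\I\times\Si\subset S^3$ and replaces $\I\times\Si$ by the mapping cylinder of $D(\mathfrak w)=\Id_\Si$, which is $\I\times\Si$ again; hence $(S^3)_{L_\lambda(\mathfrak w)}=\BH\cup(\I\times\Si)\cup\BH'=S^3$, so its first Betti number is zero and $A$ is nonsingular. Second, arguing exactly as in the proof of Theorem~\ref{new4.1}, the manifold $(S^3)_{L^0_\lambda(\mathfrak w)}$ is $Y=\BH\cup C(\mathfrak w)\cup\overline\BH$, whose underlying manifold is $\BH\cup(\I\times\Si)\cup\overline\BH={\#}^{g}S^1\times S^2$; hence its first Betti number is $g$, so $\rank M=(n+g)-g=n$.

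Since $A$ is now invertible, I would diagonalize $M$ by the congruence
\[
\begin{pmatrix} I & 0\\ -B^{T}A^{-1} & I\end{pmatrix} M \begin{pmatrix} I & -A^{-1}B\\ 0 & I\end{pmatrix}=\begin{pmatrix} A & 0\\ 0 & -B^{T}A^{-1}B\end{pmatrix},
\]
which preserves both rank and signature. Comparing ranks gives $n=\rank M=\rank A+\rank\big(B^{T}A^{-1}B\big)=n+\rank\big(B^{T}A^{-1}B\big)$, which forces $B^{T}A^{-1}B=0$. Consequently $\sigma(M)=\sigma(A)+\sigma\big(-B^{T}A^{-1}B\big)=\sigma(A)$, which is the desired equality.

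I expect the only real content to lie in the two Betti-number computations of the second paragraph; once $A$ is known to be nonsingular and $\rank M=n$, the rest is forced. The point to be careful about is that deleting the unlink $U$ corresponds exactly to erasing the last $g$ rows and columns of $M$, and that the relator hypothesis enters only through the topology of the two surgered $3$-manifolds (their first Betti numbers) rather than through any finer property of the links themselves.
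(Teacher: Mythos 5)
Your proof is correct, but it takes a genuinely different route from the paper. The paper's proof stays inside the extended-manifold formalism: it forms the extended gluing $X=\BH\cup C(\mathfrak w)\cup\BH'$, notes that $X$ is $S^3$ with some weight because $D(\mathfrak w)=\Id_\Si$, and then computes that weight twice --- once as $\sigma(L_\lambda(\mathfrak w))$ via Lemma~\ref{4intrep} (since $X$ is extended surgery on $S^3$ along $L_\lambda(\mathfrak w)$), and once as $w(C(\mathfrak w))=\sigma(L^0_\lambda(\mathfrak w))$ via equation~(\ref{weightC}) and strict additivity of weights (the Maslov terms vanish because two of the three lagrangians coincide when $D(\mathfrak w)=\Id_\Si$). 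You instead bypass weights and Maslov indices entirely: the relator hypothesis enters only through the two nullity computations ($(S^3)_{L_\lambda(\mathfrak w)}\cong S^3$ makes $A$ nonsingular, and $(S^3)_{L^0_\lambda(\mathfrak w)}\cong{\#}^{g}S^1\times S^2$ forces $\rank M=n$), after which the Schur-complement congruence forces $B^{T}A^{-1}B=0$ and hence $\sigma(M)=\sigma(A)$. What the paper's argument buys is brevity given machinery already in place (it reuses Theorem~\ref{new4.1} and Lemma~\ref{4intrep} almost verbatim); what yours buys is independence from the weight/Maslov formalism --- it needs only the standard fact that $b_1$ of surgery on a link in $S^3$ equals the nullity of its linking matrix, plus the same two topological identifications of the surgered manifolds that underlie the paper's proof --- and it yields the slightly finer conclusion $B^{T}A^{-1}B=0$ as a byproduct. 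Both proofs are sound; note that your block-form setup (zero lower-right block) is exactly the observation already recorded for a single twist in Remark~\ref{remcol}.
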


\begin{proof}  As in the proof of Theorem~\ref{new4.1}, recall that $\BH \cup (\I \times \Si) \cup \BH'$ is
  $S^3$ with weight zero, since we assume $w(\BH)=w(\BH')=0$. Now consider the extended gluing
  $X=\BH \cup  C({\mathfrak w}) \cup \BH'$.   Since $D({\mathfrak w}
  )=\Id_\Si$, we have that $X$ is $S^3$ with some weight. We compute this
  weight in two ways. On the one hand, $X$ is extended surgery on
  $S^3$ along the link $L_\lambda(\mathfrak w)$, hence 
\begin{equation}\label{W1}
w(X)=\sigma(L_\lambda(\mathfrak w))
\end{equation} by Lemma~\ref{4intrep}. On the other hand,  the fact
  that $D({\mathfrak w}
  )=\Id_\Si$ implies that 
we have strict additivity when computing the weight of the gluing in $X$
(the two
Maslov index terms are zero, because for both of them two of the three
lagrangians coincide). Since moreover $w(\BH)=w(\BH')=0$, we have
\begin{equation}\label{W2}
w(X)
= w(C({\mathfrak
    w})) = \sigma(L^0_\lambda(\mathfrak w))~,
\end{equation} where we used (\ref{weightC}) in the last equality.  Comparing (\ref{W1}) and (\ref{W2}), the lemma
follows.
\end{proof}

\begin{lem}\label{55} We have $W^4\in \widetilde\Gamma(\Si)^{++}$.
\end{lem}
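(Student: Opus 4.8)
The plan is to realize $W^4$ as $W(\mathfrak w)$ for a single explicit relator $\mathfrak w$ in the Dehn twists, i.e.\ a word with $D(\mathfrak w)=\Id_\Si$. Indeed, if $\mathfrak w$ is such a relator, then Corollary~\ref{new52} gives $W(\mathfrak w)=C(\Id_\Si,n_\lambda(\mathfrak w))=W^{\,n_\lambda(\mathfrak w)}$, while $W(\mathfrak w)=\prod_i W(\alpha_i)^{\varepsilon_i}\in\widetilde\Gamma(\Si)^{++}$ by Definition~\ref{ind4}. Hence it suffices to produce one relator $\mathfrak w$ with $n_\lambda(\mathfrak w)=4$. (More conceptually, $\mathfrak w\mapsto W(\mathfrak w)$ is a homomorphism from the free group on simple closed curves whose image is $\widetilde\Gamma(\Si)^{++}$ and which sends relators into the central subgroup $\langle W\rangle$; thus $\widetilde\Gamma(\Si)^{++}\cap\langle W\rangle=\langle W^d\rangle$ where $d$ is the $\gcd$ of the integers $n_\lambda(\mathfrak w)$ over a generating set of relations, and what we want is $d\mid 4$.)

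By Corollary~\ref{new52} together with Lemma~\ref{relator}, for a relator one has $n_\lambda(\mathfrak w)=e(\mathfrak w)+\sigma(L_\lambda(\mathfrak w))$, so the task reduces to a signature computation for the linking matrix of the layered link $L_\lambda(\mathfrak w)$. The natural candidate is the $2$-chain (elliptic) relation: I would choose simple closed curves $a,b$ with $a\cdot b=1$ spanning a genus-one subsurface $T\subset\Si$ with boundary $\delta=\partial T$, and take $\mathfrak w=(ab)^6\,\delta^{-1}$, using the chain relation $(D_aD_b)^6=D_\delta$. Then $e(\mathfrak w)=12-1=11$. I would compute $\sigma(L_\lambda(\mathfrak w))$ by observing that the linking matrix splits: the $12\times 12$ block coming from the six layered copies each of $a$ and $b$ (each framed $-1$) is the intersection form carried by the elliptic Lefschetz fibration with monodromy $(D_aD_b)^6$, and a direct computation shows it has signature $-8$; and, for a suitable well-placed embedding, the separating curve $\delta$ (which has $[\delta]=0$, hence framing $+1$ and trivial linking with the rest) contributes a single $+1$. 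This gives $\sigma(L_\lambda(\mathfrak w))=-8+1=-7$ and $n_\lambda(\mathfrak w)=11-7=4$, as desired. (When $g=1$, $\delta$ bounds a disk and $D_\delta=\Id$, so one may simply take $\mathfrak w=(ab)^6$ with $e=12$ and $\sigma=-8$.)

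The main obstacle is the signature computation, and in particular arranging things so that the answer is exactly $4$ uniformly in the genus and independently of the fixed lagrangian $\lambda(\Si)$. Concretely, one must (i) choose the embedding $\Si\subset S^3$ well placed with respect to $\lambda$ (as in the construction preceding Theorem~\ref{new4.1}) and the subsurface $T$ so that the $(a,b)$-block genuinely decouples from the $\delta$-component and from the meridian unlink, and (ii) track the surface-framing and orientation conventions carefully enough to pin down the signs of the $-8$ and the $+1$. The first point is where $\lambda$ could a priori intervene; one should check that, because the chain relation is supported in $T$ and $[\delta]=0$ lies in every lagrangian, the relevant blocks of the linking matrix are insensitive to $\lambda$. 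Note that $g\ge 1$ is needed for such a relator to exist; for $g=0$ every curve bounds a disk, $\widetilde\Gamma(\Si)^{++}$ is trivial, and the statement fails, so the genus-zero case is implicitly excluded.
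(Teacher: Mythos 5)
Your proposal is correct and takes essentially the same route as the paper: the paper's own proof uses exactly the chain relator $\mathfrak u=(m\ell)^6\delta^{-1}$ on a one-holed torus (embedded arbitrarily in $\Si$), with $e(\mathfrak u)=11$ and $\sigma(L_\lambda(\mathfrak u))=-7$, and concludes via Lemma~\ref{relator} and Corollary~\ref{new52} that $W(\mathfrak u)=C(\Id_\Si,4)=W^4\in\widetilde\Gamma(\Si)^{++}$. The only difference is that the paper merely asserts the values $e=11$, $\sigma=-7$ (deferring to the proof of Proposition~\ref{8.7} and a Kirby-calculus computation), whereas you justify the signature by a block decomposition ($-8$ from the twelve layered $(a,b)$-components plus $+1$ from the unlinked, $+1$-framed separating curve $\delta$), and these numbers do check out.
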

\begin{proof} There exists a relator $\mathfrak u$ with 
  $e(\mathfrak u)=11$
and
  $\sigma(L_\lambda(\mathfrak u))=-7$.  This relator lives on a
  one-holed torus (embedded into $\Si$ in an arbitrary fashion) and is described in more detail in the
  proof of Proposition~\ref{8.7}. Using Lemma~\ref{relator}, we have
  $n_\la(\mathfrak u)=4$ and therefore $W(\mathfrak
  u)=C(\Id_\Si, 4)=W^4$ by Corollary~\ref{new52}. Thus $W^4\in
  \widetilde\Gamma(\Si)^{++}$. 
\end{proof}

\section{Algebraic description of $\widetilde\Gamma(\Si)^{++}$}
\label{sec5}

In this section, we give a purely algebraic description of 
$\widetilde\Gamma(\Si)^{++}$.
When it should cause no confusion, we
use the same letter to denote a mapping class group element and its
induced map on the rational first homology of the surface. Unless
otherwise stated, all homology groups are with rational coefficients.  
As before, we write $\cdot$ for the intersection form on $H_1(\Si)$.   
We denote
the lagrangian $\lambda(\Si)$ simply by $\lambda$.

Our algebraic description of 
$\widetilde\Gamma(\Si)^{++}$ 
uses the bilinear form $\star_f$ described in the following
lemma. This form was introduced by Turaev \cite{T2,T3}.
 
\begin{lem} 
\label{starfdef} 
(Turaev \cite[2.1,2.2]{T3})
If $f \in \Gamma({  \Si})$, then
\[a \star_{f} b =(f-1)^{-1}(a) \cdot b  \] is a well-defined non-singular  bilinear form on $(f-1)H_1({\Si}).$
\end{lem}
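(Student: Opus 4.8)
The plan is to prove the two assertions of Lemma~\ref{starfdef} in order: first that $\star_f$ is well-defined on the quotient-free level, i.e.\ that the formula $a\star_f b=(f-1)^{-1}(a)\cdot b$ makes sense and is independent of the choice of preimage, and second that the resulting bilinear form on $(f-1)H_1(\Si)$ is non-singular. Throughout I write $W=H_1(\Si;\BQ)$ equipped with the symplectic form $\cdot$, and I regard $f$ as the induced symplectic automorphism of $W$. The key algebraic fact I would extract at the outset is the following self-adjointness/orthogonality identity: since $f$ is symplectic, $f(x)\cdot f(y)=x\cdot y$ for all $x,y$, which after rearranging gives
\begin{equation}\label{symplid}
(f-1)x\cdot y + x\cdot (f-1)y = (f-1)x\cdot (f-1)y~.
\end{equation}
This single identity, obtained by expanding $f(x)\cdot f(y)-x\cdot y=0$, will drive both parts of the proof.

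For well-definedness, the first point is that $(f-1)^{-1}(a)$ is only a coset: for $a\in(f-1)W$, any two preimages differ by an element of $\ker(f-1)$. So I must check that $v\cdot b$ is independent of the choice of $v$ with $(f-1)v=a$, provided $b\in(f-1)W$. Writing $b=(f-1)w$, suppose $(f-1)v_1=(f-1)v_2=a$, so $v_1-v_2\in\ker(f-1)$. Then the difference in the two values is $(v_1-v_2)\cdot (f-1)w$, and by \eqref{symplid} applied with $x=v_1-v_2$ (noting $(f-1)(v_1-v_2)=0$) we get $(v_1-v_2)\cdot(f-1)w = -(f-1)(v_1-v_2)\cdot w = 0$. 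Hence the value depends only on the coset, and $\star_f$ is a well-defined bilinear form on $(f-1)W$. The main subtlety here is purely bookkeeping: making sure both arguments of $\star_f$ are genuinely required to lie in the image $(f-1)W$, since this is exactly what kills the ambiguity.

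For non-singularity, suppose $a\in(f-1)W$ satisfies $a\star_f b=0$ for all $b\in(f-1)W$; I must show $a=0$. Write $a=(f-1)v$, so the hypothesis reads $v\cdot(f-1)w=0$ for all $w\in W$. By \eqref{symplid} this is equivalent to $(f-1)v\cdot w=(f-1)v\cdot(f-1)w$ for all $w$; but I can reorganize more directly by noting that $v\cdot(f-1)w=0$ for all $w$ says $v$ is orthogonal to the entire subspace $(f-1)W$ under the symplectic form, i.e.\ $v\in\big((f-1)W\big)^\perp$. The standard fact that for a symplectic form the orthogonal complement of $(f-1)W$ equals $\ker(f-1)$ (which follows because $(f-1)^\ast=-(f^{-1}-1)f$ is, up to an isomorphism, related to $f-1$, and more cleanly because $x\cdot(f-1)w=0\ \forall w \iff (f^{-1}-1)x\cdot w'=0\ \forall w' \iff (f^{-1}-1)x=0 \iff (f-1)x=0$) then gives $v\in\ker(f-1)$, whence $a=(f-1)v=0$. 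I expect this orthogonality computation to be the main obstacle, since it requires correctly transposing $f-1$ across the symplectic pairing; the clean way is to use nonsingularity of $\cdot$ to define the adjoint and verify $\big((f-1)W\big)^\perp=\ker\big((f-1)^{\mathrm{ad}}\big)=\ker(f-1)$, the last equality because $f-1$ and its symplectic adjoint have the same kernel dimension and in fact the same kernel after using $f$ symplectic. Once this is in place, non-singularity of $\star_f$ on $(f-1)W$ follows immediately, completing the proof.
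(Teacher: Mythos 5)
Your proof is correct in substance and follows essentially the same route as the paper: well-definedness comes from moving $f$ across the $f$-invariant intersection form, so that a difference of preimages, being fixed by $f$, pairs trivially with everything in $(f-1)H_1(\Si)$; and non-singularity comes from identifying the annihilator of $(f-1)H_1(\Si)$ with respect to $\cdot$ with $\ker(f-1)$. The only real difference is how that identification is obtained: the paper proves the inclusion $\ker(f-1)\subseteq \bigl((f-1)H_1(\Si)\bigr)^\perp$ from the identity $(f-1)a\cdot b=-f(a)\cdot(f-1)b$ and then counts dimensions, whereas you get the equality in one stroke from $x\cdot(f-1)w=(f^{-1}-1)x\cdot w$ together with nonsingularity of $\cdot$, thereby avoiding the dimension count. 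That is a minor, arguably slightly cleaner, variation.

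However, one displayed formula in your write-up is false and must be corrected: your ``key identity'' has the wrong sign. Expanding $0=f(x)\cdot f(y)-x\cdot y$ with $f=(f-1)+1$ gives
\[
(f-1)x\cdot y + x\cdot (f-1)y = -\,(f-1)x\cdot (f-1)y,
\]
with a minus sign on the right, not a plus sign. (Concretely, for $f(e_1)=e_2$, $f(e_2)=-e_1$ on the standard symplectic plane with $e_1\cdot e_2=1$, and $x=e_1$, $y=e_2$, your version asserts $-2=+2$.) Your proof nevertheless survives, because you only ever extract consequences from this identity when $(f-1)x=0$, in which case both the first term on the left and the entire right-hand side vanish whatever the sign; and in the non-singularity step, the sentence invoking the identity is immediately abandoned in favor of the direct orthogonality computation, which is the correct one. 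For the same reason the parenthetical adjoint formula $(f-1)^\ast=-(f^{-1}-1)f$ should be deleted or fixed: that expression simplifies to $f-1$, while the actual symplectic adjoint of $f-1$ is $f^{-1}-1=-(f-1)f^{-1}$; again, this aside carries no weight in your argument. So flip the sign in the key identity and remove the faulty aside, and the proof is complete and correct.
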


Here, $(f-1)^{-1}(a) \cdot b$ means $x\cdot b$ where $x$ is any
element of $(f-1)^{-1}(a) $.

\begin{proof} Suppose that $x_1,x_2 \in (f-1)^{-1}(a)$. We  let
  $x=x_1-x_2.$ To see 
 that   
$\star_{f}$  is well-defined on  $(f-1)
  H_1({\Si})$,  we need to see that 
\begin{equation}\label{sttq} x \cdot b=0
\end{equation}
  provided $b=(f-1)(y)$ for some $y\in H_1({\Si})$. This
  is shown as follows. Since $(f-1)(x)=0$, we have  $f(x)=x$, and hence
  also $f^{-1}(x)=x$. Using that the intersection form $\cdot$ is
  preserved by $f^{-1}$,
  the following computation proves  (\ref{sttq}): 
$$ x \cdot b =x \cdot f(y) - x \cdot y=f^{-1}(x) \cdot y - x \cdot y= x \cdot y - x \cdot y=0~.$$

 To show non-singularity of the form $\star_f$, observe that 
$$ (f-1)(a) \cdot b= f(a) \cdot b - a \cdot b = f(a) \cdot b -
f(a)\cdot f(b) = -f(a) \cdot
 (f-1)(b)$$ for all
 $a,b \in H_1({\Si})$. Hence the kernel of $f-1$ is contained in the annihilator (with respect to
 $\cdot$) of $(f-1)H_1(\Si)$. Counting dimensions, it follows that the
 kernel of $f-1$ is equal to this annihilator. This proves that
 $\star_f$ is non-singular on $(f-1)
  H_1({\Si})$.

\end{proof}

\begin{de}(Turaev) We define  
 $\sgn[\det(\star_{f})]$ 
to be the sign of the
  determinant of a matrix for $\star_f$ with respect to a basis of $(f-1)H_1(\Si)$.
\end{de}
 
Note that 
 $\sgn[\det(\star_{f})]$ 
does not depend on the choice of
the basis. The 
form  $\star_{f}$ is neither symmetric 
 nor
 skew-symmetric in general,
but the definition of 
$\sgn[\det(\star_{f})]$ makes sense. Since
$\star_{f}$ is non-singular, we have 
 $$\sgn[\det(\star_{f})]=\pm 1~.$$
Here, let us agree that 
$\sgn[\det(\star_{\Id})]=1$ ({\em i.e.,} the
determinant of a $0 \times 0$ matrix should be taken to be one.)

\begin{rem}{\em 
Turaev \cite{T2,T3} denotes 
 $\sgn[\det(\star_{f})]$ by
    $\varepsilon(f)$.
}\end{rem}

We need the following simple observation.
 
\begin{lem} For every lagrangian $\la\subset H_1(\Si)$, the
  restriction of the form $\star_{f}$ to $\lambda
     \cap (f-1)H_1(\Si)$ is symmetric.
\end{lem}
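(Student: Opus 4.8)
The plan is to verify directly that $a \star_{f} b = b \star_{f} a$ for any two elements $a, b$ of the subspace $\la \cap (f-1)H_1(\Si)$. Since both $a$ and $b$ lie in $(f-1)H_1(\Si)$, I may write $a = (f-1)(x)$ and $b = (f-1)(y)$ for some $x, y \in H_1(\Si)$. By the definition of the form in Lemma~\ref{starfdef}, one has $a \star_{f} b = x \cdot b = x \cdot (f-1)(y)$ and $b \star_{f} a = y \cdot (f-1)(x)$, these expressions being independent of the chosen preimages $x, y$.

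The crucial input is that $a$ and $b$ both lie in the lagrangian $\la$, so that $a \cdot b = 0$. Expanding $(f-1)(x) \cdot (f-1)(y) = 0$ and using that $f$ preserves the intersection form (so that $f(x) \cdot f(y) = x \cdot y$), I obtain the identity
\begin{equation}\label{star-sym-key}
x \cdot f(y) + f(x) \cdot y = 2\, x\cdot y~.
\end{equation}

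It then remains to compute the difference $a \star_{f} b - b \star_{f} a$. Using skew-symmetry of $\cdot$ to rewrite $b \star_{f} a = y \cdot (f-1)(x) = x \cdot y - f(x)\cdot y$, and similarly $a \star_{f} b = x \cdot f(y) - x \cdot y$, the difference becomes $x \cdot f(y) + f(x) \cdot y - 2\, x \cdot y$, which vanishes by~(\ref{star-sym-key}). This proves that $\star_f$ is symmetric on $\la \cap (f-1)H_1(\Si)$.

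The computation is entirely elementary; I expect no real obstacle beyond correctly tracking the signs produced by the skew-symmetry of the intersection form. The conceptual point worth emphasizing is that the lagrangian hypothesis $a \cdot b = 0$ is precisely what converts the generically non-symmetric form $\star_f$ into a symmetric one on this particular subspace.
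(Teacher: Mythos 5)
Your proof is correct and is essentially the same as the paper's: both write $a=(f-1)x$, $b=(f-1)y$, reduce the difference $a\star_f b - b\star_f a$ to the expression $x\cdot f(y)+f(x)\cdot y-2\,x\cdot y$, and show it vanishes by expanding the lagrangian condition $a\cdot b=0$ using that $f$ preserves the intersection form. No meaningful difference in approach.
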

\begin{proof} Suppose that $(f-1)x=a \in \lambda$, and  $(f-1)y=b\in
\lambda$, then
\[ a\star_{f}b-b\star_{f} a=  x\cdot (f(y)-y) -y \cdot (f(x)- x)= x \cdot
f(y) +  f(x)  \cdot y-2 x \cdot y \]
On the other hand,
\[0=b \cdot a= (f(y) -y)\cdot (f(x)-x)= x \cdot f(y) +  f(x)  \cdot y-2 x
\cdot y \] Thus $a\star_{f}b=b\star_{f} a$, as asserted.
\end{proof}

\begin{de} Let $\star_{f,\lambda}$ denote the restriction of the form $\star_{f}$ to $\lambda
     \cap (f-1)H_1(\Si)$. We denote the signature of this form by 
 $\Signature(\star_{f,\lambda})$.
\end{de}

We can now state the main result of this section. 

   \begin{thm}\label{hom} Given $f\in\Gamma(\Si)$ and a lagrangian
     $\lambda$, define 
\begin{equation}  
\notag
n_\la(f) =
\Signature(\star_{f,\lambda})
-\dim((f-1)H_1(\Si))-
\sgn[\det(\star_{f})]  +1~.
\end{equation}
Then the set $$\{ C(f,n)\, | \, f\in\Gamma(\Si), \ n\equiv
n_\la(f) \pmod 4 \}$$
is an index four subgroup of $\widetilde\Gamma(\Si)$. 
 \end{thm}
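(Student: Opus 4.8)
**

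The plan is to show that the subset
\[
G=\{C(f,n)\mid f\in\Gamma(\Si),\ n\equiv n_\la(f)\pmod 4\}
\]
is a subgroup of index four. Since $\widetilde\Gamma(\Si)$ sits in an exact sequence $0\to\BZ\to\widetilde\Gamma(\Si)\to\Gamma(\Si)\to 1$ with the $\BZ$ generated by $W=C(\Id_\Si,1)$, and since $W$ shifts the weight by one, the subset $G$ meets each fiber of $\widetilde\Gamma(\Si)\to\Gamma(\Si)$ in exactly the residue class $n_\la(f)\pmod 4$. Thus $G$ is automatically a transversal-to-$4\BZ$ condition: \emph{if} $G$ is a subgroup, it necessarily has index four, because $G\cap\kernel=4\BZ\cdot W$ (using $n_\la(\Id_\Si)=0$, which one checks from the formula since all four terms vanish for $f=\Id$). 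So the entire content is to prove that $G$ is closed under multiplication and inversion.

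The natural strategy is to reduce the subgroup property to the already-established generating lifts $W(\alpha)$. First I would show that $G$ \emph{contains} all the generators $W(\alpha)$ and the element $W^4$, hence contains $\widetilde\Gamma(\Si)^{++}$ from Definition~\ref{ind4} together with $W^4$; this uses Corollary~\ref{new52}, which computes $W(\mathfrak w)=C(f,n_\lambda(\mathfrak w))$ with $n_\lambda(\mathfrak w)=e(\mathfrak w)+\sigma(L^0_\lambda(\mathfrak w))$, and one must check that for a single Dehn twist this integer is congruent mod $4$ to the homological formula for $n_\la(f)$. Conversely, because $\widetilde\Gamma(\Si)^{++}$ is generated by the $W(\alpha)$ and contains $W^4$ (Lemma~\ref{55}), and since $G\cap\kernel\supseteq 4\BZ W$, the cleanest route is to prove the two sets coincide: $G=\widetilde\Gamma(\Si)^{++}\cdot\langle W^4\rangle=\widetilde\Gamma(\Si)^{++}$. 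This converts the assertion ``$G$ is a subgroup of index four'' into ``$G$ equals the subgroup $\widetilde\Gamma(\Si)^{++}$,'' which is manifestly a subgroup.

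Concretely, I would establish the key \emph{cocycle identity}: for any word $\mathfrak w$ representing $f=D(\mathfrak w)$,
\begin{equation}
\notag
n_\lambda(\mathfrak w)\equiv n_\la(f)\pmod 4,
\end{equation}
i.e.\ that the \emph{surgery} integer $e(\mathfrak w)+\sigma(L^0_\lambda(\mathfrak w))$ reduces mod $4$ to the \emph{homological} integer $\Signature(\star_{f,\lambda})-\dim((f-1)H_1(\Si))-\sgn[\det(\star_f)]+1$. Granting this identity, $W(\mathfrak w)=C(f,n_\lambda(\mathfrak w))\in G$ for every word, so $\widetilde\Gamma(\Si)^{++}\subseteq G$; and since every element $C(f,m)\in G$ has $m\equiv n_\la(f)\equiv n_\lambda(\mathfrak w)\pmod 4$ for any chosen word $\mathfrak w$ with $D(\mathfrak w)=f$, it differs from $W(\mathfrak w)$ by a power of $W^4\in\widetilde\Gamma(\Si)^{++}$ (Lemma~\ref{55}), giving $G\subseteq\widetilde\Gamma(\Si)^{++}$. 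The two inclusions yield $G=\widetilde\Gamma(\Si)^{++}$, a subgroup; and the index is four because $G$ meets each fiber in precisely one residue class mod $4$.

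The main obstacle is the cocycle identity itself, which is exactly the congruence that the paper attributes to combining Walker's and Turaev's work. Proving it mod $4$ (not merely mod $2$) requires the delicate sign bookkeeping flagged in the introduction: one must match the signature of the linking matrix of $L^0_\lambda(\mathfrak w)$ against Turaev's $\sgn[\det(\star_f)]$ and the $\lambda$-dependent correction $\Signature(\star_{f,\lambda})$. I expect this to hinge on Walker's formula relating the signature cocycle to the Maslov cocycle (Theorem~\ref{ctd}), together with the identification of Turaev's algebraic cocycle with $-\tau$ (Proposition~\ref{fgint}), so that both $n_\lambda(\mathfrak w)$ and $n_\la(f)$ are seen to be primitives of the \emph{same} cocycle differing by a $1$-coboundary that vanishes mod $4$. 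The genuine work, deferred in the excerpt to Section~\ref{sec5b}, is verifying this coboundary agreement with correct signs.
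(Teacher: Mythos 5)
Your group-theoretic reduction is sound: $n_\la(\Id_\Si)=0$, the set $G$ meets each fiber of $\widetilde\Gamma(\Si)\to\Gamma(\Si)$ in exactly one residue class mod $4$, and, granting your ``cocycle identity'' $n_\la(\mathfrak w)\equiv n_\la(D(\mathfrak w))\pmod 4$ for every word $\mathfrak w$, the two inclusions you describe do yield $G=\widetilde\Gamma(\Si)^{++}$ and hence the theorem (this is essentially the paper's Corollary~\ref{hom-ind} run in reverse). The genuine gap is that this identity is not an auxiliary step one may ``grant'': it is the theorem itself in disguise, and nothing in your write-up proves it, as you yourself flag at the end. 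Since $W(\mathfrak w'\mathfrak w'')=W(\mathfrak w')\circ W(\mathfrak w'')$ and weights compose via the Maslov cocycle (formula~(\ref{comp}), Corollary~\ref{new52}), your identity for all words is equivalent, by induction on word length (with the single-twist base case you do address), to the single congruence $m_\la+\delta n_\la\equiv 0\pmod 4$. That congruence already gives closure of $G$ directly, since ${C}(g,n_\la(g)) \circ {C}(f,n_\la(f))= {C}\bigl(g \circ f,\ n_\la(gf) +(m_\la+\delta n_\la)(g,f)\bigr)$; this is exactly how the paper proceeds, so the detour through $\widetilde\Gamma(\Si)^{++}$ buys nothing, and the crux remains exactly where you started.

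What the proof of that congruence requires --- and what the paper supplies in Section~\ref{sec5b} --- is a chain of three results with compatible signs. One writes $n_\la=-j_\la-k$, where $k(f)=\dim((f-1)H_1(\Si))+\sgn[\det(\star_f)]-1$ is Turaev's $1$-cochain satisfying $\delta k\equiv\varphi\pmod 4$ (Theorem~\ref{div}), and $j_\la(f)$ is the signature of Walker's $4$-manifold $J_\la(f)$; the bridge $j_\la(f)=-\Signature(\star_{f,\la})$ (Proposition~\ref{Walkmfd}, a Mayer--Vietoris intersection-form computation) is what allows the homological term $\Signature(\star_{f,\la})$ in $n_\la$ to be interpreted topologically at all, and your proposal omits it entirely. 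Then Walker's relation $\delta j_\la=\tau+m_\la$ (Theorem~\ref{ctd}, proved by exhibiting a $5$-manifold whose boundary is assembled from $\overline{W(f,g)}$, $U(\BH_{g\circ f},\BH_g,\BH)$ and three copies of $J_\la$, together with Novikov additivity) and the identification $\tau=-\varphi$ (Proposition~\ref{fgint}) combine to give $\delta n_\la\equiv-\tau-m_\la-\varphi\equiv-m_\la\pmod 4$. You correctly name Theorem~\ref{ctd} and Proposition~\ref{fgint} as expected ingredients, but you neither state nor prove them, you omit Proposition~\ref{Walkmfd} and Theorem~\ref{div} altogether, and you do not carry out the assembly; so what you have is a correct plan of attack, not a proof.
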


The proof of Theorem~\ref{hom} will be given in 
Section~\ref{sec5b}.
The following  
corollary
relates the algebraic approach of
the present section with the 
approach {\em via} extended surgery
of
the previous sections.

\begin{cor}\label{hom-ind} The index four subgroup of
  $\widetilde\Gamma(\Si)$ given 
  in Theorem~\ref{hom} is equal to the subgroup  $\widetilde\Gamma(\Si)^{++}$
defined in Section~\ref{newsec5}.
\end{cor}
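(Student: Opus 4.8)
The plan is to prove Corollary~\ref{hom-ind} by showing that the
index-four subgroup $G$ furnished by Theorem~\ref{hom} coincides with
$\widetilde\Gamma(\Si)^{++}$. Since both are subgroups of
$\widetilde\Gamma(\Si)$ of index four (for $G$ this is the content of
Theorem~\ref{hom}; for $\widetilde\Gamma(\Si)^{++}$ we know from
Lemma~\ref{55} that its index is \emph{at most} four), it suffices to
prove one inclusion, say
$\widetilde\Gamma(\Si)^{++}\subseteq G$. Indeed, an inclusion of a
subgroup of index $\le 4$ into a subgroup of index exactly $4$ forces
equality and, as a bonus, pins down the index of
$\widetilde\Gamma(\Si)^{++}$ to be exactly four, completing the claim
made at the end of Section~\ref{newsec5}.

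To establish $\widetilde\Gamma(\Si)^{++}\subseteq G$, I would first
reduce to checking the generators. By Definition~\ref{ind4},
$\widetilde\Gamma(\Si)^{++}$ is generated by the elements
$W(\alpha)$ over all simple closed curves $\alpha$. Since $G$ is the
set of all $C(f,n)$ with $n\equiv n_\la(f)\pmod 4$, and since $G$ is
already known to be a subgroup, it is enough to verify that each
generator lies in $G$; products and inverses then stay in $G$
automatically. More generally, for a word $\mathfrak w$ with
$f=D(\mathfrak w)$, Corollary~\ref{new52} gives
$W(\mathfrak w)=C(f,n_\lambda(\mathfrak w))$ with
$n_\lambda(\mathfrak w)=e(\mathfrak w)+\sigma(L^0_\lambda(\mathfrak
w))$. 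So the whole corollary comes down to the single congruence
\begin{equation}
\notag
n_\lambda(\mathfrak w)\equiv n_\la(f)\pmod 4,
\end{equation}
where the left side is the surgery-theoretic quantity of
Corollary~\ref{new52} and the right side is the homological quantity
$\Signature(\star_{f,\lambda})-\dim((f-1)H_1(\Si))-\sgn[\det(\star_f)]+1$
from Theorem~\ref{hom}. Establishing this congruence identifies the
generator $W(\mathfrak w)$ as an element of $G$, giving the desired
inclusion.

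The main obstacle, and the real content, is precisely this congruence
between the surgery formula and the homological formula. Its proof is
what the later sections are built to supply: the right-hand side is
expressed through Turaev's form $\star_f$, whose sign-of-determinant
is Turaev's $\varepsilon(f)$, while the left-hand side is a signature
of an explicit linking matrix of $L^0_\lambda(\mathfrak w)$. Bridging
the two requires the promised comparison of the Maslov cocycle with
Meyer's signature cocycle $\tau$ (via Walker's formula,
Theorem~\ref{ctd}, and the identification of Turaev's cocycle with
$-\tau$ in Proposition~\ref{fgint}), together with the observation
that the lagrangian-dependent correction term is exactly
$\Signature(\star_{f,\lambda})$. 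Because the statement is a congruence
modulo four rather than merely modulo two, the sign conventions must
be tracked carefully throughout; this is where I expect the
difficulty to concentrate, and it is the reason the proof of
Theorem~\ref{hom} is deferred to Section~\ref{sec5b}. Once
Theorem~\ref{hom} is in hand, Corollary~\ref{hom-ind} follows by the
index-counting argument above with no further geometry.
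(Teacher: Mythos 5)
Your skeleton coincides with the paper's own proof: reduce to the generators $W(\alpha)$ of $\widetilde\Gamma(\Si)^{++}$, use that the set $G$ of Theorem~\ref{hom} is already known to be a subgroup to get the inclusion $\widetilde\Gamma(\Si)^{++}\subseteq G$, and conclude equality by index counting {\em via} Lemma~\ref{55}. That part is fine. The genuine gap is at the one step carrying actual content: you never verify that the generators lie in $G$. Having correctly reduced to checking $W(\alpha)\in G$, you pivot to the congruence $n_\la(\mathfrak w)\equiv n_\la(f)\pmod 4$ for {\em arbitrary} words and assert that proving it requires Walker's Theorem~\ref{ctd}, Proposition~\ref{fgint} and the cocycle comparisons. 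This mislocates the difficulty. That machinery is what proves Theorem~\ref{hom} itself (that $G$ is an index-four subgroup), which the corollary takes as given; it is not needed again, and as stated (an identity between $2$-cocycles and coboundaries) it does not by itself yield the congruence for a word $\mathfrak w$. Worse, the general-word congruence is, in the paper, deduced as a {\em consequence} of this very corollary (it is the corollary stated immediately after it), so routing the proof through it risks circularity.

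What is actually needed --- and what the paper does with the phrase ``easy to check from (\ref{41})'' --- is a one-dimensional homological computation for a single Dehn twist $f=D(\alpha)$. On homology $f$ acts by $x\mapsto x\pm(x\cdot[\alpha])[\alpha]$, so $(f-1)H_1(\Si)$ is spanned by $[\alpha]$ (and is zero if $[\alpha]=0$). One then evaluates each term of $n_\la(f)=\Signature(\star_{f,\la})-\dim((f-1)H_1(\Si))-\sgn[\det(\star_f)]+1$ directly: when $[\alpha]\in\la$ the restricted form $\star_{f,\la}$ equals $\star_f$ on the line $\langle[\alpha]\rangle$ and the terms cancel to give $n_\la(f)=0$; when $[\alpha]\notin\la$ the domain of $\star_{f,\la}$ is zero and (with the paper's twist convention fixing the sign of $\det(\star_f)$) one gets $n_\la(f)=1$. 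Comparing with formula (\ref{41}), which says the weight of $W(\alpha)$ is $0$ or $1$ in the respective cases, shows each generator lies in $G$, and your index argument then finishes the proof. Without this computation (or some substitute for it), the inclusion $\widetilde\Gamma(\Si)^{++}\subseteq G$ is not established, and the machinery you appeal to does not fill that hole.
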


\begin{proof} Let $S$ denote the index four subgroup of
  $\widetilde\Gamma(\Si)$ given 
  in Theorem~\ref{hom}. Recall that $\widetilde\Gamma(\Si)^{++}$ was defined as the
subgroup of $\widetilde\Gamma(\Si)$ generated by the elements
$W(\alpha)$ which were certain 
lifts of the Dehn twists $D(\alpha)$ to $\widetilde\Gamma(\Si)$. It is
easy to check 
from (\ref{41}) that if $n$ is the weight of $W(\alpha)$ then $n\equiv
n_\la(D(\alpha)) \pmod 4$. Thus
\begin{equation}\label{indS}
\widetilde\Gamma(\Si)^{++} \subset S~.
\end{equation} But since $S$ has index four in
$\widetilde\Gamma(\Si)$, and we know that $W^4=C(\Id_\Si,4) \in
\widetilde\Gamma(\Si)^{++}$ by Lemma~\ref{55}, the inclusion
(\ref{indS}) is an equality.
\end{proof}

The following corollary 
is immediate. 
\begin{cor} If $f$ is given as a word in Dehn twists
  $f=D(\mathfrak w)$, then the integer $n_\lambda(\mathfrak w)=
  e(\mathfrak w) +\sigma(L^0_\lambda(\mathfrak w))$ defined in
  Corollary~\ref{new52} satisfies the congruence
$$n_\lambda(\mathfrak w)\equiv n_\la(f) \pmod 4~.$$
\end{cor}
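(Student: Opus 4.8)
The plan is to deduce this corollary directly from Corollary~\ref{new52} and Corollary~\ref{hom-ind}, with essentially no new computation. The whole point is that both integers in question are weights attached to the \emph{same} element of $\widetilde\Gamma(\Si)$, and that this element lies in the subgroup $\widetilde\Gamma(\Si)^{++}$, whose membership condition is precisely a congruence modulo four.

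First I would observe that, by definition, $W(\mathfrak w)=\prod_{i=1}^n W(\alpha_i)^{\varepsilon_i}$ is a product of the generators $W(\alpha)$ of $\widetilde\Gamma(\Si)^{++}$ (Definition~\ref{ind4}), so $W(\mathfrak w)\in\widetilde\Gamma(\Si)^{++}$. On the other hand, Corollary~\ref{new52} identifies this element explicitly as $W(\mathfrak w)=C(f,n_\lambda(\mathfrak w))$ with $n_\lambda(\mathfrak w)=e(\mathfrak w)+\sigma(L^0_\lambda(\mathfrak w))$.

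Next, Corollary~\ref{hom-ind} asserts that $\widetilde\Gamma(\Si)^{++}$ coincides with the index-four subgroup $S$ of Theorem~\ref{hom}, namely the set of all $C(f,n)$ with $n\equiv n_\la(f)\pmod 4$. Combining these two facts, $C(f,n_\lambda(\mathfrak w))\in\widetilde\Gamma(\Si)^{++}=S$, and membership in $S$ forces exactly the congruence $n_\lambda(\mathfrak w)\equiv n_\la(f)\pmod 4$, which is the claim.

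There is no real obstacle here: all the substantive work has already been carried out in Theorem~\ref{hom}, Corollary~\ref{new52}, and Corollary~\ref{hom-ind}. The only thing one must be slightly careful about is the bookkeeping, that is, checking that $W(\mathfrak w)$ is genuinely a word in the generators $W(\alpha)$ (so that it lands in $\widetilde\Gamma(\Si)^{++}$) and that the two expressions for the weight---one via the signature of the linking matrix $L^0_\lambda(\mathfrak w)$, the other via the form $\star_{f,\lambda}$---are being compared for the same mapping class $f=D(\mathfrak w)$.
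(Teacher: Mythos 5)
Your proposal is correct and is exactly the argument the paper has in mind: the paper states this corollary as "immediate," and the intended deduction is precisely yours — $W(\mathfrak w)\in\widetilde\Gamma(\Si)^{++}$ by Definition~\ref{ind4}, $W(\mathfrak w)=C(f,n_\lambda(\mathfrak w))$ by Corollary~\ref{new52}, and membership in $\widetilde\Gamma(\Si)^{++}=S$ (Corollary~\ref{hom-ind}) forces $n_\lambda(\mathfrak w)\equiv n_\la(f)\pmod 4$. No gap; nothing further is needed.
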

Thus, the group $\widetilde\Gamma(\Si)^{++}$ can also be described as
the subgroup of $\widetilde\Gamma(\Si)$ consisting of the
  $C(f, n)$ where $n\equiv n_\la(\mathfrak w)\pmod 4$, where
  $\mathfrak w$ is any word representing the mapping class $f\in
  \Gamma(\Si)$.

\begin{rem}{\em We briefly sketch a second way to see that
    $\widetilde\Gamma(\Si)^{++}$ is an index four subgroup of
    $\widetilde\Gamma(\Si)$. This second proof does not use
    Theorem~\ref{hom}, but uses a presentation of the mapping class
    group. We start again with the fact, shown in  Lemma~\ref{55}, that $W^4=C(\Id_\Si,4) \in
\widetilde\Gamma(\Si)^{++}$. It remains to show that if
  $W^n$ is in the kernel of the forgetful map
$\widetilde\Gamma(\Si)^{++}\rightarrow  \Gamma({  \Si})$, then
$n\equiv 0 \pmod 4$. Using Corollary~\ref{new52} and Lemma~\ref{relator}, we see that we need to show that
    $$e(\mathfrak w) +\sigma(L_\lambda(\mathfrak w))\equiv 0 \pmod 4$$
for every relator $\mathfrak w$ in a presentation of $\Gamma( {\Si})$  with
all Dehn twist as generators. This computation was done in a somewhat
different context in \cite[Proposition 3.4
(ii)]{MR}, the main difference being that there were no lagrangians in
\cite{MR}. But if $\mathfrak w$ is a
relator, then our  $\sigma(L_\lambda(\mathfrak w))$
is equal to the number $\sigma_b(\mathfrak w)$ defined in
\cite{MR}. The key to seeing this is to observe that 
if $\mathfrak w$ 
    is a relator, then $\sigma(L_\la(\mathfrak w))$ does not depend on the lagrangian
    $\la$. This can be seen as follows. If $\mathfrak w$
    is a relator, then $\sigma(L_\la(\mathfrak
    w))=\sigma(L_\la^0(\mathfrak w))$ is the weight of $C(\mathfrak w)$
    by equation 
    (\ref{weightC}) (proved in the proof of Theorem~\ref{new4.1}).
    Changing the lagrangian amounts to conjugating
    in the way explained in
    Remark~\ref{uni}. Since $C(\mathfrak w)$ has underlying manifold $\I\times
    \Si$, its weight is not changed by conjugating. 
}\end{rem}

\section{The index two subgroup $\widetilde\Gamma(\Si)^{+}$ of
  $\widetilde\Gamma(\Si)$}\label{newsec7}
The following corollary of Theorem~\ref{hom} is immediate.

\begin{cor}\label{2G} The set $$\{ C(f,n)\, | \, f\in\Gamma(\Si), \ n\equiv
n_\la(f) \pmod 2 \}$$
is an index two subgroup of $\widetilde\Gamma(\Si)$.
\end{cor}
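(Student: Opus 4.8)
The plan is to deduce this directly from Theorem~\ref{hom} by comparing the mod-two set with the mod-four set. Write $S_2$ for the set in the statement and $S_4$ for the index-four subgroup $\{C(f,n)\mid n\equiv n_\la(f)\pmod 4\}$ of Theorem~\ref{hom}. Since a congruence mod four implies the corresponding congruence mod two, one has $S_4\subseteq S_2$, so the whole problem reduces to understanding how much bigger $S_2$ is than $S_4$.

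First I would record how the central element $W=C(\Id_\Si,1)$ acts. From the multiplication formula (\ref{comp}), and because two of the three lagrangians occurring in the relevant Maslov index coincide (they are both $\la$), Lemma~\ref{2lag} gives $W^k\cdot C(f,n)=C(f,n+k)$ for all $k\in\BZ$; in particular $W$ is central and shifts the weight by one, and $W^k=C(\Id_\Si,k)$. I would also note that $n_\la(\Id_\Si)=0$, which is immediate from the defining formula in Theorem~\ref{hom} since $(f-1)H_1(\Si)=0$ when $f=\Id_\Si$. Consequently $W^k\in S_4$ exactly when $k\equiv 0\pmod 4$, while $W^k\in S_2$ exactly when $k\equiv 0\pmod 2$.

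Next I would show that $S_2=S_4\cup W^2S_4$. Given $C(f,n)\in S_2$, either $n\equiv n_\la(f)\pmod 4$, in which case $C(f,n)\in S_4$, or $n\equiv n_\la(f)+2\pmod 4$, in which case $C(f,n-2)\in S_4$ and hence $C(f,n)=W^2\cdot C(f,n-2)\in W^2S_4$. Since $W^2$ is central and $(W^2)^2=W^4\in S_4$, the set $S_4\cup W^2S_4$ is precisely the subgroup $\langle S_4,W^2\rangle$; in particular $S_2$ is a subgroup. Because $W^2\notin S_4$, the cosets $S_4$ and $W^2S_4$ are distinct, so $[S_2:S_4]=2$.

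Finally, multiplicativity of the index gives $[\widetilde\Gamma(\Si):S_2]=[\widetilde\Gamma(\Si):S_4]/[S_2:S_4]=4/2=2$, which is the assertion. I do not expect any real obstacle here — this is why the paper calls it immediate — the only points needing a line of justification are the weight-shift property of $W$ and the value $n_\la(\Id_\Si)=0$, both of which are direct consequences of definitions already in place.
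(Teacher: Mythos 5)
Your proposal is correct and takes essentially the same route as the paper, which simply declares the corollary immediate from Theorem~\ref{hom}: you deduce it from the statement of that theorem by showing the mod-two set equals $\langle S_4, W^2\rangle = S_4 \cup W^2S_4$ (where $S_4$ is the mod-four subgroup) and then applying multiplicativity of the index. The two supporting facts you verify along the way — that $W^k\cdot C(f,n)=C(f,n+k)$ by Lemma~\ref{2lag}, and that $n_\la(\Id_\Si)=0$ thanks to the paper's convention $\sgn[\det(\star_{\Id})]=1$ — are exactly the details the paper leaves to the reader, and both check out.
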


In the remainder of this section, we show that this subgroup is equal
to the extension $\widetilde\Gamma(\Si)^{+}$ constructed by one of us in
\cite{G}. It was defined as follows.

  \begin{de} \label{ind2} ( \cite{G}) Let $\widetilde\Gamma(\Si)^+$ be the subset of
  $\widetilde\Gamma(\Si)$ given as
\begin{equation} 
 \widetilde\Gamma(\Si)^+= \{  C(f, n )
\, | \,
 f \in  \Gamma(
 \Si) , \ n\equiv \genus(\Si) +\dim \bigl(       \lambda      \cap  f(\lambda     )
\bigr)\pmod{2} .\}
 \notag
\end{equation}  
 \end{de} 
It was shown in \cite{G} that $\widetilde\Gamma(\Si)^+$ is a subgroup of
$\widetilde\Gamma(\Si)$.  Our work allows one to give a new proof of
this fact (see Remark~\ref{3G}).  Note that $\widetilde\Gamma(\Si)^+$ has index two in
$\widetilde\Gamma(\Si)$.

 \begin{prop}
\label{mod2} For every $f$ and $\la$, we have
\begin{equation}
\notag
      \Signature(\star_{f,\lambda})
+ \dim((f-1)H_1(\Si)) \equiv 
\dim \la
       +\dim(\la \cap f      (\la))
\pmod{2}~.
\end{equation}
\end{prop}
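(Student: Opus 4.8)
The plan is to reduce the asserted congruence to a purely dimension-theoretic identity, exploiting the fact that the signature of a (possibly degenerate) symmetric bilinear form is congruent modulo two to its rank. Writing $R$ for the radical of $\star_{f,\lambda}$, its rank equals $\dim\bigl(\lambda\cap(f-1)H_1(\Si)\bigr)-\dim R$, so I would first record
\[
\Signature(\star_{f,\lambda}) \equiv \dim\bigl(\lambda\cap(f-1)H_1(\Si)\bigr) - \dim R \pmod 2,
\]
and then compute each of the two dimensions on the right explicitly in terms of $\dim\lambda$, $\dim\ker(f-1)$, $\dim\bigl(\lambda\cap f(\lambda)\bigr)$, and $\dim\bigl(\lambda\cap\ker(f-1)\bigr)$.

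The heart of the argument, and the step I expect to be the main obstacle, is the identification of the radical $R$. Here I would use the perp-calculus for the non-singular intersection form $\cdot$ (in particular $(A\cap B)^\perp=A^\perp+B^\perp$), together with the two facts $\lambda^\perp=\lambda$ (the lagrangian condition) and $\bigl((f-1)H_1(\Si)\bigr)^\perp=\ker(f-1)$ (established inside the proof of Lemma~\ref{starfdef}). An element $a=(f-1)x\in\lambda\cap(f-1)H_1(\Si)$ lies in $R$ exactly when $x\cdot b=0$ for all $b\in\lambda\cap(f-1)H_1(\Si)$, i.e. when $x\in\bigl(\lambda\cap(f-1)H_1(\Si)\bigr)^\perp=\lambda+\ker(f-1)$; since $\ker(f-1)$ is contained in this annihilator, the condition is independent of the choice of representative $x$. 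This should yield
\[
R=\lambda\cap(f-1)\lambda=(f-1)\bigl(\lambda\cap f^{-1}(\lambda)\bigr),
\]
and a short check, using that $\ker(f-1)\cap f^{-1}(\lambda)=\ker(f-1)\cap\lambda$ to compute the kernel of $f-1$ restricted to $\lambda\cap f^{-1}(\lambda)$, gives $\dim R=\dim\bigl(\lambda\cap f(\lambda)\bigr)-\dim\bigl(\lambda\cap\ker(f-1)\bigr)$, where I also use $\dim\bigl(\lambda\cap f^{-1}(\lambda)\bigr)=\dim\bigl(\lambda\cap f(\lambda)\bigr)$ (apply the isomorphism $f$).

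Finally I would compute the other term by the same perp-calculus: from $\bigl(\lambda\cap(f-1)H_1(\Si)\bigr)^\perp=\lambda+\ker(f-1)$ and $\dim\lambda=\tfrac12\dim H_1(\Si)$ one gets
\[
\dim\bigl(\lambda\cap(f-1)H_1(\Si)\bigr)=\dim\lambda-\dim\ker(f-1)+\dim\bigl(\lambda\cap\ker(f-1)\bigr).
\]
Subtracting $\dim R$ gives the rank of $\star_{f,\lambda}$, and combining it with $\dim(f-1)H_1(\Si)=\dim H_1(\Si)-\dim\ker(f-1)$ one finds that every term involving $\ker(f-1)$ either cancels or carries an even coefficient. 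Reducing modulo two then leaves exactly $\dim\lambda+\dim\bigl(\lambda\cap f(\lambda)\bigr)$, which is the right-hand side of the proposition. The only genuinely delicate points are getting the radical identification right and bookkeeping the $\ker(f-1)$-intersection terms; everything else is routine linear algebra over $\BQ$.
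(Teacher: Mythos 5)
Your proof is correct and follows essentially the same route as the paper's: reduce the signature to the rank modulo two, identify the radical of $\star_{f,\lambda}$ as $\la\cap(f-1)(\la)$ via the perp-calculus identity $\bigl(\la\cap(f-1)H_1(\Si)\bigr)^\perp=\la+\ker(f-1)$, compute its dimension by rank-nullity (the paper's exact sequence $0\to\la\cap\ker(f-1)\to\la\cap f(\la)\to\la\cap(f-1)(\la)\to 0$ is exactly your restriction argument conjugated by $f$), and finish with dimension bookkeeping. The only cosmetic difference is in the final parity count, where the paper uses $\la\cap\ker(f-1)=\bigl(\la+(f-1)H_1(\Si)\bigr)^\perp$ while you invoke $\dim\la=\tfrac12\dim H_1(\Si)$ explicitly; both are routine.
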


      \begin{proof}
Let us write $V=(f-1)H_1(\Si)$ and $E=\ker(f-1)$. As
    shown in the proof of Lemma~\ref{starfdef}, we have $V=E^\perp$
    with respect to the form $\cdot$ on $H_1(\Si)$. The domain of
    definition of the form $\star_{f,\lambda}$ is $\la \cap V$.  
The radical of the form $\star_{f,\lambda}$ is given
    by 
\begin{equation}\label{s1}
\rad(\star_{f,\lambda})= \la \cap (f-1)(\la)~.
\end{equation} 
To see this, notice that an
    element  $a=(f-1)(x)\in
    \la\cap V$ is in the radical of $\star_{f,\lambda}$ 
if and only if
$x\cdot b=0$ for all $b\in
    \la\cap V$. Thus $$\rad(\star_{f,\lambda})= \la \cap (f-1)\bigl(
    (\la \cap V)^\perp \bigr)~.$$ But $(\la \cap V)^\perp= \la^\perp +
    V^\perp= \la + E$ and $(f-1)(E)=0$. This proves (\ref{s1}).
 Now observe that 
\begin{equation} \label{s2}
\Signature(\star_{f,\lambda}) 
\equiv \rank (\star_{f,\lambda}) 
 =  \dim
(\la\cap V) - \dim \rad(\star_{f,\lambda}) \pmod 2~.
\end{equation} Using (\ref{s1}), the exact sequence
$$ \begin{CD}
 0 @>>> \la \cap E  @>>> \la \cap f(\la) @>{1-f^{-1}}>>\la
 \cap (f-1)(\la)@>>> 0 
\end{CD}$$ shows that 
\begin{equation}\label{s3}
\dim \rad(\star_{f,\lambda})= \dim (\la \cap
f(\la)) - \dim (\la \cap E)~.
\end{equation} But $\la \cap E= \la^\perp \cap
V^\perp=(\la+V)^\perp$, hence 
\begin{equation}\label{s4}
\dim(\la \cap E) \equiv \dim (\la+V)
\pmod 2~.
\end{equation} Putting together (\ref{s2}), (\ref{s3}), and
(\ref{s4}), we have 
$$ 
\Signature(\star_{f,\lambda})  
\equiv \dim
(\la\cap V) + \dim (\la \cap
f(\la)) + \dim (\la+V) \pmod 2~.$$ This implies  
Proposition~\ref{mod2}  
because of the equality 
\begin{equation*} 
\dim
(\la\cap V)+\dim (\la+V)= \dim V + \dim \la~. 
\qedhere
\end{equation*}
\end{proof}

\begin{cor}\label{alt} We have
     \begin{align}\notag
 \widetilde\Gamma(\Si)^+&= \{  C(f, n ) \, | \,  f \in  \Gamma(\Si) , \
 n\equiv 
\Signature(\star_{f,\lambda})  
+ \dim((f-1)H_1(\Si))\pmod{2}~\}\\
\notag
& =\{ C(f,n)\, | \, f\in\Gamma(\Si), \ n\equiv
n_\la(f) \pmod 2 \}
\end{align}
 \end{cor}

\begin{proof} This follows immediately from the 
      proposition, the fact that $\genus(\Si)=\dim \lambda$,  
and the
  definition of $n_\la(f)$.\end{proof}

\begin{rem}\label{3G}{\em Together with Corollary~\ref{2G}, this provides a new proof of the fact that the
    subset of $\widetilde\Gamma(\Si)$ defined in Definition~\ref{ind2} is a
    group. The original proof of this fact in \cite{G} was to
    construct an index two subcategory (called the {\em even subcategory} in
    \cite{G})
    of the extended cobordism category $\cC$ and to show that the
    intersection of  $\widetilde\Gamma(\Si)$ with this subcategory is precisely
    the subset given in Definition~\ref{ind2}. 
We don't know whether
    there exists an index four subcategory of $\cC$ whose
    intersection with $\widetilde\Gamma(\Si)$ is the group
    $\widetilde\Gamma(\Si)^{++}$.
}\end{rem}

\begin{rem}\label{new76}{\em Summarizing, we have the following  commutative diagram with exact
rows, where the vertical maps are all inclusions:
\[
 \begin{CD}
 0 @>>> 4\BZ @>>> \widetilde\Gamma(\Si)^{++} @>>>  \Gamma({  \Si})@>>> 1    \\
 @.          @VVV         @VVV                                     @|           
          @.   \\
0 @>>> 2\BZ @>>>  \widetilde\Gamma(\Si)^+  @>>>  \Gamma({  \Si})@>>> 1    \\
@.          @VVV         @VVV                                     @|            
         @.   \\
0 @>>> \BZ @>>> \widetilde \Gamma(\Si)  @>>>  \Gamma({  \Si})  @>>> 1 .   
\end{CD}
\] 

}\end{rem}

\section{Proof of Theorem~\ref{hom}.} \label{sec5b}
We begin the proof with some preliminary material. 
The set-theoretical section  $f \mapsto C(f,0)$ of $\widetilde \Gamma({\Si}) \rightarrow \Gamma({  \Si})$ can be used to define a  $2$-cocycle  on
$\Gamma({  \Si})$ whose cohomology class in $H^2(
\Gamma({  \Si}); \BZ)$ classifies the extension. The
extension and thus the cocycle depend  on $\la(\Si)$, which we are
denoting by $\la$. We will denote the cocycle by  $m_{\la}$ and call
it the {\em Maslov cocycle}.  In general, the $2$-cocycle defined by a section $s$ is given by $(g,f)\mapsto s(g)s(f)s(g\circ f)^{-1}$. 
By (\ref{comp}), 
we have that
\begin{equation}\label{ml1}
m_{\la}(g,f)= \mu_\Si\left( \la ,g (\la) , 
        (g \circ f)(\la) 
\right)=
-\mu_\Si\left( 
        ( g \circ f)(\la)
,g (\la) ,\la  \right)~.
\end{equation}

Next, we 
recall the well-known  4-manifold interpretation of the
Maslov index $\mu_\Si(\la_1,\la_2,\la_3)$ of three lagrangians. See
for example \cite[Section 12]{CLM}. Let $\mathcal{H}_{i}$ denote a
handlebody with boundary  $\Si$ such that $\la_i$ is the kernel of the
map $H_1(\Si) \rightarrow H_1( \mathcal{H}_{i})$  induced by
inclusion. Consider the 4-manifold  
$U(\mathcal{H}_{1},\mathcal{H}_{2},\mathcal{H}_{3})$ 
obtained by gluing   to $D^2 \times \Si$  (with the product
orientation using the standard orientation on $D^2$) three thickened
handlebodies 
$ \I \times \mathcal{H}_{i}$,
in the cyclic order indicated in Figure~\ref{u}. 

\begin{figure}[h]
\includegraphics[height=1.2in] {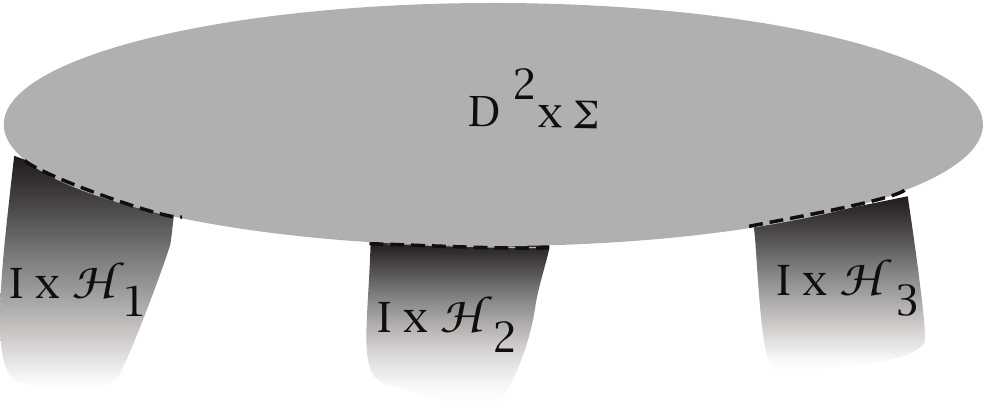}
\caption{A picture of 
$U(\mathcal{H}_{1},\mathcal{H}_{2},\mathcal{H}_{3}).$ 
The large oval disk represents $D^2 \times \Si$. The three dotted
lines represent copies of $\I \times \Si$ along which are glued the
thickened  handlebodies $ \I \times\mathcal{H}_i$. The boundary of 
$U(\mathcal{H}_{1},\mathcal{H}_{2},\mathcal{H}_{3})$ 
has three connected components. To indicate this, the thickened handlebodies    are drawn fading away. }
\label{u}
\end{figure}

By the Wall non-additivity formula, 
 the signature of the intersection 
 form
on the 4-manifold $U(\mathcal{H}_{1},\mathcal{H}_{2},\mathcal{H}_{3})$
 is given by 
the Maslov index of the three lagrangians:
\begin{equation}\label{ml2}
\Signature (
U(\mathcal{H}_{1},\mathcal{H}_{2},\mathcal{H}_{3}))
=\mu_\Si(\la_1,\la_2,\la_3)~.
\end{equation}
Note that in contrast with (\ref{weightunion}),  there is no
minus sign 
in (\ref{ml2}).

For the rest of this section,  $\BH$
will be a 
fixed
handlebody with  $\partial \BH= \Si$ such that the kernel of
$H_1(\Si) \rightarrow H_1(\BH)$ is the given lagrangian 
$\lambda=\lambda(\Si)$.
For $f\in\Gamma(\Si)$, we let $\BH_f$ denote the handlebody $\BH$ but
with boundary identified with $\Si$ through
$f^{-1}:\Si\rightarrow\Si=\partial \BH$; we therefore have  $$
\ker (H_1(\Si) \rightarrow H_1(\BH_f))=
f(\lambda)~.$$

\begin{prop} 
$m_{\la}(g,f) = 
 - \Signature (U(\BH_{g\circ f}, \BH_g, \BH) )~.$
\end{prop}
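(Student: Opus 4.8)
The plan is to combine the explicit formula (\ref{ml1}) for the Maslov cocycle with the 4-manifold interpretation of the Maslov index in (\ref{ml2}), using the dictionary between lagrangians and handlebodies established just before the statement. Recall that $\BH_f$ is defined so that $\ker(H_1(\Si)\rightarrow H_1(\BH_f))=f(\la)$; in particular $\BH=\BH_{\Id}$ has associated lagrangian $\la$, and $\BH_g$ has associated lagrangian $g(\la)$, while $\BH_{g\circ f}$ has associated lagrangian $(g\circ f)(\la)$.

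First I would apply (\ref{ml1}) in its second form, which reads
\begin{equation}
\notag
m_{\la}(g,f)=-\mu_\Si\bigl((g\circ f)(\la),\,g(\la),\,\la\bigr)~.
\end{equation}
Next I would invoke (\ref{ml2}) with the ordered triple of lagrangians $\la_1=(g\circ f)(\la)$, $\la_2=g(\la)$, $\la_3=\la$. Matching these with the handlebodies $\mathcal H_1=\BH_{g\circ f}$, $\mathcal H_2=\BH_g$, $\mathcal H_3=\BH$ (each chosen so that its lagrangian is precisely the kernel of inclusion, exactly as required in the setup for (\ref{ml2})), formula (\ref{ml2}) gives
\begin{equation}
\notag
\mu_\Si\bigl((g\circ f)(\la),\,g(\la),\,\la\bigr)=\Signature\bigl(U(\BH_{g\circ f},\BH_g,\BH)\bigr)~.
\end{equation}
Combining the two displays yields the claimed identity $m_{\la}(g,f)=-\Signature(U(\BH_{g\circ f},\BH_g,\BH))$.

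The argument is therefore essentially a substitution: the real content has already been done in establishing (\ref{ml1}) and (\ref{ml2}). The one point that requires care — and which I expect to be the main obstacle — is the bookkeeping of the \emph{order} of the three lagrangians. Since the Maslov index changes sign under an odd permutation (Lemma~\ref{2lag}), one must be sure to feed the triple into (\ref{ml2}) in the same cyclic order in which the thickened handlebodies $\I\times\mathcal H_i$ are glued around $D^2\times\Si$ in Figure~\ref{u}. I would check that the form of (\ref{ml1}) chosen above, together with the convention for $U$, produces matching orders so that the single minus sign coming from (\ref{ml1}) is the \emph{only} sign discrepancy, and no additional sign from a transposition of lagrangians creeps in. Getting this sign exactly right is important, since the entire paper is concerned with a congruence modulo four rather than modulo two.
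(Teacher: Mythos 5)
Your proof is correct and is essentially the paper's own argument: the paper's proof of this proposition is exactly the one-line observation that it follows from (\ref{ml1}) and (\ref{ml2}), which is the substitution you carry out. Your extra care with the ordering of the lagrangians and the single sign from the odd permutation is exactly the right bookkeeping, and it matches the paper's conventions.
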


\begin{proof} This follows from (\ref{ml1}) and (\ref{ml2}). \end{proof}

 One can obtain  
$U(\BH_{g\circ f}, \BH_g, \BH)$ from $U(\BH, \BH, \BH)$
by cutting along two  arcs running across  the disk  and regluing using $f$ and $g$ as in Figure \ref{f}.
 In this figure (and in similar figures below)  the arrow labelled  $f$ indicates the direction of gluing: a point $x$ on the boundary component corresponding to the tail side of the arrow is  glued to  the point $f(x)$ on the boundary component corresponding to the head of the arrow.

\begin{figure}[h]
\includegraphics[height=1.2in]{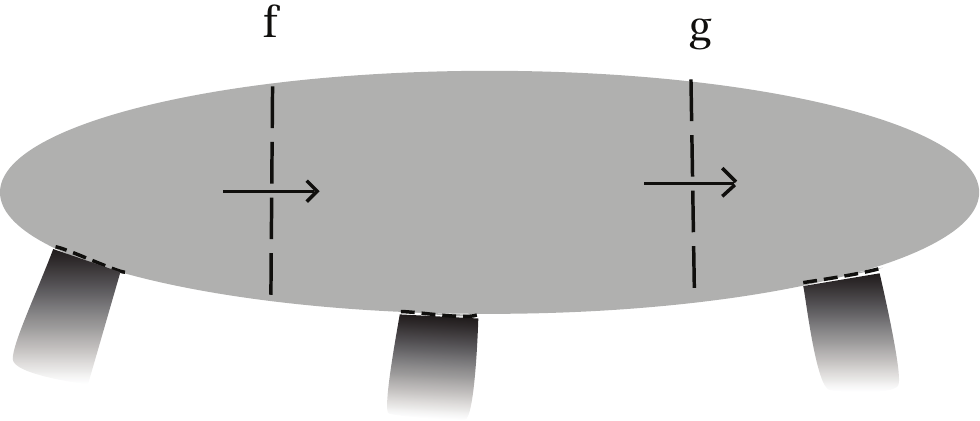}
\caption{ The three thickened handlebodies $\I \times \BH$  are
  attached first to form 
$U(\BH, \BH, \BH)$.
 Then  the
  manifold 
$U(\BH, \BH, \BH)$
is cut along the dotted seams
  and reglued  using $f$ and $g$. The result is oriented diffeomorphic
  to 
      $U(\BH_{g\circ f}, \BH_g, \BH)$        .}
\label{f}
\end{figure}

 The boundary of 
$U(\BH_{g\circ f}, \BH_g, \BH)$
consists of three
 {\em Heegaard manifolds} defined as follows. 
 The Heegaard manifold of $f \in \Gamma({  \Si})$ 
relative to the handlebody $\BH$, 
 denoted
 $\Heeg(f)$, is the quotient space  
$(\BH \sqcup \overline \BH)/\sim$  
where
 $\sim$ is the equivalence relation given by identifying  $x \in
 \partial \BH$ with    
$f(x) \in \partial \overline \BH$. 
Here, $\BH$ is
 oriented so that it induces the  given orientation on $\Si$. Using
 that  
 $\Heeg(f^{-1})=\overline{\Heeg(f)}$, 
one easily checks the 
following:
\begin{prop}   The   oriented boundary of 
 $U(\BH_{g\circ f}, \BH_g, \BH)$
is    
$$\overline{\Heeg(g \circ f)} \sqcup  {\Heeg(f)} \sqcup  {\Heeg(g)}~.$$ 
\end{prop}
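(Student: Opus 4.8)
The plan is to identify the oriented boundary of the cut-and-reglue 4-manifold $U(\BH_{g\circ f}, \BH_g, \BH)$ component by component, using the description in Figure~\ref{f}. The key structural fact is that $U(\BH_1,\BH_2,\BH_3)$ is built from $D^2\times\Si$ by attaching three thickened handlebodies $\I\times\mathcal{H}_i$ along three copies of $\I\times\Si$ arranged in cyclic order; its boundary therefore has three connected components, each obtained by gluing together two \emph{adjacent} handlebodies across the portion of $\Si$ between two consecutive attaching regions. First I would make precise, from the picture, which handlebodies are adjacent and with what gluing map, keeping careful track of orientations: the boundary component lying between the $\BH_{g\circ f}$-region and the $\BH_g$-region is a Heegaard-type manifold built from these two handlebodies, and similarly for the other two pairs.

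Next I would translate the gluing maps recorded by the arrows in Figure~\ref{f} into the identification relation defining each $\Heeg(\cdot)$. Recall $\BH_f$ has its boundary identified with $\Si$ through $f^{-1}$, so the kernel lagrangian is $f(\la)$. The definition $\Heeg(h)=(\BH\sqcup\overline\BH)/{\sim}$ glues $x\in\partial\BH$ to $h(x)\in\partial\overline\BH$. For each adjacent pair I would compose the two boundary-identification maps (one of which contributes an $f^{-1}$ or $g^{-1}$, the other possibly a twist by $f$ or $g$ per the arrows) to read off the effective gluing map $h$, so that the boundary piece is recognized as $\Heeg(h)$ for the appropriate $h\in\{f,\,g,\,g\circ f\}$. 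I expect the three pieces to come out as $\Heeg(f)$, $\Heeg(g)$, and (for the outermost/longest piece that closes the cycle) $\Heeg(g\circ f)$.

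The orientation bookkeeping is where I expect the real work to lie, and it is the main obstacle: I must determine the sign with which each boundary component inherits its orientation from the product orientation on $D^2\times\Si$ (standard orientation on $D^2$). The cyclic arrangement forces exactly one of the three pieces to appear with reversed orientation relative to the others. Here I would exploit the stated identity $\Heeg(h^{-1})=\overline{\Heeg(h)}$, together with the fact that orientation reversal of a Heegaard manifold corresponds to swapping the roles of $\BH$ and $\overline\BH$, to convert a negatively-oriented piece into the overline form. This is precisely how the $g\circ f$ term should acquire its overbar, yielding $\overline{\Heeg(g\circ f)}$ while the other two remain $\Heeg(f)$ and $\Heeg(g)$.

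Finally I would assemble the three identified pieces into the disjoint union $\overline{\Heeg(g\circ f)}\sqcup\Heeg(f)\sqcup\Heeg(g)$, which is the asserted oriented boundary. The phrase ``one easily checks the following'' in the excerpt suggests the authors regard the combinatorics as routine once the orientation convention of Figure~\ref{f} and the relation $\Heeg(f^{-1})=\overline{\Heeg(f)}$ are in hand; my proof plan is simply to spell out that check, with the sole genuine care being the consistent propagation of orientations through the cut-and-reglue operation so that exactly the $g\circ f$ factor is reversed.
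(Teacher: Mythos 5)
Your proposal is correct and coincides with the paper's approach: the paper offers no written argument beyond the remark that, given the cut-and-reglue description of Figure~\ref{f} and the identity $\Heeg(f^{-1})=\overline{\Heeg(f)}$, ``one easily checks'' the claim, and your plan is precisely that check spelled out (identify each boundary component as two adjacent handlebody ends joined across a collar, compose the boundary identifications $(g\circ f)^{-1}$, $g^{-1}$, $\Id$ to read off the effective gluing maps, then convert the inverted one using the orientation-reversal identity). One small sharpening of your orientation discussion: all three boundary components are symmetric with respect to the induced orientation (each consists of one positively and one negatively oriented handlebody end), and the overbar lands on the $g\circ f$ term not because that component is ``reversed relative to the others'' but because the three effective gluing maps, read cyclically, must compose to the identity, so once two of them are $f$ and $g$ the third is forced to be $(g\circ f)^{-1}$, whence that component is $\Heeg((g\circ f)^{-1})=\overline{\Heeg(g\circ f)}$.
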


As we construct further oriented 4-manifolds that we will use as
 building blocks, we will continue to pay close attention to which
 3-manifolds form their oriented boundaries.  This will make it easier
 in the proof of Theorem \ref{ctd} to see that a certain collection of
 4-manifolds fit together to form the boundary of a 5-manifold.

Meyer \cite{Me} defined  a 2-cocycle for  $\Gamma(  \Si)$,
now called the {\em signature cocycle.} 
We denote
this cocycle by $\tau$. This cocycle  does not require a choice of a lagrangian.
In fact, 
\begin{equation} 
\notag
\tau(f,g)= 
\Signature (W(f,g))
\end{equation} where $W(f,g)$ is the 4-manifold 
which fibers over a two-holed disk, with fiber $\Si$,  and whose monodromy around the two holes is given by $f$ and $g$. See Figure \ref{ss} and \cite[4.1]{A}.
Note that $W(f,g)  
\simeq
W(g,f)$, and so $\tau(f,g)=\tau(g,f).$

\begin{figure}[h]
\includegraphics[height=1.2in]{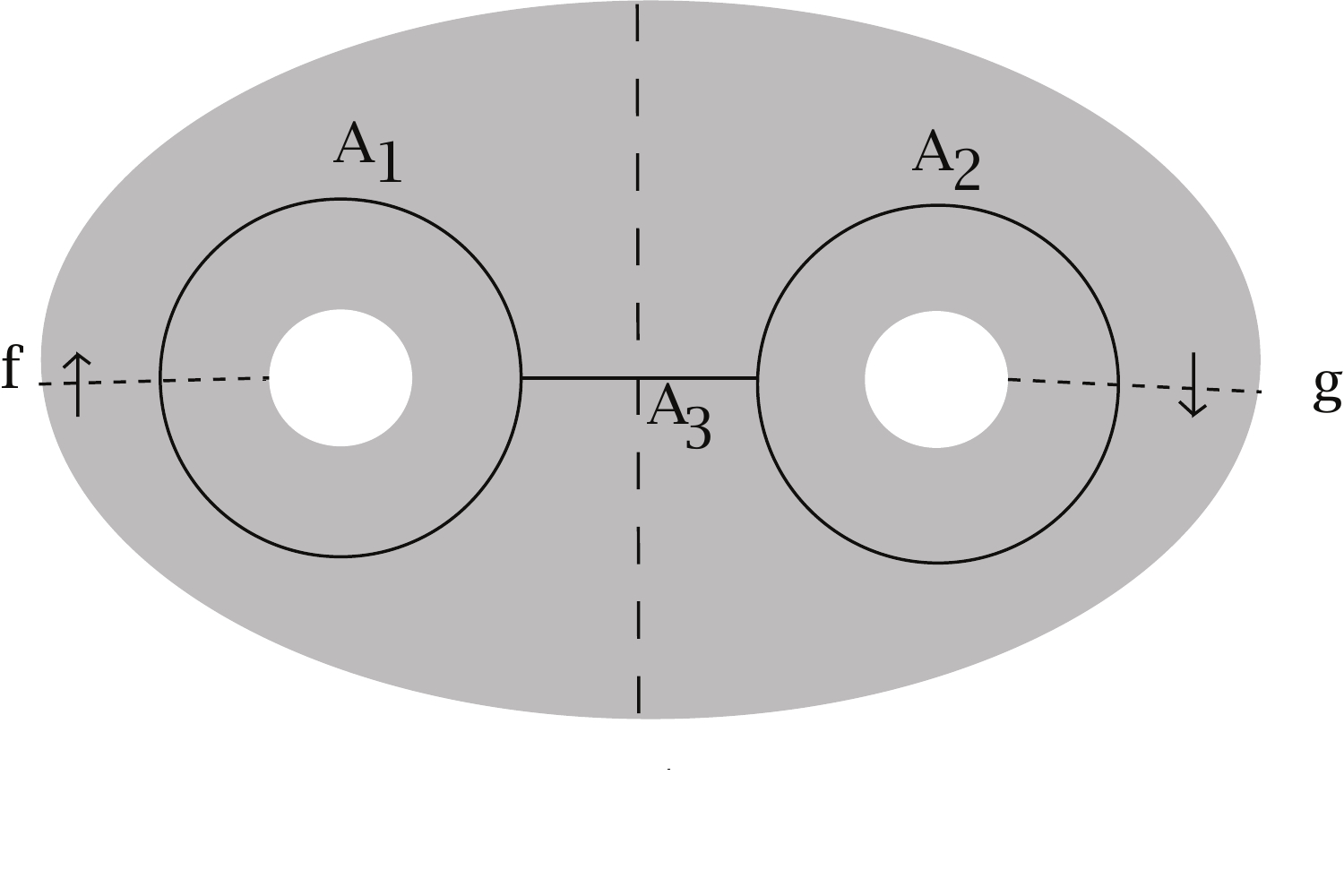}
\caption{A picture of $W(f,g)$. It is the result of cutting  a two-holed disk  times $\Si$ along the two seams $\I \times \Si$ given by the horizontal dotted lines and regluing by $f$ or $g$, as indicated. The solid lines labelled $A_1$, $A_2$ and $A_3$ indicate 2-chains that will be used  in the proof of Proposition \ref{fgint}. The vertical dotted line will also be used in the proof of this proposition.}
\label{ss}
\end{figure}

The boundary of Meyer's manifold $W(f,g)$ consists of three {\em
  mapping tori.} Here, the mapping torus of $f \in \Gamma({ 
  \Si})$, denoted  $T(f)$, is the quotient space 
$(\I \times {  \Si} )/ \sim$ 
where $\sim$ is the equivalence relation
generated by $(1,x)\sim(0,f(x))$.  One easily checks the following
\begin{prop}  As an oriented manifold, the boundary of $W(f,g)$ is 
$$\overline{T(g\circ f)}\sqcup {T(f)} \sqcup { T(g)}~.$$
\end{prop}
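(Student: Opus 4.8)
The plan is to exploit the fibered structure of $W(f,g)$ directly. Let $P$ denote the two-holed disk over which $W(f,g)$ fibers with fiber $\Si$, and let $p\colon W(f,g)\to P$ be the projection. Since the fiber $\Si$ is closed, the boundary of the total space is the restriction of the bundle to the boundary of the base: $\partial W(f,g)=p^{-1}(\partial P)$. The boundary $\partial P$ consists of three circles---the two small circles around the holes and the large outer circle---so $\partial W(f,g)$ is the disjoint union of three $\Si$-bundles over $S^1$, each of which is the mapping torus of the monodromy of $W(f,g)$ around the corresponding circle.

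First I would identify the three monodromies. By the construction of $W(f,g)$ recalled in Figure~\ref{ss} (cutting $P\times\Si$ along two copies of $\I\times\Si$ and regluing by $f$ and $g$), parallel transport around the two inner circles is given by $f$ and $g$, so these two boundary components are $T(f)$ and $T(g)$. For the outer circle, I would use that in $P$ the outer boundary loop is freely homotopic to the concatenation of the two inner loops; hence the monodromy around it is the composite $g\circ f$ (the order matching the convention ``first $f$, then $g$'' fixed earlier), and this boundary component is a mapping torus of $g\circ f$.

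Next I would settle the orientations, which is the delicate point. Orienting $W(f,g)$ by the product orientation on $P\times\Si$ (using the standard orientation of $P\subset\BR^2$), the boundary orientation on $\partial P$ gives the outer circle and the two inner circles opposite orientations as loops in the plane. Consequently the outer mapping torus acquires the reversed fiber-bundle orientation relative to the inner ones, which is precisely what produces the overline: the outer component is $\overline{T(g\circ f)}$ while the inner components are $T(f)$ and $T(g)$ with their standard orientations. This yields the asserted oriented boundary $\overline{T(g\circ f)}\sqcup T(f)\sqcup T(g)$.

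The main obstacle will be making the orientation bookkeeping unambiguous---in particular verifying that the overline lands on the composite factor $T(g\circ f)$ and not on the two factors $T(f)$ and $T(g)$, and that the monodromy composes as $g\circ f$ rather than $f\circ g$. Both points are forced by the conventions already fixed (the orientation of $D^2$ and the composition rule for mapping cylinders), and the computation is entirely parallel to the preceding proposition identifying the boundary of $U(\BH_{g\circ f},\BH_g,\BH)$ as $\overline{\Heeg(g\circ f)}\sqcup\Heeg(f)\sqcup\Heeg(g)$; I would check consistency against that case.
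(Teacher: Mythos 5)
Your proposal is correct, and it is essentially the intended argument: the paper offers no proof at all (it states the proposition after ``One easily checks the following''), and your verification via the fibered structure --- boundary of the total space equals the bundle restricted to $\partial P$, monodromies $f$, $g$, and $g\circ f$ around the three circles, with the overline arising because the outer boundary circle of $P$ carries the opposite boundary orientation to the two inner ones --- is exactly that easy check. Your closing caveat is also well placed: the assignment of the overline to the composite factor (rather than to $T(f)$ and $T(g)$) is pinned down by the regluing arrows in Figure~\ref{ss}, a convention the paper itself flags as essential in its footnote contrasting its signs with Walker's.
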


Turaev \cite{T2,T3} independently defined and studied a cocycle
$\varphi$ which turns out to be equal to $-\tau$ (see
Proposition~\ref{fgint} below). 
Meyer defined $\tau$ as a cocycle for $\Sp(g(\Si),\BZ)$, and Turaev
considered his cocycle as a cocycle for the symplectic group
$\Sp(g(\Si),\BR)$, but for our purposes, we just consider it as a
$2$-cocycle for $\Gamma({  \Si})$. 
Turaev
modeled the construction of $W(f,g)$ algebraically and defined  
$$\varphi(f,g)=\Signature(\star_{f,g})$$ where 
the symmetric
bilinear form $\star_{f,g}$ 
is given by the following  
\begin{prop} (Turaev) If $V$ is a rational vector space with a nonsingular skew symmetric inner product $\cdot$ and $f$ and $g$ are automorphisms which  preserve $\cdot$, then
\begin{equation} \label{tuf}  
a \star_{f,g} b =\left  (    (f-1)^{-1}a+ (g-1)^{-1} a +a \right) \cdot b 
\end{equation} defines a symmetric bilinear form on $(f-1)V \cap (g-1)V.$
\end{prop}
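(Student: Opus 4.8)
The plan is to prove that $\star_{f,g}$ as defined in $(\ref{tuf})$ is a well-defined symmetric bilinear form on $(f-1)V\cap(g-1)V$. Bilinearity is automatic once well-definedness is established, so there are really two things to verify: that the formula does not depend on the choices of preimages under $f-1$ and $g-1$, and that the resulting form is symmetric. I would organize the argument by recording first the single identity that drives everything, exactly as in the proof of Lemma~\ref{starfdef}: since $f$ preserves the intersection form $\cdot$, one has for all $a,b\in V$ the relation $(f-1)a\cdot b=-f(a)\cdot(f-1)b$, and likewise for $g$. Equivalently, on the kernel of $f-1$ one has $f(x)=x$, so that $x\cdot(f-1)(y)=0$ for any $x\in\ker(f-1)$ and any $y$; the same holds for $g$.

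\emph{Well-definedness.} A term like $(f-1)^{-1}a$ is only defined up to addition of an element of $\ker(f-1)$, and similarly for $(g-1)^{-1}a$. So I would check that each ambiguity pairs trivially with $b$. Write $b=(f-1)(y)=(g-1)(z)$, using that $b$ lies in \emph{both} $(f-1)V$ and $(g-1)V$; this is the one place where the hypothesis $b\in(f-1)V\cap(g-1)V$ is genuinely used. If $(f-1)^{-1}a$ is altered by $x\in\ker(f-1)$, the change in $a\star_{f,g}b$ is $x\cdot b=x\cdot(f-1)(y)$, which vanishes by the kernel identity above. If $(g-1)^{-1}a$ is altered by $w\in\ker(g-1)$, the change is $w\cdot b=w\cdot(g-1)(z)=0$ by the analogous identity for $g$. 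Thus the value is independent of all choices, and the form is well-defined (and bilinear, since the defining expression is manifestly linear in each of the chosen representatives and in $b$).

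\emph{Symmetry.} Here I would compute $a\star_{f,g}b-b\star_{f,g}a$ directly. Write $a=(f-1)(x_1)=(g-1)(x_2)$ and $b=(f-1)(y_1)=(g-1)(y_2)$, which is legitimate since both $a$ and $b$ lie in $(f-1)V\cap(g-1)V$. Then $(f-1)^{-1}a$ may be taken to be $x_1$, $(g-1)^{-1}a$ to be $x_2$, and similarly $y_1,y_2$ for $b$, so that
\begin{equation*}
a\star_{f,g}b=(x_1+x_2+a)\cdot b,\qquad b\star_{f,g}a=(y_1+y_2+b)\cdot a~.
\end{equation*}
Expanding both and substituting $a=(f-1)(x_1)$, $b=(f-1)(y_1)$ into the $x_1,y_1$ terms (and $a=(g-1)(x_2)$, $b=(g-1)(y_2)$ into the $x_2,y_2$ terms), each cross-pairing such as $x_1\cdot(f-1)(y_1)$ can be rewritten using the identity $(f-1)(x_1)\cdot(f-1)(y_1)=x_1\cdot(f-1)(y_1)+(f-1)(x_1)\cdot y_1$ coming from $f$-invariance of $\cdot$. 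The antisymmetry $a\cdot b=-b\cdot a$ of the intersection form then makes the various contributions cancel in pairs, leaving $a\star_{f,g}b=b\star_{f,g}a$.

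The main obstacle, such as it is, is purely bookkeeping: keeping the two systems of preimages $(x_1,x_2)$ and $(y_1,y_2)$ straight and applying the correct $f$- or $g$-invariance identity to each cross term. There is no conceptual difficulty beyond the observation, already exploited in Lemma~\ref{starfdef}, that elements of $\ker(f-1)$ are orthogonal to $(f-1)V$ under $\cdot$; the only genuinely new input is that the common domain $(f-1)V\cap(g-1)V$ lets us choose preimages of $b$ (and of $a$) under \emph{both} $f-1$ and $g-1$ simultaneously, which is precisely what is needed to make the mixed $f$/$g$ terms cancel and the form come out symmetric.
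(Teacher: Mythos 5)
Your route is genuinely different from the paper's: the paper never verifies symmetry of $\star_{f,g}$ algebraically. It attributes the proposition to Turaev and obtains symmetry as a by-product of Proposition~\ref{fgint}, which identifies $-\star_{f,g}$ with the (automatically symmetric) intersection form on $H_2(W(f,g))$ modulo part of its radical; the paper even records this in a remark ("a topological proof that the form $\star_{f,g}$ is symmetric"). Your well-definedness argument is correct and is exactly the mechanism of Lemma~\ref{starfdef}: an ambiguity $x\in\ker(f-1)$ pairs with $b=(f-1)(y)$ as $x\cdot b=f(x)\cdot f(y)-x\cdot y=0$, likewise for $g$, and you rightly point out that the hypothesis $b\in(f-1)V\cap(g-1)V$ is what allows preimages under both $f-1$ and $g-1$.

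However, the key identity you quote for the symmetry step is false, and with it the cancellation you assert does not happen. From $f(u)\cdot f(v)=u\cdot v$ one gets
\[
(f-1)u\cdot (f-1)v \;=\; 2\,u\cdot v - f(u)\cdot v - u\cdot f(v)\;=\;-\bigl[(f-1)u\cdot v + u\cdot(f-1)v\bigr],
\]
i.e.\ there is a minus sign on the right, not the plus sign you wrote. Concretely, with $a=(f-1)x_1=(g-1)x_2$ and $b=(f-1)y_1=(g-1)y_2$, the correct identity gives $x_i\cdot b - y_i\cdot a = -\,a\cdot b$ for $i=1,2$, while the extra term ``$+a$'' in (\ref{tuf}) contributes $a\cdot b - b\cdot a = 2\,a\cdot b$, so that
\[
a\star_{f,g}b - b\star_{f,g}a \;=\; (-a\cdot b)+(-a\cdot b)+2\,a\cdot b\;=\;0,
\]
as desired. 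Had your identity (with the plus sign) been true, the same bookkeeping would give $x_i\cdot b - y_i\cdot a = +\,a\cdot b$ and a total of $4\,a\cdot b\neq 0$ in general; you would have ``proved'' the form is \emph{not} symmetric. So the sign is not cosmetic: it is precisely what makes the ``$+a$'' term in Turaev's formula cancel the two mixed contributions. Since your write-up never carries out the expansion but only asserts that the terms ``cancel in pairs,'' the proof as written fails at its crux — though it is repaired by flipping that one sign and running the three-line computation above.
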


(Turaev's  result actually dealt  with real vector spaces.) Turaev
then proceeded to study $\varphi$ algebraically.   As
Turaev used $W(f,g)$ only for motivation or inspiration, he  did not
need to include a proof of the following proposition, which he must
have known.

\begin{prop}\label{fgint} If $f,g \in \Gamma({  \Si})$, the
  intersection form on $H_2(W(f,g))$    divided by part of its radical
  is isomorphic to {\em minus} the form 
$\star_{f,g}$ on $(f-1)H_1( \Si) \cap (g-1)H_1( \Si)$ defined in
(\ref{tuf}).  In  particular 
$$\tau(f,g)= 
\Signature(W(f,g))= - \Signature(\star_{f,g})
=-\varphi(f,g)~.$$
\end{prop}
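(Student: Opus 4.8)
The plan is to compute the intersection form on $H_2(W(f,g))$ directly, using explicit $2$-cycles built from the product structure of Meyer's manifold, rather than through a spectral sequence (which would identify the relevant quotient of $H_2$ as a vector space but not readily the form or its signs). Recall that $W(f,g)$ is assembled from three blocks of the form (region in the base pair of pants)$\times\Si$, glued along seams $\I\times\Si$ by $f$, by $g$, and by the identity. The basic $2$-chains are products $A_i=\gamma_i\times c$, where $\gamma_i$ is an arc in the base and $c$ is a $1$-cycle in the fiber $\Si$; these are the chains $A_1,A_2,A_3$ drawn in Figure~\ref{ss}. First I would fix, for each $a\in(f-1)H_1(\Si)\cap(g-1)H_1(\Si)$, choices $x,y\in H_1(\Si)$ with $a=(f-1)x=(g-1)y$, and represent $x$, $y$ and $a$ by $1$-cycles in $\Si$.

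Next I would assemble a closed $2$-cycle $Z(a)$ out of three sheets: one lying over the region adjacent to the $f$-hole and carrying $x=(f-1)^{-1}a$, one over the region adjacent to the $g$-hole carrying $y=(g-1)^{-1}a$, and one over the remaining (output) region carrying $a$. The point of the relations $a=(f-1)x=(g-1)y$ is precisely that the fiber-boundaries produced when the sheets cross the seams cancel: crossing the $f$-seam replaces $x$ by $f(x)$, and $f(x)-x=a$ must be supplied by the output sheet, and likewise for $g$; so verifying $\partial Z(a)=0$ is a bookkeeping check using these two relations. I would then show that $a\mapsto[Z(a)]$ descends to an isomorphism from $(f-1)H_1(\Si)\cap(g-1)H_1(\Si)$ onto the quotient of $H_2(W(f,g))$ by the subspace $R$ spanned by the fiber class $[\Si]$ and the classes coming from $\ker(f-1)$ and $\ker(g-1)$ over the seams; this $R$ is contained in the radical of the intersection form. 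The Leray--Serre spectral sequence of $\Si\to W(f,g)\to P$ over the pair of pants $P$ can be cited as an independent check of the dimension, since $H_1(P;H_1(\Si))$ surjects onto $(f-1)H_1(\Si)\cap(g-1)H_1(\Si)$.

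Finally I would compute $[Z(a)]\cdot[Z(b)]$. Each unordered pair of sheets of $Z(a)$ and $Z(b)$ meets over a single transverse intersection of their base arcs, and the contribution there is the fiber intersection number in $\Si$ of the corresponding $1$-cycles. Summing the three contributions and tracking orientations should give
\begin{equation}
\notag
[Z(a)]\cdot[Z(b)]=-\bigl((f-1)^{-1}a+(g-1)^{-1}a+a\bigr)\cdot b=-\,a\star_{f,g}b~,
\end{equation}
so that the induced form on $H_2(W(f,g))/R$ is exactly $-\star_{f,g}$; since dividing by $R$ leaves the signature unchanged, this yields $\Signature(W(f,g))=-\Signature(\star_{f,g})=-\varphi(f,g)$. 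The main obstacle, and the reason this requires care, is the sign bookkeeping: one must fix compatible orientations of the base arcs, the fiber, and the gluing maps so that the three terms appear with the right signs and the overall sign comes out negative — in particular so that the geometry naturally pairs the combination coming from $a$ against the plain output class $b$, reproducing the asymmetric-looking but (by Turaev) symmetric expression $a\star_{f,g}b$. A secondary point needing attention is the precise identification of $R$ and the check that $R\subset\rad$, so that passing to the quotient is legitimate.
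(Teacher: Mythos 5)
Your proposal is correct, and its geometric core coincides with the paper's proof: the paper also builds explicit $2$-cycles from product pieces (cylinders $A_1$, $A_2$ swept out by curves representing $(f-1)^{-1}a$ and $(g-1)^{-1}a$ over loops around the two holes, joined by a connecting chain $A_3$ carrying $a$ over an arc, exactly your three sheets), computes their pairwise intersections as base-crossings weighted by fiber intersection numbers, and obtains the minus sign from the orientation convention — a positive crossing in the base times a positive crossing in the fiber counts negatively because the frame $(e_1,e_3,e_2,e_4)$ differs from the standard one by a transposition. Where you diverge is in the homological bookkeeping. The paper cuts $W(f,g)$ along the single vertical seam $\I\times\Si$ of Figure~\ref{ss}, obtaining $\I\times T(f)\sqcup \I\times T(g)$, and reads everything off the Mayer--Vietoris sequence: the image of $H_2(T(f))\oplus H_2(T(g))$ is the "part of the radical" being divided out, and the cokernel is identified with $\ker\bigl(H_1(\Si)\to H_1(T(f))\oplus H_1(T(g))\bigr)=(f-1)H_1(\Si)\cap(g-1)H_1(\Si)$ via the Mayer--Vietoris boundary map, against which the explicit cycles are checked. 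You instead propose to exhibit the subspace $R$ (fiber class plus tori over the seams coming from $\ker(f-1)$ and $\ker(g-1)$) by hand, prove $R\subset\rad$, and establish that $a\mapsto[Z(a)]$ is an isomorphism onto $H_2(W(f,g))/R$, with the Serre spectral sequence over the pair of pants as a dimension check. That route works — your $R$ is exactly the image of $H_2(T(f))\oplus H_2(T(g))$, and both lie in the image of $H_2(\partial W(f,g))$, hence in the radical — but it obliges you to prove well-definedness, surjectivity, and injectivity of $a\mapsto[Z(a)]$ separately, all of which the Mayer--Vietoris sequence delivers in one stroke; conversely, your description makes the generators of the discarded radical piece more concrete than the paper's does.
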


\begin{rem}{\em This gives a topological proof that the form $\star_{f,g}$ is symmetric.}\end{rem}

\begin{proof}[Proof of Proposition~\ref{fgint}] If we cut $W(f,g)$ along the $\I \times {\Si}$ indicated by the vertical dotted line in Figure \ref{ss}, we obtain the disjoint union of $\I \times T(f)$ and $\I \times T(g).$
This gives the long exact Mayer-Vietoris sequence:
\begin{align}
& H_2(T(f)) \oplus H_2( T(g)) \rightarrow H_2(W(f,g)) \rightarrow H_1({\Si}  )\notag \rightarrow 
H_1( T(f)) \oplus H_1( T(g)) \notag
\end{align}

The image of the first arrow is 
 contained  
in the radical of the intersection
form. Thus, on the cokernel of this map, there is an induced bilinear
symmetric form whose signature is  
$\Signature(W(f,g))$.
On the other hand,
this cokernel is isomorphic to the kernel of the last arrow  which can
be identified with $(f-1)H_1( \Si) \cap (g-1)H_1(\Si).$ We only need
to see that the middle arrow (which is the Mayer-Vietoris boundary
map) sends  the intersection form on $W(f,g)$ to minus the form  $\star_{f,g}$. 

We may describe a homology class in $H_2(W(f,g))$
which maps to an element $a\in (f-1)H_1( \Si) \cap
(g-1)H_1(\Si)\subset H_1( \Si)$ as follows.  Suppose $\alpha_1$ and
$\alpha_2$ are 
closed oriented 
curves in ${\Si}$ such that  $(f-1)[\alpha_1]=a$, and 
$(g-1)[\alpha_2]=a.$ Then $\alpha_1$  sweeps out  in  $T(f)$  a cylinder $A_1$ which projects onto the solid circle labeled $A_1$ in Figure \ref{ss}. We think of this cylinder as a 2-chain with   boundary lying in the copy of ${\Si}$ lying over the point where the lines labelled $A_1$ and $A_3$ meet.  We will denote 
this copy of ${\Si}$ by ${\Si}_1$. By construction,  we have 
$$ [\partial A_1]= 
[f(\alpha_1)] - [\alpha_1]
= a \in H_1({\Si}_1)~.$$

 There is also a similar 2-chain $A_2$ in $T(g)$ with boundary  $\partial A_2$ representing $a$ in $H_1({\Si}_2)$, where ${\Si}_2$ is another copy of ${\Si}$  lying over the point where the lines labelled $A_2$ and $A_3$ meet. 
We connect the boundaries of the 2-chains $A_1$ and $-A_2$ by a
2-chain $A_3$  in the copy of $\I\times {\Si}$ joining ${\Si}_1$  and ${\Si}_2$ (lying over the arc labelled $A_3$ in the figure) so that $$\partial  A_3= \partial A_2 - \partial A_1~.$$ 
Then 
$A_1 + A_3-A_2$ 
gives a 2-cycle in $W(f,g)$ representing a  homology class which maps to $a$ under the Mayer-Vietoris boundary map. (The minus sign in front of $A_2$ is necessary from the definition
of the Mayer-Vietoris boundary map.) 
 
If we have another such 2-chain  
$B_1+B_3-B_2$
 mapping to  $[b] \in (f-1)H_1(\Si) \cap (g-1)H_1(\Si)$, but placed further inside and rotated slightly,  Figure \ref{int2}  indicates why 
\begin{align}\notag
[A_1 +A_3 - A_2] \cap [B_1 +B_3- B_2] 
&= - \,\, ((f-1)^{-1} a + a +(g-1)^{-1}a
) \cdot b\\
& = - \,\, a \star_{f,g} b~. \notag
\end{align}

The reason for the minus sign in this equation is that a point of intersection $x$ of the
two $2$-chains corresponding to a positive intersection point $p$ in
the base (with frame $(e_1,e_2)$, say) and a positive
intersection point $q$ in the fiber over $p$ (with frame $(e_3,e_4)$,
say) should be counted negatively, since
the frame $(e_1, e_3, e_2, e_4)$ at $x$ differs from the standard frame
$(e_1, e_2, e_3, e_4)$ by a transposition.
\end{proof}

\begin{figure}[h]
\includegraphics[height=1.2in]{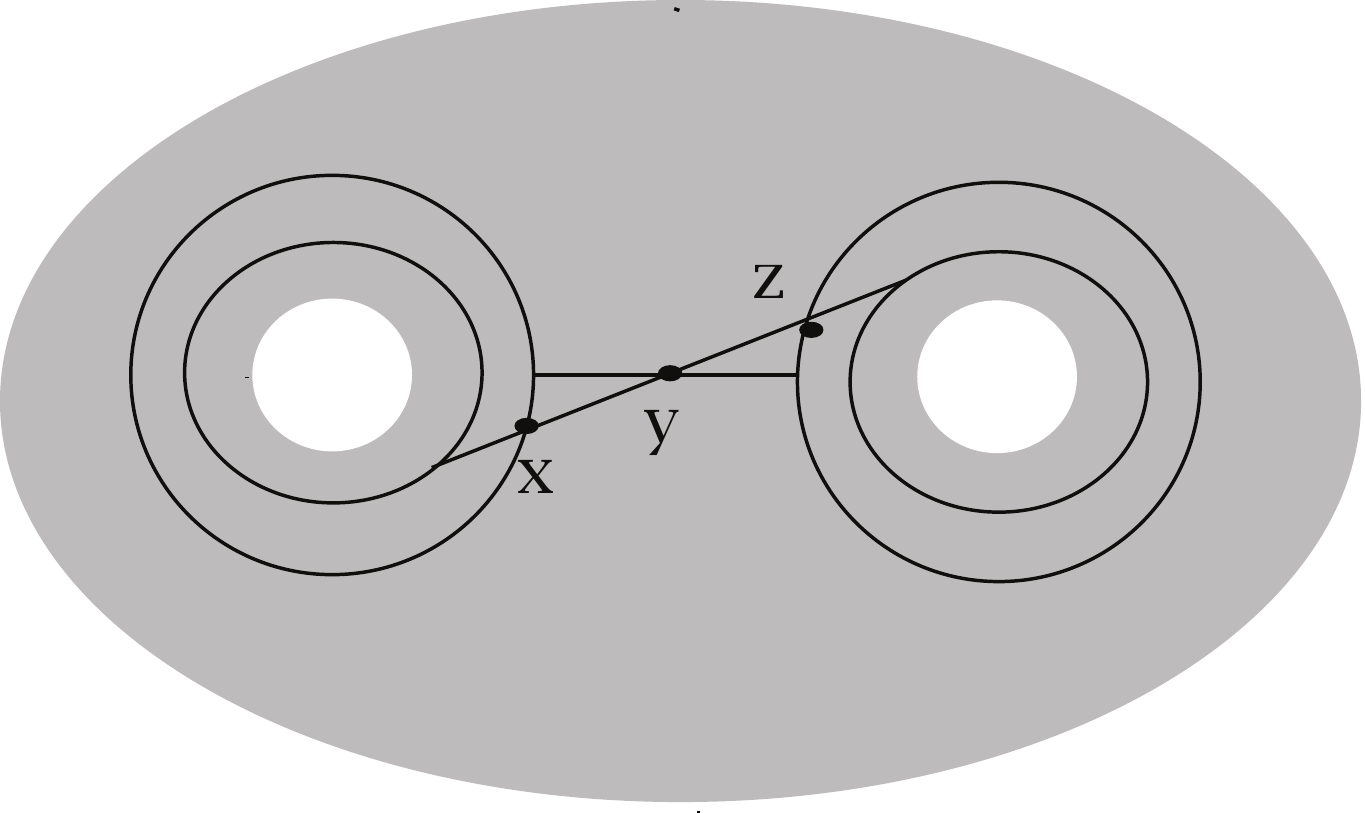}
\caption{The point  $x$ indicates $A_1 \cap B_3$ which contributes 
\hbox{$-(f-1)^{-1}(a) \cdot b$}. The point  $y$ indicates $A_3 \cap B_3$ which contributes 
$- a \cdot b$. The point  $z$ indicates $-A_2 \cap B_3$ which contributes 
\hbox{$- (g-1)^{-1}(a) \cdot b$.}}
\label{int2}
\end{figure}

We will need  \cite[Theorem 2]{T2,T3} where Turaev showed that the
cocycle $\varphi$ is a coboundary$\pmod 4$. Before stating this
result, we recall that, with $ \Gamma({  \Si})$ acting trivially on
$\BZ$, the coboundary 
 $\delta c$ 
of a 1-cochain $c:  \Gamma({  \Si}) \rightarrow \BZ$ is given by  
       $(\delta c) (g,h)= c(g)+c(h)-c(gh)$.
\begin{thm}[Turaev \cite{T2,T3}]\label{div}
The $1$-cochain $k$ on $ \Gamma({  \Si})$ which assigns to $f$
 
\begin{equation}\label{def-k}
k(f)= \dim\left( (f-1)H_1(\Si) \right) + 
 \sgn[\det(\star_f)] 
-1
\end{equation}
has  coboundary $\delta k$ satisfying
\[ \delta k \equiv \varphi \pmod{4} .\]
\end{thm}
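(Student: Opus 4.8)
The plan is to prove Turaev's divisibility statement $\delta k \equiv \varphi \pmod 4$ by combining the topological identification $\varphi = -\tau = -\Signature(W(\cdot,\cdot))$ from Proposition~\ref{fgint} with a cocycle-level analysis of the signature of the fibered $4$-manifold $W(f,g)$. Since we want congruence modulo four rather than merely modulo two, the signs must be tracked carefully throughout; this is exactly why the paper has been so insistent about orientations. First I would rewrite the claim as $\delta k + \tau \equiv 0 \pmod 4$, using $\varphi = -\tau$, so that the goal becomes showing that $\tau$ differs from $-\delta k$ by a multiple of four. The integer $k(f)$ is essentially the ``correction term'' that measures the failure of $\tau$ to be divisible by four, so the content is that adding $\delta k$ to $\tau$ kills the mod-$4$ obstruction.

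The key mechanism I expect to use is Wall non-additivity together with a careful bookkeeping of how $\Signature(W(f,g))$ decomposes. The cocycle identity for $\tau$ comes from gluing copies of $W$ over a pair-of-pants decomposition of a surface-with-boundary fibration, and the failure of strict additivity of the signature under gluing is governed by a Maslov/Wall correction term. The strategy is to express $\tau(f,g)$ in terms of dimensions of the relevant homology spaces $(f-1)H_1(\Si)$, $(g-1)H_1(\Si)$, $(gf-1)H_1(\Si)$ and their intersections, modulo four, and then to recognize $\delta k(f,g) = k(f)+k(g)-k(gf)$ as accounting for precisely the dimension terms $\dim((f-1)H_1(\Si))$ and the sign-of-determinant terms $\sgn[\det(\star_f)]$. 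Concretely, I would analyze the form $\star_{f,g}$ of Proposition~\ref{fgint}: its rank is $\dim\bigl((f-1)H_1(\Si)\cap(g-1)H_1(\Si)\bigr)$, and its signature $\varphi(f,g)$ is congruent mod $2$ to this rank. The mod-$4$ refinement requires comparing $\Signature(\star_{f,g})$ with $\Signature(\star_f) + \Signature(\star_g) - \Signature(\star_{gf})$, using the relation between the restricted forms $\star_f$, $\star_g$, $\star_{gf}$ and the paired form $\star_{f,g}$.

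The decisive ingredient is a \emph{Brown-type} or \emph{Gauss-sum} refinement: for a nonsingular (possibly non-symmetric) bilinear form, the quantity $\dim + \sgn[\det]$ behaves mod $4$ like a signature-type invariant, and Milgram's formula (or the theory of the Gauss sum of a quadratic form) converts the sign of the determinant into a signature modulo eight, hence modulo four after halving. The heart of the argument is therefore to show that the assignment $f \mapsto \dim((f-1)H_1(\Si)) + \sgn[\det(\star_f)] - 1$ transforms, under the coboundary operator, exactly according to the mod-$4$ reduction of $\Signature(\star_{f,g})$. I would carry this out by diagonalizing $\star_f$, $\star_g$, and $\star_{gf}$ simultaneously as far as possible over $\BQ$, keeping track of the signs of determinants via the multiplicativity of the discriminant under the orthogonal-sum and metabolic decompositions that arise from the Mayer-Vietoris sequence in Proposition~\ref{fgint}.

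The main obstacle, as I see it, is controlling the sign terms $\sgn[\det(\star_f)]$ under the coboundary with enough precision to get modulo four and not merely modulo two. Modulo two, the statement reduces to a dimension count (each $\sgn[\det]$ becomes irrelevant and one is comparing ranks of the forms), and that is straightforward from Mayer-Vietoris. The genuine difficulty is the passage from mod $2$ to mod $4$, where the determinant signs interact with the signature in a way that is not additive. I expect the crux to be a lemma, essentially algebraic, relating $\sgn[\det(\star_f)]\cdot\sgn[\det(\star_g)]\cdot\sgn[\det(\star_{gf})]$ to the discriminant of the paired form $\star_{f,g}$, so that the product of signs matches the parity shift in $\Signature(\star_{f,g})$ needed to upgrade the mod-$2$ congruence to mod $4$. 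Turaev's own proof is purely algebraic; I would follow that algebraic route for the sign bookkeeping while using the $4$-manifold picture of Proposition~\ref{fgint} only to fix conventions and signs, since the topological model makes the symmetry and the Mayer-Vietoris identification transparent.
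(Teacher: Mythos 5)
First, a point of comparison: the paper does not prove this statement at all --- it is imported verbatim as Turaev's Theorem~2 from \cite{T2,T3}, and the paper's only original contribution in this vicinity is Proposition~\ref{fgint}, which identifies Turaev's algebraically defined cocycle $\varphi$ with $-\tau$. So your attempt must stand on its own as a re-proof of Turaev's theorem, and as written it does not: it is a strategy outline whose decisive steps are deferred (``I expect the crux to be a lemma\dots'', ``I would carry this out by\dots'') rather than carried out. The entire difficulty of the theorem is concentrated in exactly the step you leave as an expectation, namely the mod-$4$ interaction between the determinant signs $\sgn[\det(\star_f)]$ and the signature of $\star_{f,g}$; nothing in your outline establishes it.

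Second, several concrete assertions in the outline are wrong. (i) You propose to compare $\Signature(\star_{f,g})$ with $\Signature(\star_f)+\Signature(\star_g)-\Signature(\star_{gf})$, but $\star_f$ is not symmetric in general --- the paper stresses this point --- so $\Signature(\star_f)$ is undefined; only $\sgn[\det(\star_f)]$, and the signature of the restriction $\star_{f,\lambda}$ to a lagrangian, make sense. (ii) Your claim that the rank of $\star_{f,g}$ equals $\dim\bigl((f-1)H_1(\Si)\cap(g-1)H_1(\Si)\bigr)$ is false: taking $g=f^{-1}$ one has $(f^{-1}-1)^{-1}=-(f-1)^{-1}f$, hence $\star_{f,f^{-1}}$ vanishes identically on the (generally nonzero) space $(f-1)H_1(\Si)$, consistent with $\tau(f,f^{-1})=0$; so the form can be totally degenerate, and even your ``straightforward'' mod-$2$ reduction already needs an argument that compares $\delta(\dim\circ(f\mapsto(f-1)H_1(\Si)))$ with the actual rank of $\star_{f,g}$. (iii) Milgram's formula and Gauss sums concern symmetric bilinear (or finite quadratic) forms; there is no such tool for nonsymmetric forms, and the assertion that ``$\dim+\sgn[\det]$ behaves mod $4$ like a signature-type invariant'' of a nonsymmetric form is essentially the content of Turaev's theorem, not a known fact one can cite to prove it. If you want an honest proof, you should follow Turaev's purely algebraic argument in \cite{T3} (which proceeds by reduction and direct computation of how $\det(\star_{gf})$, $\det(\star_f)$, $\det(\star_g)$ and $\Signature(\star_{f,g})$ are related), or find a genuinely new route; what you have written correctly locates the difficulty but does not resolve it.
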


\begin{rem}{\em 
The reason that we gave a proof of Proposition \ref{fgint} is that Turaev
proves Theorem
  \ref{div} for the cocycle  $\varphi$ given by
 $\Signature(\star_{f,g})$, 
and we will use the cocycle $\tau$  described by 
$\Signature(W(f,g))$.
So we need to know how exactly they are related.}
\end{rem}

Walker \cite[p.~124]{W} defines a 1-cochain $j_\lambda$ on $
\Gamma(  \Si)$ which assigns to $f$  the signature of the
4-manifold $J_\la(f)$ obtained by gluing $\I \times \BH$ along $\I
\times \Si$ in the boundary of $\I\times T(f)$ as indicated in Figure 
\ref{j}.\footnote{Walker actually draws the arrow for $f$ in the other direction, and this has the effect that his $j$ is minus our $j$.
Similarly Walker's $d(f,g)$ is minus our $\tau(f,g).$}
Here, as above, $\lambda$ is the kernel of $H_1(\Si) \rightarrow
H_1(\BH)$.  Note that the boundary of $J_\la(f)$ is 
$\overline{\Heeg(f)} \sqcup {T(f)}.$

\begin{figure}[h]
\includegraphics[height=1.2in]{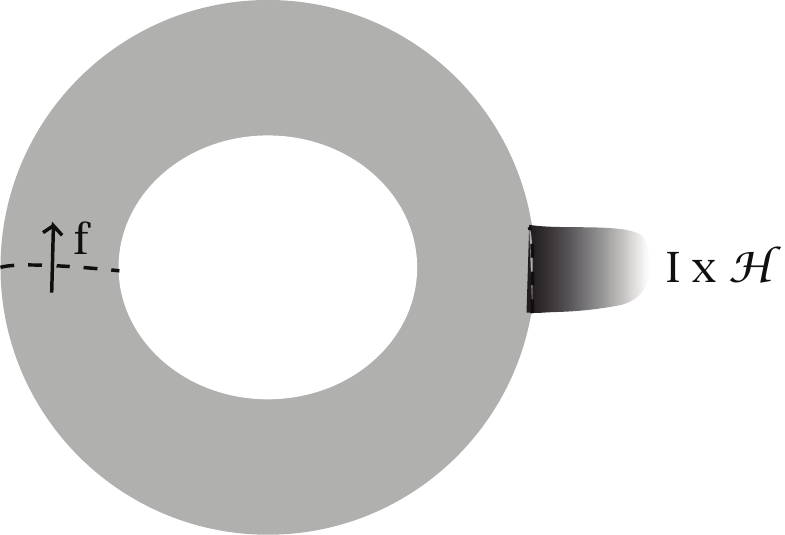}
\caption{A thickened  handlebody attached to a thickened mapping torus of $f$. Its signature is $j_\lambda(f)$. }
\label{j}
\end{figure} 

We have a formula for $j_\la(f)$ which is similar to  Turaev's formula
for $\varphi(f,g)$. Recall that $\star_{f,\lambda}$ is the symmetric bilinear form on  $\lambda
     \cap (f-1)H_1(\Si)$ obtained as the restriction of the
     non-symmetric form
     $\star_f$.

\begin{prop} \label{Walkmfd} One has 
\begin{equation}
\label{def-j} 
\Signature (J_\lambda(f))
= j_\lambda(f)= 
 - \Signature(\star_{f,\lambda})
\end{equation}
\end{prop}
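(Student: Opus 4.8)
The plan is to compute the signature of the $4$-manifold $J_\lambda(f)$ directly via a Mayer--Vietoris argument, exactly parallel to the proof of Proposition~\ref{fgint}, and to identify the resulting intersection form with $-\star_{f,\lambda}$. Recall that $J_\lambda(f)$ is obtained by gluing $\I\times\BH$ onto $\I\times T(f)$ along $\I\times\Si$ sitting in the boundary $T(f)=\partial(\I\times\BH)$-side. First I would set up the Mayer--Vietoris sequence for this decomposition $J_\lambda(f)=(\I\times\BH)\cup(\I\times T(f))$, glued along $\I\times\Si$. Since $\I\times\BH$ and $\I\times T(f)$ are homotopy equivalent to $\BH$ and $T(f)$ respectively, and the gluing locus is homotopy equivalent to $\Si$, I expect the relevant portion of the sequence to read
\begin{equation}
\notag
H_2(\BH)\oplus H_2(T(f)) \rightarrow H_2(J_\lambda(f)) \rightarrow H_1(\Si) \rightarrow H_1(\BH)\oplus H_1(T(f)).
\end{equation}
As in Proposition~\ref{fgint}, the image of the first map lies in the radical of the intersection form on $H_2(J_\lambda(f))$, so the form descends to the cokernel, and this cokernel is isomorphic to the kernel of the last arrow.

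The next step is to identify that kernel. The condition that a class $a\in H_1(\Si)$ die in $H_1(\BH)$ means $a\in\lambda$ (by the defining property of $\BH$), and the condition that $a$ die in $H_1(T(f))$ means $a\in(f-1)H_1(\Si)$ (this is the standard computation of $H_1$ of a mapping torus, and is exactly the condition appearing in Turaev's form). Hence the kernel of the last arrow is $\lambda\cap(f-1)H_1(\Si)$, which is precisely the domain of $\star_{f,\lambda}$. So the signature of $J_\lambda(f)$ equals the signature of the induced form on $\lambda\cap(f-1)H_1(\Si)$.

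The heart of the argument, and the step I expect to be the main obstacle, is the explicit geometric computation of this induced intersection pairing, showing it equals $-\star_{f,\lambda}$. For a class $a\in\lambda\cap(f-1)H_1(\Si)$, I would build a representing $2$-cycle out of two pieces: a cylinder $A_1$ swept out in $T(f)$ by a curve $\alpha_1$ with $(f-1)[\alpha_1]=a$ (giving boundary $a$, just as in Proposition~\ref{fgint}), together with a $2$-chain $A_2$ in $\I\times\BH$ bounding $a$, which exists precisely because $a\in\lambda=\ker(H_1(\Si)\to H_1(\BH))$. Pairing two such cycles $A_1-A_2$ and $B_1-B_2$ and tracking the intersections in the base-versus-fiber manner of Figure~\ref{int2}, the $T(f)$-part should contribute $-(f-1)^{-1}(a)\cdot b$ exactly as before, while the handlebody part contributes nothing new to the off-diagonal count; the single term $(f-1)^{-1}(a)\cdot b$ is precisely $a\star_f b$, and on $\lambda\cap(f-1)H_1(\Si)$ this restricts to the symmetric form $\star_{f,\lambda}$. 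The same transposition-of-frames sign as in Proposition~\ref{fgint} produces the overall minus sign. The delicate points will be getting the orientations and the framing conventions of Figure~\ref{j} to match those of Figure~\ref{ss}, and verifying that no extra contributions arise from the handlebody piece; once these sign bookkeeping issues are settled, the equality $\Signature(J_\lambda(f))=-\Signature(\star_{f,\lambda})$ follows, which is the assertion of the proposition.
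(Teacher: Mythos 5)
Your proposal is correct and follows essentially the same route as the paper's own proof: the identical Mayer--Vietoris sequence for the decomposition of $J_\lambda(f)$ into $\I\times T(f)$ and $\I\times \BH$ glued along $\I\times\Si$, the same identification of the cokernel with $\lambda\cap(f-1)H_1(\Si)$, and the same representing $2$-cycles $A_1-A_2$ whose intersection pairing yields $-\,a\star_{f,\lambda}b$, with the minus sign coming from the base-versus-fiber frame transposition exactly as in Proposition~\ref{fgint}.
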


\begin{proof}  We have a Mayer-Vietoris sequence:
\begin{align}
& H_2(T(f)) \oplus H_2( \BH) \rightarrow H_2(J_\lambda(f)) \rightarrow H_1( \Si) \notag  \rightarrow H_1( T(f) ) \oplus H_1(\BH \notag) 
\end{align}

The image of the first arrow is 
contained 
in the radical of the intersection
form. Thus, on the cokernel of this map, there is an induced bilinear
symmetric form whose signature is  
$\Signature(J_\lambda(f))$.
On the other
hand, this cokernel is isomorphic to the kernel of the last arrow
which can be identified with $\la \cap (f-1)H_1(  \Si).$ We  need to
show that the middle arrow sends the intersection form on $J_\la(f)$
to minus the form  $\star_{f,\la}$. 

The proof is very much like the proof  of Proposition \ref{fgint}.  
We describe a homology class in $H_2(J_\la(f))$
which maps to an element $a\in \la \cap (f-1)H_1( \Si).$  Suppose
$\alpha$ is 
an oriented curve
in $\Si$ such that
$(f-1)[\alpha]=a$. Then $\alpha$  sweeps out in  $T(f)$  a $2$-chain $A_1$ in $T(f)$ with boundary representing $a$ on $\Si_1$, a copy of $\Si$.  
Moreover there is a 2-chain $A_2$ in $\BH$ with boundary representing $a$. Then $A_1 - A_2$ gives a 2-cycle in $J_\la(f)$ representing a class which maps to $a$ under the Mayer-Vietoris boundary map. 

If we have another 2-chain $B_1 - B_2$ mapping to a  class $[b] \in
\la \cap (f-1)H_1(  \Si) $ but placed further inside and rotated slightly,  Figure \ref{j2}  indicates why $$[A_1 - A_2] \cap [B_1 - B_2]
= - a \star_{f,\la} b~.$$
\end{proof}
 
\begin{figure}[h]
\includegraphics[height=1.2in]{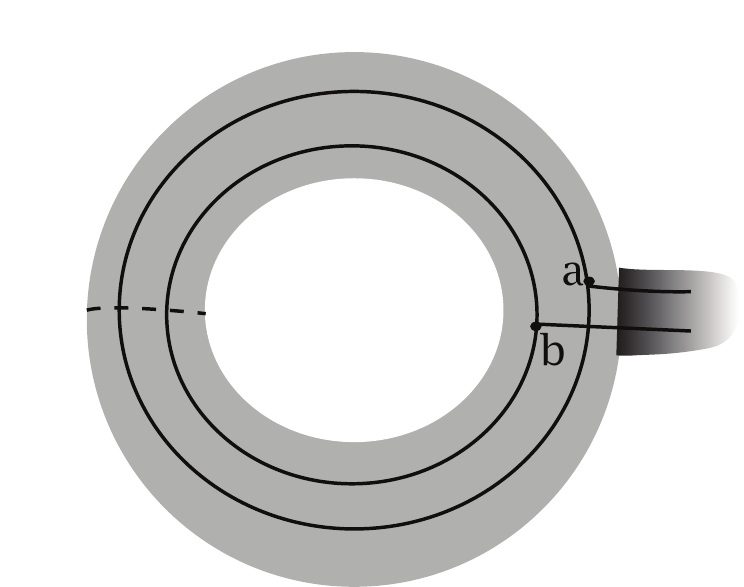}
\caption{The intersection of 2-cycles is given by $ - (f-1)^{-1} (a)\cdot b$. }
\label{j2}
\end{figure}

In the following Theorem and proof, we adapt an argument of Walker \cite[pp 123-125]{W} 
to our definitions.
Let $[m_\la]$  and $[\tau]$ represent the cohomology classes in $H^2(\Gamma({   \Si}); \BZ)$ represented by the cocycles $m_\la$ and $\tau$.
\begin{thm}[Walker] 
\label{ctd} We have that $\delta(j_\la)= \tau+m_\la$ . Thus $[m_\la]=- [\tau]$. \end{thm}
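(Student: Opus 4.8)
The plan is to prove the cochain identity $\delta(j_\la) = \tau + m_\la$ directly on cochains, from which $[m_\la] = -[\tau]$ follows immediately in cohomology (since $[\delta j_\la]=0$). The key idea, following Walker, is geometric: all three cochains $j_\la$, $\tau$, and $m_\la$ have been expressed in the preceding propositions as signatures of explicit oriented $4$-manifolds, whose oriented boundaries are disjoint unions of the Heegaard manifolds $\Heeg(\cdot)$ and mapping tori $T(\cdot)$. I would therefore assemble the relevant $4$-manifolds into a single closed (or boundary-of-a-$5$-manifold) configuration and apply Novikov additivity of the signature, exploiting that the boundary pieces cancel in pairs when orientations are tracked carefully.

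Concretely, recall the boundary computations already established: $\partial W(f,g) = \overline{T(g\circ f)}\sqcup T(f)\sqcup T(g)$; $\partial U(\BH_{g\circ f},\BH_g,\BH) = \overline{\Heeg(g\circ f)}\sqcup \Heeg(f)\sqcup \Heeg(g)$; and $\partial J_\la(h) = \overline{\Heeg(h)}\sqcup T(h)$. First I would form the $4$-manifold
\[
Z = W(f,g)\ \cup\ J_\la(g\circ f)\ \cup\ \overline{J_\la(f)}\ \cup\ \overline{J_\la(g)}\ \cup\ \overline{U(\BH_{g\circ f},\BH_g,\BH)}
\]
by gluing along the common mapping-tori and Heegaard-manifold boundary components. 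The point of choosing these five pieces, with these orientations, is that every $T(\cdot)$ and every $\Heeg(\cdot)$ appearing on one boundary is matched by an oppositely-oriented copy on another, so $Z$ is closed. Using Proposition~\ref{Walkmfd}, Proposition~\ref{fgint}, and the signature formula for $m_\la$ via $U$, Novikov additivity then gives
\[
\Signature(Z) = \tau(f,g) + j_\la(g\circ f) - j_\la(f) - j_\la(g) + m_\la(g,f)~,
\]
using that reversing orientation negates the signature, and recalling $\tau(f,g)=\Signature(W(f,g))$ while $m_\la(g,f)=-\Signature(U(\BH_{g\circ f},\BH_g,\BH))$.

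The crux of the argument, and the main obstacle, is to show that $\Signature(Z)=0$. This is where Walker's manuscript is incomplete about signs, so I expect the hard part to be exhibiting $Z$ as the oriented boundary of a compact $5$-manifold, whence $\Signature(Z)=0$ by cobordism invariance of the signature. I would build this $5$-manifold by a mapping-cylinder / cone construction that simultaneously interpolates between the fibered description of $W(f,g)$ and the handlebody-gluing descriptions of the $J_\la$ and $U$ pieces: the thickened handlebodies $\I\times\BH$ used in defining $J_\la$ and $U$ can be thickened once more to $\I\times\I\times\BH$, and the base two-holed disk of $W(f,g)$ can be filled in to produce a $5$-dimensional bordism with boundary exactly $Z$. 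Granting $\Signature(Z)=0$, the displayed equation rearranges to
\[
\delta(j_\la)(g,f) = j_\la(g)+j_\la(f)-j_\la(g\circ f) = \tau(f,g)+m_\la(g,f)~,
\]
which is the claimed cochain identity (after matching the cocycle-argument conventions, using $\tau(f,g)=\tau(g,f)$). Throughout, the delicate point is orientation bookkeeping on the boundary gluings — matching each $\overline{\Heeg(\cdot)}$ to its $\Heeg(\cdot)$ and each $\overline{T(\cdot)}$ to its $T(\cdot)$ with the correct signs — since the whole purpose here is to pin down the relation modulo $4$ rather than merely modulo $2$, so I would carry the orientation conventions explicitly at every gluing rather than suppressing them.
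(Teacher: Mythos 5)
Your assembly of the closed $4$-manifold $Z$ is exactly right and matches the paper: your $Z$ is, up to reversing orientation, precisely the boundary of the $5$-manifold the paper constructs, your cancellation of the $T(\cdot)$ and $\Heeg(\cdot)$ boundary pieces is correctly checked, and Novikov additivity gives the same identity $\Signature(Z)=\tau(f,g)+j_\la(g\circ f)-j_\la(f)-j_\la(g)+m_\la(g,f)$, which rearranges to the theorem once $\Signature(Z)=0$ is known. But that vanishing is the entire content of the proof, and it is exactly the step you leave unproved: you write ``Granting $\Signature(Z)=0$'' after offering only a sketch. The sketch, read literally, does not work: ``filling in the base two-holed disk of $W(f,g)$'' would require extending the $\Si$-fibration over capping disks, which is impossible unless the monodromies $f$ and $g$ are isotopic to the identity; and thickening $\I\times\BH$ to $\I\times\I\times\BH$ is not by itself a construction whose boundary can be verified. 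Since the purpose of the theorem is a sign-accurate cochain identity (needed modulo $4$, not merely in cohomology), conceding this step is a genuine gap.

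The missing construction is in fact simpler than an interpolation or cone: take $X=\bigl(\I\times W(f,g)\bigr)\cup\bigl(D^2\times\BH\bigr)$, where $D^2\times\BH$ is attached along its boundary piece $D^2\times\partial\BH=D^2\times\Si$ to a region $R\approx D^2\times\Si$ inside $\{1\}\times W(f,g)$, namely a fibered neighborhood of a single fiber lying over a small disk in the pair of pants, disjoint from the two seams. Then $\{0\}\times W(f,g)$ contributes $\overline{W(f,g)}$ to $\partial X$, and the remaining boundary --- the punctured copy $(\{1\}\times W(f,g))\setminus \Int R$, the vertical part $\I\times\partial W(f,g)$, and $S^1\times\BH$ from the attached piece --- decomposes along closed $3$-manifolds into $U(\BH_{g\circ f},\BH_g,\BH)$, $\overline{J_\la(g\circ f)}$, $J_\la(g)$ and $J_\la(f)$, with orientations exactly as in the boundary formulas you quoted. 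Hence $\partial X=\overline{Z}$, so $\Signature(Z)=0$ by cobordism invariance, and your displayed identity becomes $\delta(j_\la)=\tau+m_\la$ with no further argument needed.
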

\begin{proof}  Form a 5-manifold 
 $X$   
by attaching  $D^2 \times \BH $ to $\I \times W(f,g)$ along 
$R \subset \{1\}\times W(f,g)$, where $R\approx D^2\times \Si$ is represented by the darkest shaded region  in Figure~\ref{x}.
The boundary 
 $\partial X$  
 is the union of copies  of the oriented manifolds 
$\overline{W(f,g)}$, 
$U(\BH_{g\circ f}, \BH_g, \BH)$, 
$\overline{J_\la(g \circ f)}$, ${J_\la(g)}$, ${J_\la(f)}$. To see that the orientations are as stated, note that:
\begin{align}
\partial(\overline{W(f,g)})&= T(g \circ f) \sqcup \overline{ T(g)}   \sqcup \overline{ T (f)}\notag \\ 
\partial(U(\BH_{g\circ f}, \BH_g, \BH))
&= 
\overline{\Heeg(g \circ f)} \sqcup { \Heeg(g)}   \sqcup  \Heeg (f)
\notag \\
\partial(\overline{J_\la(g \circ f)})&= \overline{T(g \circ f)} \sqcup   
{\Heeg(g\circ f)} 
\notag \\
\partial({J_\la(g)})&= {T(g)} \sqcup 
 \overline{\Heeg(g)}  
 \notag \\
\partial({J_\la(f)})&= {T(f)} \sqcup  
 \overline{\Heeg(f)} 
. \notag 
\end{align}

 These 4-manifolds  are glued along closed 3-manifolds. By Novikov
 additivity, the signature of $\partial X$ is the sum of the
 signatures of the pieces.  As the signature of a 4-manifold which is
 the boundary of a 5-manifold 
is zero, we have that: 
\[-\tau(g,f)-m_\la(g,f)-j_\la(g\circ f)+j_\la(g)+j_\la(f)=0.\]
\end{proof}

\begin{figure}[h]
\includegraphics[height=1.2in]{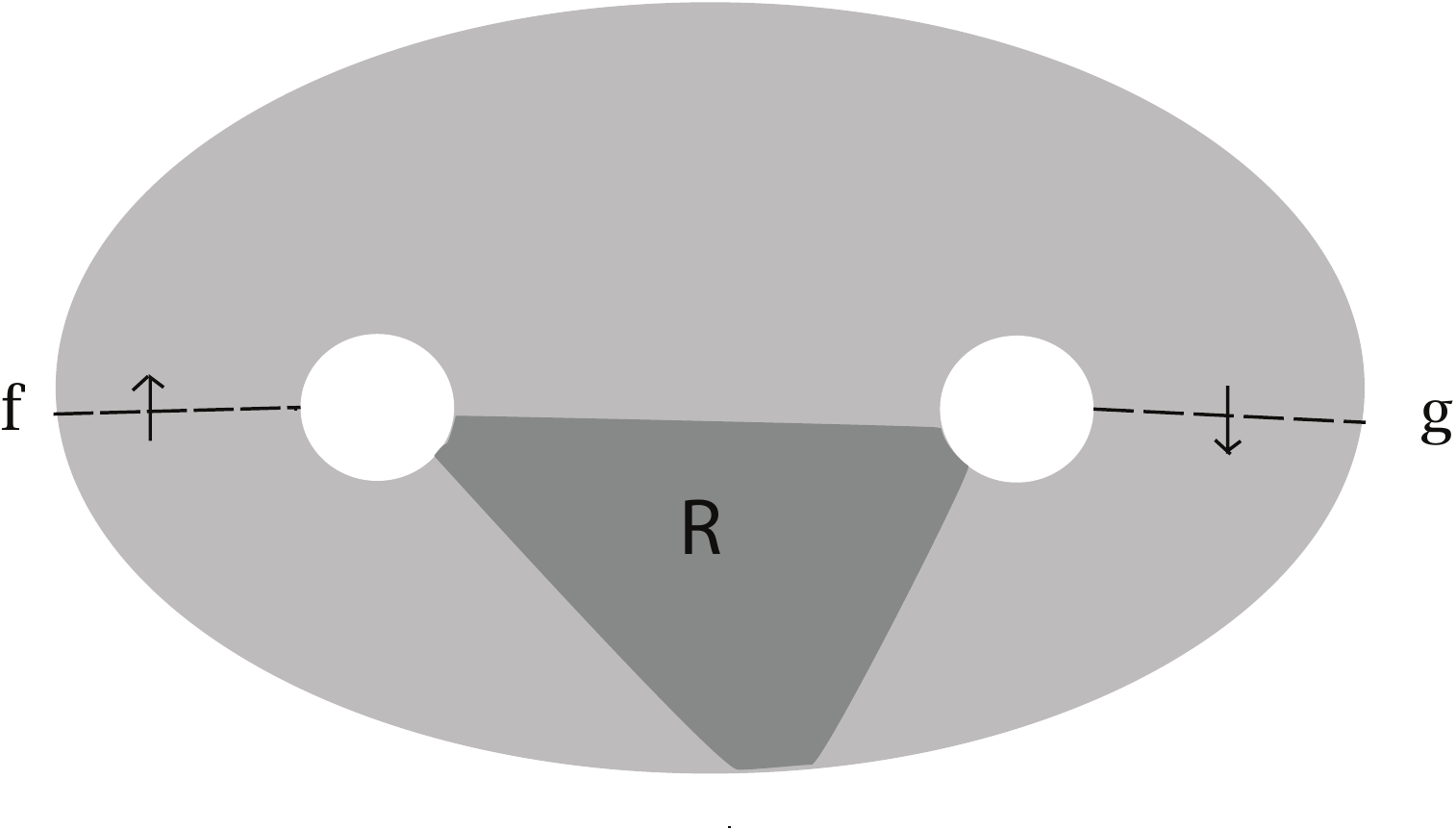}
\caption{$D^2 \times \BH$ is attached  along  a copy of $D^2 \times  \Si $ indicated by the dark shaded region $R$.}
\label{x}
\end{figure} 

We are now ready to give the

\begin{proof}[Proof of Theorem \ref{hom}]  
 Consider the subset of $\widetilde\Gamma(\Si)$ consisting of the
 $C(f,n)$ where $n\equiv
n_\la(f) \pmod 4$. We must show that this subset is a subgroup of
$\widetilde\Gamma(\Si)$. We have ${C}(g,0) \circ  {C}( f,0)=  
  {C}(g \circ f, m_\la(g,f))$ and therefore
\begin{equation}\notag 
  {C}(g,n_\la(g)) \circ  {C}( f,n_\la(f))=  
  {C}(g \circ f, n_\la(gf) +(m_\la+\delta n_\la)(g,f)).
\end{equation}  
Thus it is enough to show that 
\begin{equation}\label{m4} m_\la+\delta n_\la \equiv 0 \pmod 4~.
\end{equation}  Using (\ref{def-k}) and (\ref{def-j}), the definition
of $n_\la$ can be written as 
\begin{equation} n_\la=-j_\la-k~. \notag
\end{equation} By  Walker's  
theorem~\ref{ctd} and Turaev's 
theorem~\ref{div}, it follows 
that $$ \delta n_\la\equiv -\tau -m_\la - \varphi  \pmod 4~.$$
This implies (\ref{m4}) since  $\tau=-\varphi$ by
Proposition~\ref{fgint}. This completes the proof.

\end{proof}

\section{Surfaces with boundary}
\label{sec6}

To simplify the exposition, we delayed the discussion of surfaces with
boundary. However, all the preceding results  
hold
for the mapping
class group of a surface with boundary, modulo the following
modifications.

An extended surface with boundary is a compact oriented surface $\Si$
together with a choice of lagrangian $\la$ in
$H_1(\widehat\Si; \BQ)$, where 
$\widehat \Si$
is the closed surface obtained
from $\Si$ by attaching a  
disk to each boundary component. As
in the case without boundary, we denote the
ordinary mapping class group of $\Si$ by
$\Gamma( {\Si})$. It is
the group of orientation preserving diffeomorphisms of $\widehat\Si$ which
are the identity on the attached disks, modulo isotopies which are
again the
identity on the attached disks. 

We define the extended mapping class group $\widetilde \Gamma(\Si)$ 
to be the group of pairs $(f,n) \in \Gamma( \Si)
\times \BZ$ with multiplication 
\begin{equation} 
\notag
(g,m) \circ ( f,n)=(g \circ f, m+n + \mu_{\widehat\Si}(  \lambda,g _*
  \lambda, (g \circ f)_* \lambda))
\end{equation}

If $\Si$ has no boundary, this is equivalent to the definition of
$\widetilde \Gamma(\Si)$  in terms of mapping cylinders (see Remark~\ref{comp4}).  
If $\Si$ has boundary, we can again think of
$(f,n)$ as represented by the mapping cylinder 
$C(f,n)$ viewed as a cobordism from $\widehat\Si$ to itself.  But the
notion of equivalence of cobordisms has to be modified
appropriately so that $C(f,n)$ determines $(f,n)$. 

The groups $\widetilde \Gamma(\Si)^+$ and $\widetilde
\Gamma(\Si)^{++}$ are now defined exactly as in the closed case, and
Theorem~\ref{new4.1} 
continues to hold as stated. 
But
notice that although the curves representing Dehn
twists will avoid the attached disks,  we must, of course, use the closed surface $\widehat\Si$ (well-placed in
$S^3$ with
respect to the lagrangian $\lambda$) to construct the framed link  
$L^0_\la(\mathfrak w)$
associated to a word $\mathfrak w$ in Dehn twists. As for the algebraic description of $\widetilde
\Gamma(\Si)^{++}$,  
Theorem~\ref{hom} and Corollary~\ref{hom-ind} continue 
to hold except that in
the statement of Theorem~\ref{hom}, 
we
must replace the homology group
$H_1(\Si)$ by $H_1(\widehat\Si)$.  

The reason why all results in
Sections~\ref{sec3} --~\ref{sec5b}  go through for surfaces with
boundary is that all the extensions and cochains of the mapping class group
$\Gamma( \Si)$ we consider are pull-backs from the corresponding
extensions and cochains of $\Gamma( {\widehat\Si})$.

\section{Universal central extension}
\label{sec7}
In this section, let $\Si=\Si_{g,r}$ denote a connected compact oriented
surface of genus $g$ with $r$ boundary components. We denote the
ordinary mapping class group of $\Si_{g,r}$ by $\Gamma_{g,r}$. We also
write $\widetilde\Gamma_{g,r}$ and $\widetilde\Gamma_{g,r}^{++}$ for
the extended mapping class group $\widetilde\Gamma(\Si)$ and its index four
subgroup $\widetilde\Gamma(\Si)^{++}$. As remarked in~\ref{uni}, although
our description of these groups requires the choice of a lagrangian,
they are independent of this choice up to isomorphism.

\begin{prop} If  $g\ge 4$, then $\widetilde\Gamma_{g,r}^{++}$ is a universal central extension of $\Gamma_{g,r}$.
\end{prop}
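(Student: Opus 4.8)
The goal is to show that for $g\ge 4$, the group $\widetilde\Gamma_{g,r}^{++}$ is a universal central extension of $\Gamma_{g,r}$. The plan is to invoke the standard characterization of universal central extensions: a central extension $1\to A\to \widetilde G\to G\to 1$ is universal if and only if $\widetilde G$ is perfect and $H_2(\widetilde G;\BZ)=0$, equivalently, if and only if $G$ is perfect, $H_2(G;\BZ)\cong A$ via the extension class, and the extension class is a \emph{generator} of $H_2(G;\BZ)$ (so that the extension is the one classified by the identity map under $H^2(G;A)\cong \mathrm{Hom}(H_2(G;\BZ),A)$).

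First I would recall the homological facts about the mapping class group. For $g\ge 4$ (and indeed $g\ge 3$), the group $\Gamma_{g,r}$ is perfect and $H^2(\Gamma_{g,r};\BZ)\cong\BZ$, with analogous statements for $H_2$; this is Harer's computation, which is precisely the input cited in the introduction as giving a universal central extension by $\BZ$ for $g\ge 4$. Since $\widetilde\Gamma_{g,r}^{++}$ is a central extension of $\Gamma_{g,r}$ by $\BZ$ (the kernel is $4\BZ\cong\BZ$, as displayed in the diagram of Remark~\ref{new76}), it remains only to identify its cohomology class and check that this class is a generator of $H^2(\Gamma_{g,r};\BZ)\cong\BZ$.

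The key computation is therefore to identify the extension class of $\widetilde\Gamma_{g,r}^{++}$. By Corollary~\ref{hom-ind} and Theorem~\ref{hom}, the extension $\widetilde\Gamma(\Si)^{++}$ is obtained from $\widetilde\Gamma(\Si)$ by the congruence $n\equiv n_\la(f)\pmod 4$; concretely its class is represented by the cocycle $\tfrac14(m_\la+\delta n_\la)$, which is integral by equation~(\ref{m4}). Using $[m_\la]=-[\tau]$ from Walker's Theorem~\ref{ctd}, the class of $\widetilde\Gamma(\Si)^{++}$ is $-[\tau]/4$. The introduction records Meyer's result that $[\tau]$ is divisible by four and that $[\tau]/4$ generates $H^2(\Gamma_g;\BZ)$; hence $-[\tau]/4$ is also a generator. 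Thus the extension class is a generator of $H^2(\Gamma_{g,r};\BZ)\cong\BZ$, which is exactly the condition for universality.

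The main obstacle I anticipate is not any single hard lemma but the bookkeeping needed to pass between the closed and bounded cases and to translate the divisibility statement into the precise assertion that the relevant class \emph{generates} rather than merely lying in $H^2$. For the surface with boundary one appeals to Section~\ref{sec6}: all the extensions and cochains are pulled back from $\Gamma(\widehat\Si)$, and the homological facts (perfectness, $H^2\cong\BZ$, generator) hold for $\Gamma_{g,r}$ in the stable range $g\ge 4$. One must also confirm that the kernel $4\BZ$ maps isomorphically onto $H_2(\Gamma_{g,r};\BZ)\cong\BZ$ under the connecting map, i.e.\ that no finite quotient intervenes; this is guaranteed precisely because the class $-[\tau]/4$ is a generator. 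Assembling these ingredients, one concludes that $\widetilde\Gamma_{g,r}^{++}$ satisfies the standard criterion and is therefore the universal central extension.
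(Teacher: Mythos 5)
Your proof is correct and follows essentially the same route as the paper: perfectness of $\Gamma_{g,r}$ together with $H_2(\Gamma_{g,r};\BZ)\cong H^2(\Gamma_{g,r};\BZ)\cong\BZ$ reduces the claim to showing the extension class is a generator, which you obtain by identifying the class with $[m_\la]/4=-[\tau]/4$ and invoking Meyer's divisibility result, handling the bounded case by the pullback argument of Section~\ref{sec6} and the isomorphism $H_2(\Gamma_{g,r};\BZ)\rightarrow H_2(\Gamma_{g,0};\BZ)$. The only cosmetic difference is that you exhibit the integral cocycle $\tfrac14(m_\la+\delta n_\la)$ explicitly, whereas the paper simply states that $\widetilde\Gamma_{g,0}^{++}$ is classified by $[m_\la]/4$.
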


\begin{proof}
For $g\ge 3$, $\Gamma_{g,r}$ is perfect 
(see for example \cite{FM}).
 Hence it has a universal
central extension 
\cite[p.~96]{B}. 
This is an extension by
$H_2(\Gamma_{g,r};\BZ)$
satisfying a certain universal property.   
If $g\geq 4$, it is known that
 $H_2(\Gamma_{g,r};\BZ)\simeq\BZ$ and $H^2(\Gamma_{g,r};\BZ)\simeq\BZ$. 
See \cite{KS}, and the references therein.
Hence, if $g\ge 4$,  
a universal central extension  of $\Gamma_{g,r}$ is an extension by
$\BZ$, and 
 a central extension
of $\Gamma_{g,r}$ by $\BZ$ is a universal central extension if
and only if its cohomology class is a
generator of $H^2(\Gamma_{g,r};\BZ)$.

 Meyer \cite{Me} showed that if $g\ge 3$, the cohomology class $[\tau] 
\in H^2(\Gamma_{g,0};\BZ)$ defines a map  $H_2(\Gamma_{g,0};\BZ)\rightarrow
\BZ$ whose image is $4\BZ$. This implies that $[\tau]/4$ is a
generator of $H^2(\Gamma_{g,0};\BZ)$ if $g\ge 4$. Since
$[\tau]=-[m_\la]$ and the extension
$\widetilde\Gamma_{g,0}^{++}$ is classified by $[m_\la]/4$, this shows
that $\widetilde\Gamma_{g,0}^{++}$ is a universal central extension
if $g\ge 4$. Finally, the same is true for
$\widetilde\Gamma_{g,r}^{++}$ if $r>0$, since the extension $\widetilde\Gamma_{g,r}^{++}
\rightarrow \Gamma_{g,r}$ is a pullback of the extension $\widetilde\Gamma_{g,0}^{++}
\rightarrow \Gamma_{g,0}$ and the natural map $H_2(\Gamma_{g,r};\BZ)
\rightarrow H_2(\Gamma_{g,0};\BZ)$ is an isomorphism in our situation.
\end{proof}

\section{Applications to TQFT}
\label{sec8}

A Topological Quantum Field Theory (TQFT) in the sense of Atiyah and
Segal includes in particular representations of centrally
extended mapping
class groups of surfaces. The fact that one needs to consider central
extensions is sometimes called the  `framing anomaly' of the TQFTs we
are interested in. There are essentially four ways to describe
the central extension in the literature: Atiyah's description
\cite{A} using 
$2$-framings 
and the signature cocycle, 
 Walker's description \cite{W} using
integral weights, lagrangians, and 
Maslov indices, as in Definition~\ref{comp3}, the description using
$p_1$-structures given in \cite{BHMV2} (see also Gervais \cite{Ge}),
and the description in \cite{MR} {\em via} an explicit computation of
the projective factors arising in the TQFT-representations of the
mapping class group. 

In \cite{BHMV2}, a version of the Reshetikhin-Turaev
$SU(2)$- and $SO(3)$-TQFT was constructed using the skein theory of
the Kauffman
bracket. In this section, we consider the TQFT constructed in the same
way as in \cite{BHMV2}, but with integral weights and lagrangians in
place of the $p_1$-structures. This variant of the TQFT constructed in
\cite{BHMV2} has been described and used in \cite{G,GMW, GM, GM1}.
Our
aim  here 
is to show how to use the techniques of 
Section~\ref{newsec3}
to describe the representations of the
extended mapping class group $\widetilde\Gamma(\Si)$ arising in this
TQFT explicitly, and to do some computations with these
representations which are used in \cite{GM1}
and in work in progress.

A TQFT is a
functor on a certain cobordism category with values in the category of
vector
spaces, or, more generally, modules over a commutative ring. The
cobordism category we use is an enhancement of the extended cobordism
category $\cC$ described in Section~\ref{sec3}. The enhancement
consists in allowing surfaces to contain (possibly empty) collections
of colored banded points and  3-manifolds to contain a (possibly
empty)  colored banded trivalent graph which meets the boundary in the
banded points of the boundary surfaces. As in \cite{BHMV2}, a banded
point is an oriented arc through the point. A banded trivalent graph
is a trivalent
graph together with an oriented surface which deformation retracts to
the graph. The colors are from a certain finite palette which depend on
the specific TQFT under consideration. We refer to extended surfaces
and $3$-manifolds which are enhanced in this way simply as extended
surfaces and extended $3$-manifolds.

The TQFT's we consider are indexed by an integer $p\ge 3$ and denoted
$(Z_p, V_p)$.  The notation is such that to an extended 
       surface $\Si$, there is
associated a $k_p$-module
$V_p(\Si)$, and to an extended 
       cobordism $M:\Si  \rightsquigarrow \Si'$, there is 
associated a $k_p$-linear map $$Z_p(M): V_p(\Si) \rightarrow
 V_p(\Si')~,$$ where $k_p$ denotes the ring of coefficients. The module $V_p(\emptyset)$ is canonically identified with the ground
ring $k_p$. Although our modules are not vector spaces, it is
customary in TQFT to call their elements vectors.  If $M:\emptyset  \rightsquigarrow
 \Si$, we simply write $Z_p(M)$ for the vector $Z_p(M)(1) \in V_p(\Si).$ In
 \cite{G,GM}, this vector is denoted by $\left[M\right]_p$. 

We take the ring of coefficients to be $k_p=\BZ[{\frac 1
 p},A, \kappa]$, where $A$ is a primitive $2p$-th root of unity, and $\kappa$
 is a square root of $A^{-6-p(p+1)/2}.$  Increasing the weight of an extended $3$-manifold
 $M$ by
 one multiplies the vector $Z_p(M)$ by $\kappa$. (Here we depart from the
 notation of \cite{BHMV2} whose $\kappa$ is a further third root
 of our $\kappa$.) The palette of allowed colors is 
 $\{0, \ldots, k\}$, if $p=2k+4$ is even, and $\{0, \ldots, p-2\}$, 
if $p\geq 3$ is odd.  Moreover, 
the colorings of the trivalent graphs must be $p$-admissible
\cite[p. 905]{BHMV2}.  
If $p=2k+4$ then $(Z_p, V_p)$ is a variant of the
$SU(2)$-theory at level $k$, while for odd $p$ it is called an
$SO(3)$-theory.

A fundamental ingredient in the construction of \cite{BHMV2} is the
surgery axiom which allows one to replace surgery along a banded knot with
cabling that knot with a certain skein element  $\omega$ in the solid
torus. Here, a skein element in a $3$-manifold is a linear combination of
banded links (or, more generally, colored banded graphs). In
\cite{BHMV2} the relevant notion of surgery was $p_1$-surgery. Here is a
formulation of the surgery axiom in our present context.

Let $\omega$ denote the skein element in 
solid torus 
$\overline{S^1} \times D^2$
described in \cite{BHMV2}. The boundary of $\overline{S^1} \times D^2$
is the torus $S^1 \times S^1$, which we denote by
$\mathcal{T}$. It is also the boundary of $D^2\times S^1$. 
As in Section~\ref{sec2}, we 
make
$\overline{S^1} \times D^2$ and $D^2\times S^1$ into 
extended manifolds by
giving both of them weight zero. If $\mathcal{T}$ is made into an
extended surface by equipping it  with
some lagrangian $\lambda({\mathcal T})$, then the pair $(\overline{S^1 }\times D^2,\omega)$
defines a vector $Z_p(\overline{S^1 }\times D^2,\omega)$ in
$V_p({\mathcal T})$.

\begin{lem}[Surgery Axiom] Assume $\lambda({\mathcal T})$ is the
  lagrangian 
generated by the homology class of
the meridian ${\text pt} \times S^1$ 
of $\overline{S^1 }\times D^2$. 
Then in $V_p({\mathcal T})$ one
has $$Z_p(\overline{S^1 }\times D^2,\omega)= Z_p(D^2 \times S^1)~.$$
\end{lem}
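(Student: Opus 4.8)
The plan is to realize $D^2\times S^1$ as the result of extended surgery on $\overline{S^1}\times D^2$ along its core, and then to appeal to the surgery axiom of \cite{BHMV2} in the form that cabling a framed surgery curve with $\omega$ computes the effect of surgery in the TQFT. Let $K=\overline{S^1}\times 0$ be the core of $\overline{S^1}\times D^2$, framed by the product framing (the parallel copy $\overline{S^1}\times\text{pt}$ lying on $\mathcal{T}$). This is a $0$-framed unknot, so $\sigma(K)=0$. A direct check shows that extended surgery along $K$ turns $\overline{S^1}\times D^2$ into $D^2\times S^1$: deleting $\Int(\nu(K))$ leaves a collar $T^2\times\I$, and regluing $D^2\times S^1$ so that the framing curve $\overline{S^1}\times\text{pt}$ bounds a meridian disk produces the solid torus in which $\overline{S^1}\times\text{pt}$ (the meridian of $D^2\times S^1$ under the standard identification $\partial(\overline{S^1}\times D^2)=S^1\times S^1=\partial(D^2\times S^1)$ of Section~\ref{sec2}) bounds, namely $D^2\times S^1$. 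This is exactly the instance of the surgery of Section~\ref{sec2} with $M=\overline{S^1}\times D^2$ and $K$ its core.

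First I would verify that this identification also holds for the extended structures. Since $K$ lies in the interior, the boundary $\mathcal{T}$ and its lagrangian $\lambda(\mathcal{T})=\langle \text{pt}\times S^1\rangle$ are untouched. For the weight, the definition of extended surgery (Section~\ref{sec2}) gives $w\bigl((\overline{S^1}\times D^2)_K\bigr)=-\mu_{T}\bigl(\lambda_{\text{ext}},\langle m\rangle,\langle\ell\rangle\bigr)$, where $T=\partial\nu(K)$, while $m$ and $\ell$ are the meridian and the $0$-framing longitude of $K$, and $\lambda_{\text{ext}}$ is the lagrangian transported from $\mathcal{T}$ across the collar $T^2\times\I$. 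Because $\lambda(\mathcal{T})$ is the meridian of $\overline{S^1}\times D^2$, which corresponds across the collar to $\langle m\rangle$, two of the three lagrangians coincide and this Maslov index vanishes by Lemma~\ref{2lag}. Hence $(\overline{S^1}\times D^2)_K=D^2\times S^1$ with weight $0$, as extended manifolds. (This is precisely the \emph{natural choice} of lagrangian discussed after Lemma~\ref{4intrep}.)

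It then remains to invoke the surgery axiom. Cabling $K$ with $\omega$ gives back exactly the skein $\omega$ sitting on the core of $\overline{S^1}\times D^2$, so the axiom yields $Z_p\bigl((\overline{S^1}\times D^2)_K\bigr)=Z_p(\overline{S^1}\times D^2,\omega)$, whence $Z_p(D^2\times S^1)=Z_p(\overline{S^1}\times D^2,\omega)$. The main obstacle is this last step: in the $p_1$-structure formulation of \cite{BHMV2} the surgery axiom carries an anomaly correction, a power of $\kappa$ governed by the signature of the linking matrix of the surgery curve. The point of using extended manifolds is exactly that this correction is booked as a weight, since raising the weight by one multiplies $Z_p$ by $\kappa$ and extended surgery raises the weight by $\sigma$ (compare Lemma~\ref{4intrep}). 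Here $\sigma(K)=0$ and both solid tori carry weight $0$, so the correction is $\kappa^0=1$ and the axiom gives a clean equality. Making this reconciliation of the two normalizations precise, and checking that it is the meridian lagrangian which forces the weight contribution to vanish, is the one point that requires care.
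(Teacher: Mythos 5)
Your opening moves are correct and even illuminating: extended surgery along the $0$-framed core $K$ of $\overline{S^1}\times D^2$ does yield $D^2\times S^1$, and your Maslov-index check (the lagrangian transported across the collar is again $\langle m\rangle$, so Lemma~\ref{2lag} kills the correction term in the gluing formula) correctly shows that the weights match; this is the same mechanism noted after the definition of extended surgery in Section~\ref{sec2}. The problem is the final step. In the extended setting, the principle ``cabling a surgery curve with $\omega$ computes the effect of extended surgery on $Z_p$'' is not an available tool: it is exactly what the lemma asserts. The lemma is the local form of that principle, and the two are equivalent via the gluing axiom (write $M=E\cup_T\nu(K)$ with $E=M\setminus\Int\nu(K)$ and apply the map $Z_p(E)$ to the two vectors $Z_p(\nu(K),\omega)$ and $Z_p(D^2\times S^1)$ in $V_p(T)$). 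So reading ``the axiom'' as a statement about the extended TQFT makes your argument circular: you derive the lemma from its own general form.

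The only non-circular reading is the fallback you name, the surgery axiom of \cite{BHMV2} for the $p_1$-structure TQFT. But that is a statement about a different functor on a different cobordism category, and transferring it to the extended category --- your ``reconciliation of the two normalizations'' --- is precisely the mathematical content of the lemma; you flag it as ``the one point that requires care'' and then do not carry it out (nor is there a quotable comparison functor between $p_1$-structures and (weight, lagrangian) data that would let you do so cheaply). What the paper intends by ``completely analogous to the proof in \cite{BHMV2}'' is that one reruns the BHMV2 argument inside the extended framework rather than importing its conclusion: since $V_p$ arises from the universal construction, it suffices to pair both vectors with $Z_p(N)$ for every extended $N$ with $\partial N=\overline{\mathcal T}$; gluing converts the claim into an equality of closed extended invariants
\begin{equation*}
Z_p\bigl((\overline{S^1}\times D^2)\cup_{\mathcal T} N,\ \omega\bigr)\;=\;Z_p\bigl((D^2\times S^1)\cup_{\mathcal T} N\bigr),
\end{equation*}
that is, $Z_p(M,\ \omega \text{ on } K)=Z_p(M_K)$ for closed extended $M$; and this equality holds by the very definition of the closed extended invariant via surgery presentations and $\omega$-cablings, the power of $\kappa$ recording the weight matching the $\kappa^{\sigma}$ anomaly in the bracket normalization --- which is where Lemma~\ref{4intrep} and the meridian-lagrangian convention do real work. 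Your proposal never enters this construction, so the identity it ultimately needs is assumed rather than proved.
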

The proof of the Surgery Axiom in our current context of extended
manifolds is completely
analogous to the proof of this axiom in the original context of
\cite{BHMV2}. We omit the details.

Now let $\Si$ be 
 a connected
 extended surface, with
lagrangian $\lambda(\Si)$. Consider the extended mapping cylinder $C(f,n)\in \widetilde
\Gamma(\Si)$ where $(f,n) \in \Gamma( \Si)
\times \BZ$. Let 
\begin{equation}\label{defrep}
\rho_p(f,n)=Z_p(C(f,n))~.
\end{equation}

This defines a representation $\rho_p$ of $\widetilde
\Gamma(\Si)$ on $V_p(\Si)$. This representation can be described in
very concrete terms,  as
follows. Let 
 $\mathfrak w=\prod_{i=1}^N
{\alpha_i}^{\varepsilon_i}$ be a word so that $D(\mathfrak w)=f$. Let
$L(\mathfrak w)\subset  \I \times \Si $ be the framed link considered
in 
 Section~\ref{newsec3}.
 Let $s({\mathfrak w})$ be the skein element in
$\I\times \Si$ obtained by cabling every component of 
this framed link 
with $\omega$. We consider $\I\times \Si$ as an extended manifold by
giving it weight zero.

\begin{thm}\label{8.2} One has that
\begin{equation}\label{form1}
\rho_p(f,n)= 
 \kappa^{n-n_\lambda^0(\mathfrak w)}
  Z_p(\I \times \Si,s({\mathfrak w}))~.
\end{equation}
\end{thm}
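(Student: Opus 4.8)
The plan is to compute $Z_p(C(f,n))$ by first reducing the weight $n$ to the specific weight produced by extended surgery, and then applying the Surgery Axiom component-by-component to the framed link $L(\mathfrak w)$. Recall from Theorem~\ref{new4.1} that $C(\mathfrak w)$, the product $\prod_i C(\alpha_i)^{\varepsilon_i}$ in $\widetilde\Gamma(\Si)$, equals $C(f, n^0_\lambda(\mathfrak w))$, and that $C(\mathfrak w)$ is precisely the result of extended surgery on $\I\times\Si$ (with weight zero) along the framed link $L(\mathfrak w)$. The first step is therefore to observe that, since increasing the weight of an extended $3$-manifold by one multiplies its TQFT vector by $\kappa$, we have
\begin{equation}\notag
\rho_p(f,n)=Z_p(C(f,n))=\kappa^{\,n-n^0_\lambda(\mathfrak w)}\,Z_p(C(f,n^0_\lambda(\mathfrak w)))=\kappa^{\,n-n^0_\lambda(\mathfrak w)}\,Z_p(C(\mathfrak w))~.
\end{equation}
This isolates the remaining task: to show $Z_p(C(\mathfrak w))=Z_p(\I\times\Si, s(\mathfrak w))$, i.e. that applying $Z_p$ to the extended surgery along $L(\mathfrak w)$ gives the same vector as cabling $L(\mathfrak w)$ with $\omega$ inside the weight-zero cylinder.

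The second step is to prove this identity by an induction on the number $N$ of link components, using the Surgery Axiom as the inductive engine. Extended surgery along $L(\mathfrak w)$ can be performed one component at a time (by the associativity of extended gluing noted in Section~\ref{sec2}). For a single component $K$ of $L(\mathfrak w)$, one must match the local data: the tubular neighborhood $\nu(K)\cong\overline{S^1}\times D^2$ carries the meridian lagrangian, which is exactly the hypothesis $\lambda(\mathcal T)=\langle\,\mathrm{pt}\times S^1\rangle$ under which the Surgery Axiom applies. The Surgery Axiom then lets us replace the surgered solid torus $D^2\times S^1$ by the pair $(\overline{S^1}\times D^2,\omega)$ without changing the TQFT vector, i.e. it replaces surgery along $K$ by cabling $K$ with $\omega$. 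Iterating over all $N$ components converts the entire extended surgery $C(\mathfrak w)=(\I\times\Si)_{L(\mathfrak w)}$ into $(\I\times\Si, s(\mathfrak w))$, which is the desired equality.

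The main obstacle I anticipate is bookkeeping of the extended (weight/lagrangian) structure so that no stray factors of $\kappa$ are introduced when one applies the Surgery Axiom repeatedly. The Surgery Axiom as stated is an equality of vectors in $V_p(\mathcal T)$ for the \emph{weight-zero} copies of $\overline{S^1}\times D^2$ and $D^2\times S^1$ with the meridian lagrangian on $\mathcal T$; one must check that the extended gluing of $\nu(K)$ (weight zero, meridian lagrangian) back into $M\setminus\Int(\nu(K))$ reproduces the weight-zero cylinder on both sides, so that the glued-up weights agree and the functoriality $Z_p(M'\circ M)=Z_p(M')\circ Z_p(M)$ introduces no additional Maslov-index correction. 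This is exactly the point, already emphasized in Section~\ref{sec2}, that the meridian lagrangian is the natural choice making weight-zero regluing of $\nu(K)$ recover the original weight (via Lemma~\ref{2lag}); verifying that this carries through at each inductive step, and that the total weight of $C(\mathfrak w)$ really is $n^0_\lambda(\mathfrak w)$ and not shifted, is where care is needed. Once the weight accounting is pinned down, the remainder is a routine iteration of the Surgery Axiom and formula (\ref{form1}) follows.
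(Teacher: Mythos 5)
Your proof is correct and takes essentially the same route as the paper's: both identify the extended surgery on the weight-zero cylinder $\I\times\Si$ along $L(\mathfrak w)$ with $C(f,n^0_\lambda(\mathfrak w))$ via Theorem~\ref{new4.1}, convert surgery into $\omega$-cabling by the Surgery Axiom, and account for the weight shift by powers of $\kappa$ (equivalently, by the action of $W=C(\Id_\Si,1)$ as multiplication by $\kappa$). The only difference is one of exposition: you make the component-by-component iteration of the Surgery Axiom and the accompanying weight/lagrangian bookkeeping explicit, which the paper leaves implicit in its one-line appeal to the axiom.
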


Here, 
$\la$ is the given lagrangian $\lambda(\Si)$,  and 
 $n_\lambda^0(\mathfrak w)=\sigma(L^0_\la(\mathfrak  w))$, the
signature of the 
linking matrix of the 
framed link
$L^0_\la (\mathfrak w)$ (see 
Theorem~\ref{new4.1}).

\begin{proof} 
As explained in 
the proof of 
Theorem~\ref{new4.1},
extended surgery along $L(\mathfrak w)$ on the identity
  mapping cylinder $C(\Id_\Si,0)$ gives $C(f,n_0)$ where 
       $n_0=\sigma(L^0_\la(\mathfrak  w))$.
Therefore the  surgery axiom implies that 
$$Z_p(\I \times \Si,s({\mathfrak w}))= Z_p(C(f,n_0))=\rho_p(f,n_0)~.$$
This differs from $\rho_p(f,n)$ by the factor $\rho_p(W)^{n-n_0}$
where $W=C(\Id_\Si,1)$ is
  the generator of the center of $\widetilde
\Gamma(\Si)$. But $W$ acts as multiplication by $\kappa$ on
$V_p(\Si)$.  This proves the result. 
\end{proof} 

\begin{rem} {\em 
If the surface $\Si$ is connected, then 
the module $V_p(\Si)$ can be presented as
    a quotient of the skein module of a handlebody $\BH$ with boundary
    $\Si$. In other words, the natural map which sends a skein element
    $x$ in $\BH$ to the vector $Z_p(\BH,x)$ in $V_p(\Si)$, is onto. This follows from the surgery axiom as in 
    \cite[Proposition~1.9]{BHMV2}. We remark that we can choose $\BH$ 
       arbitrarily here;  
in particular, we do {\em not} need to require that the given lagrangian
    $\lambda(\Si)$ be the kernel of $H_1(\Si)
    \rightarrow H_1(\BH)$. The endomorphism $Z_p(\I \times
    \Si,s({\mathfrak w}))$ lifts to an endomorphism of the skein
    module of the handlebody $\BH$. This endomorphism can then be computed
    skein-theoretically using recoupling theory \cite{KL,MV}. Note that no further powers of $\kappa$ are
    introduced when gluing 
    $\BH$ to $(\I \times \Si,s({\mathfrak w}))$, 
    because in the Maslov index
    computation, two of the three lagrangians are the
    same.\footnote{This is true even though there may well be a Maslov index
    contribution when gluing $C(f,n)$ to $\BH$. Here one sees the
    strength of the surgery axiom.}  Thus the expression (\ref{form1})  in Theorem~\ref{8.2} gives a completely explicit
    description of $\rho_p(f,n)$. In particular, it explains how
    $\rho_p(f,n)$ depends on the lagrangian $\la$. 
  }\end{rem}

Here is an example showing how to use Theorem~\ref{8.2} to identify
specific lifts of mapping classes to the extended mapping class group.
Let ${\mathcal T}_c$ denote a torus equipped with
one banded point colored $2c$.  Assume ${\mathcal T}_c$ is presented as
the boundary of a solid torus which we will denote by $\BH$. Let $m$ and $\ell$ be 
simple closed  
curves on ${\mathcal T}_c$ which avoid the banded point, and such that $m$ is
a  meridian of $\BH$
and $\ell$ is a longitude. Mapping classes of ${\mathcal T}_c$
must preserve the banded point, so that the ordinary mapping class
group $\Gamma( {{\mathcal T}_c})$ of 
${\mathcal T}_c$  is the mapping class group of the one-holed torus
obtained from ${\mathcal T}_c$ by removing an open disk neighborhood
of the banded point. This group 
is generated by the Dehn twists $D(m)$ and $D(\ell)$. They
satisfy 
\begin{equation}
\notag
D(m)D(\ell)D(m)=D(\ell)D(m)D(\ell)~,
\end{equation}
 and this is the only
relation in a presentation of $\Gamma( {{\mathcal T}_c})$ in
terms of these generators.

In \cite{GM1}, we represented certain lifts of $D(m)$ and $D(\ell)$ to
the extended mapping class group $\widetilde\Gamma({\mathcal T}_c)$ by
certain 
automorphisms $t$ and $t^\star$ 
of $V_p({\mathcal
  T}_c)$. Using Theorem~\ref{8.2}, we can identify exactly which lifts
these are by computing their weights as extended cobordisms. Of course, for this to make sense we need to choose a lagrangian $\la$
for  ${\mathcal T}_c$. We choose $\la$ to be the lagrangian given by
$\BH$. Thus $[m]\in \la$ but $[\ell] \not\in \la$. 
\begin{prop} As automorphisms of $V_p({\mathcal
  T}_c)$, one has that 
\begin{align}
\label{tfo} t\,\,&=\rho_p(D(m),0)\\
\label{tsfo} t^\star&= \rho_p(D(\ell), 1)
\end{align}
\end{prop}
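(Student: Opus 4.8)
The plan is to deduce everything from Theorem~\ref{8.2}, turning the problem into two one-line signature computations plus a normalization check. First I would recall from \cite{GM1} the precise skein-theoretic definitions of the automorphisms $t$ and $t^\star$ of $V_p({\mathcal T}_c)$, and the fact that they are lifts of $D(m)$ and $D(\ell)$ respectively. By Remark~\ref{comp4}, a lift of a given mapping class is determined by its integer weight, so each of $t$ and $t^\star$ equals $\rho_p(D(m),n_1)$ and $\rho_p(D(\ell),n_2)$ for unique integers $n_1,n_2$; the content of the proposition is exactly that $n_1=0$ and $n_2=1$. Since $W=C(\Id_\Si,1)$ acts on $V_p({\mathcal T}_c)$ as multiplication by $\kappa$, the whole computation reduces to comparing $t$ and $t^\star$ against the vectors $Z_p(\I\times{\mathcal T}_c,s(m))$ and $Z_p(\I\times{\mathcal T}_c,s(\ell))$ produced by cabling the one-component links $L(m)=m_-$ and $L(\ell)=\ell_-$ with $\omega$.

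Next I would compute the two signatures that enter formula~(\ref{form1}). Because we chose $\lambda=\lambda({\mathcal T}_c)$ to be the lagrangian given by $\BH$, we have $[m]\in\lambda$ and $[\ell]\notin\lambda$. By Remark~\ref{remcol} (equivalently Lemma~\ref{1twist}) this yields $n_\lambda^0(m)=\sigma(L^0_\lambda(m))=-1$ and $n_\lambda^0(\ell)=\sigma(L^0_\lambda(\ell))=0$. Indeed, for $\mathfrak w=m$ the linking matrix of $L^0_\lambda(m)$ is a $(-1)$-entry on the $m_-$ component together with a zero block from the pushed-off meridian, of signature $-1$; for $\mathfrak w=\ell$ the off-diagonal linking number with the pushed-off meridian is $1$, giving an indefinite $2\times 2$ block of signature $0$. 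Substituting into Theorem~\ref{8.2} gives
\[
Z_p(\I\times{\mathcal T}_c,s(m))=\rho_p(D(m),-1),\qquad
Z_p(\I\times{\mathcal T}_c,s(\ell))=\rho_p(D(\ell),0).
\]

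Finally I would match these against $t$ and $t^\star$. Since the computed weights $-1$ and $0$ differ from the asserted weights $0$ and $1$ by a uniform shift of one, the two identities~(\ref{tfo}) and~(\ref{tsfo}) are together equivalent to the single normalization statement that the \cite{GM1} operators satisfy $t=\kappa\,Z_p(\I\times{\mathcal T}_c,s(m))$ and $t^\star=\kappa\,Z_p(\I\times{\mathcal T}_c,s(\ell))$, i.e.\ that each carries exactly one extra factor of $\kappa$ relative to the bare $\omega$-cabling; multiplying by $\kappa$ raises the weight by one, sending $-1\mapsto 0$ and $0\mapsto 1$ as required.

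The hard part will be this last bookkeeping step rather than the Maslov/signature computation. One must pin down precisely how $t$ and $t^\star$ were normalized in \cite{GM1} — in particular the framing convention used for the $\omega$-cable and any global power of $\kappa$ built into their definition — and verify that it contributes exactly one power of $\kappa$ in each case. A single misplaced factor of $\kappa$, or a sign slip in $\sigma(L^0_\lambda)$, would change the asserted weights, so reconciling the conventions of \cite{GM1} with those of Theorem~\ref{8.2} is where the care is needed.
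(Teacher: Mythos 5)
Your reduction is exactly the paper's: both arguments run the proposition through Theorem~\ref{8.2}, and your signature computations $\sigma(L^0_\lambda(m))=-1$ and $\sigma(L^0_\lambda(\ell))=0$ (via Remark~\ref{remcol}, using $[m]\in\lambda$, $[\ell]\notin\lambda$) are correct and agree with the paper. However, the step you defer as ``the hard part'' --- verifying that $t=\kappa\,Z_p(\I\times{\mathcal T}_c,s(m))$ and $t^\star=\kappa\,Z_p(\I\times{\mathcal T}_c,s(\ell))$ --- is the actual content of the paper's proof, and leaving it unverified means you have only shown the proposition to be \emph{equivalent} to a normalization statement, not proved it. The paper closes this gap by recalling the definitions from \cite{GM1}: $t$ (resp.\ $t^\star$) places $\omega_+$ on the \emph{zero-framed} meridian $m_0$ (resp.\ longitude $\ell_0$) pushed into the interior, where $\omega_+$ is by definition $\kappa$ times $\omega$ given a full \emph{negative} twist. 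Cabling a zero-framed curve with the negatively twisted $\omega$ yields the same skein element as cabling the $(-1)$-framed curve $m_-$ (resp.\ $\ell_-$) with $\omega$ itself, so that ($m_0$ cabled by $\omega_+$) equals $\kappa$ times ($m_-$ cabled by $\omega$); this is precisely the single factor of $\kappa$ in each case that your argument requires. Without carrying out this identification, a reader cannot tell whether the answer is $(0,1)$ or, say, $(-1,0)$.

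A secondary point: your appeal to Remark~\ref{comp4} to conclude that $t=\rho_p(D(m),n_1)$ for a \emph{unique} integer $n_1$ is incorrect. That remark says the extended mapping cylinder $C(f,n)$ determines the pair $(f,n)$ as a morphism of the cobordism category; it does not say that $\rho_p$ is injective on such cylinders, and indeed it is not, since $\kappa$ is a root of unity and hence $\rho_p(D(m),n)$ is periodic in $n$. This does not damage the substance of your argument --- what you actually need (granting the normalization) is the asserted equality at the specific weights $0$ and $1$ --- but the uniqueness framing should be dropped.
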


\begin{proof} The automorphisms $t$ and $t^\star$ were defined skein-theoretically in
\cite{GM1}. We briefly review the
definition. The module $V_p({\mathcal
  T}_c)$  is a quotient of the `relative' skein module of $\BH$, where
the word `relative' indicates that the skein elements are linear
combinations of banded trivalent graphs in $\BH$ which nicely meet the banded point
colored $2c$ on the boundary of $\BH$. Let $\omega_+$ denote  $\kappa$
times the skein element
in the solid torus 
obtained by giving $\omega$ a full negative twist. (An explicit
formula for  $\omega_+$,  derived from \cite{BHMV1}, is given in
\cite{GM1}.) Then $t$ is the self-map of $V_p({\mathcal
  T}_c)$ which sends a skein element $x$ to $x$ union
$\omega_+$ placed on the zero-framed meridian pushed slightly into the
interior. Another definition of $t$ is as the self-map of $V_p({\mathcal
  T}_c)$ induced by a full positive twist of the solid torus $\BH$. See
Figure~\ref{omtwist}.
\begin{figure}[h]
\includegraphics[width=1.8in]{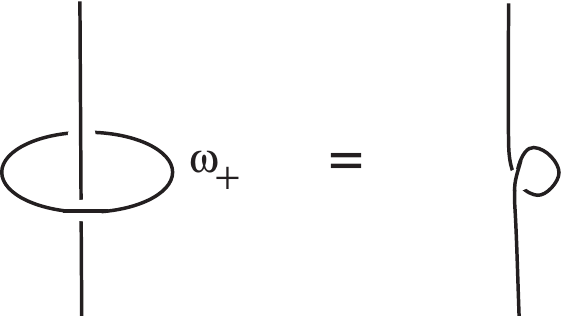}
\caption{A picture for $t$. Encircling a strand with $\omega_+$ has the same
effect in TQFT as giving that strand a positive twist.} \label{omtwist}
\end{figure} The map $t^\star$ is defined similarly 
(to the first description of $t$) 
except that we use the
zero-framed longitude in place of the zero-framed meridian. In order
to make contact with Theorem~\ref{8.2}, we denote by $m_0$ and
$\ell_0$ the meridian and longitude sitting on ${\frac 1 2} \times
{\mathcal T}_c \subset \I\times {\mathcal T}_c$, with zero framing
relative to the surface. Then the definitions of $t$ and 
$t^\star$ 
can
be reformulated as follows:  
$$t=Z_p(\I\times {\mathcal T}_c,\ \text{$m_0$ cabled by $\omega_+$})$$ 
$$t^\star=Z_p(\I\times {\mathcal T}_c,\ \text{$\ell_0$ cabled by
  $\omega_+$})$$ 
Now [$m_0$ cabled by $\omega_+$] is the same as $\kappa$ times [$m_-$
cabled by $\omega$], where $m_-$ is like $m_0$ but with $-1$ framing
relative to the surface, as in Lemma~\ref{1twist}. Thus we have a
situation like on the 
right hand side
of (\ref{form1}) in Theorem~\ref{8.2},
and formula (\ref{tfo}) for 
$t$ follows from this by a signature computation. Formula (\ref{tsfo})
follows similarly.
\end{proof}

\begin{rem}{\em  The following proof of (\ref{tfo}) and
    (\ref{tsfo}) directly from the surgery axiom may be instructive.  Since [$m_0$ cabled by $\omega_+$] is the same as $\kappa$ times [$m_-$
cabled by $\omega$], the surgery axiom gives $$t=\kappa Z_p(C(m))$$
where $C(m)$ is extended surgery along $m_-$ on
$\I\times {\mathcal T}_c$ (see Lemma~\ref{1twist}). By
Lemma~\ref{1twist}, since $[m]\in\la$, we have
$C(m)=C(D(m),-1)$. Hence $$t=\kappa
Z_p(C(D(m),-1))= Z_p(C(D(m),0))=\rho_p(D(m),0)~.$$ 
We similarly have $$t^\star=\kappa Z_p(C(\ell))$$ but this
time $[\ell]\not\in\la$,  so Lemma~\ref{1twist} gives  $C(\ell)=C(D(\ell),0)$ and hence $$t^\star=\kappa
Z_p(C(D(\ell),0))= Z_p(C(D(\ell),1))=\rho_p(D(\ell),1)~.$$ Thus, the
reason that the weights come out differently for $t$ than for $t^\star$ is
that $[m]\in \la$ but $[\ell] \not\in \la$.
}\end{rem}

\begin{rem}\label{8.6}{\em More generally, let $\alpha$ be a simple closed curve
    on $\Si$ and define $\alpha_0$ and $\alpha_-$ as above. Consider  
$$W(\alpha)= W\circ C(\alpha) \in \widetilde\Gamma(\Si)^{++}$$ as defined
    in Section~\ref{sec3}. We have 
\begin{align}\notag
\rho_p(W(\alpha))&= \kappa \rho_p(C(\alpha))= \kappa Z_p(\I\times \Si,\
\text{$\alpha_-$ cabled by $\omega$})\\
&= Z_p(\I\times \Si,\
\text{$\alpha_0$ cabled by $\omega_+$})  
\end{align} 
This defines a representation of $\widetilde\Gamma(\Si)^{++}$ on
$V_p(\Si)$. It is a fact that the skein element $\omega_+$ has
coefficients in the subring of $k_p$ spanned by $A$ and $\frac 1 p$;
in other words, $\kappa$ is not needed to define this representation.
In the case $\Si={\mathcal T}_c$, we have
    $t=\rho_p(W(m))$ and $t^\star=\rho_p(W(\ell))$. In general, we have the following skein-theoretical interpretation
    of $\rho_p(W(\alpha))$ for a simple closed curve $\alpha$. Think of $V_p(\Si)$ as
    a quotient of the skein module of a handlebody $\BH$ with boundary
    $\Si$. 
Then $\rho_p(W(\alpha))$ is the self-map of $V_p(\Si)$ which sends a skein element $x$ to $x$ union
$\omega_+$ placed on the zero-framed curve $\alpha$ pushed slightly into the
interior. We emphasize that this is true even if $\lambda(\Si)$ is not
equal to the kernel of $H_1(\Si) \rightarrow H_1(\BH)$. 
In analogy with \cite{MR}, we call $W(\alpha)$ the {\em geometric lift} of the Dehn twist $D(\alpha)$ 
to the extended mapping class group. 
}\end{rem}

The following result was stated in \cite[Remark 4.5]{GM1}. 
Let
$q=A^2$. This is a primitive $p$-th root of unity.

\begin{prop}\label{8.7} As automorphisms of $V_p({\mathcal
  T}_c)$, one has that $t t^\star t=t^\star t t^\star$ and
\begin{equation}\label{relcomp}(tt^\star)^6=q^{-6+2c(c+1)-p(p+1)/2}\Id_{V_p({\mathcal T}_c)}~.
\end{equation} 
\end{prop}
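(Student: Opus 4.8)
The plan is to work entirely inside the extended mapping class group $\widetilde\Gamma({\mathcal T}_c)$ and then apply the representation $\rho_p$. By Remark~\ref{8.6} we have $t=\rho_p(W(m))$ and $t^\star=\rho_p(W(\ell))$, where the geometric lifts are $W(m)=C(D(m),0)$ and $W(\ell)=C(D(\ell),1)$ (using Lemma~\ref{1twist} and formula (\ref{41}), since $[m]\in\la$ while $[\ell]\notin\la$ for the lagrangian $\la$ spanned by $[m]$). As prescribed in Section~\ref{sec6}, all homological and surgical computations are carried out on the closed torus $\widehat{{\mathcal T}_c}$. Since $\rho_p$ is a homomorphism, both asserted identities reduce to identities in $\widetilde\Gamma({\mathcal T}_c)$.

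For the braid relation, first I would observe that $W(m)W(\ell)W(m)$ and $W(\ell)W(m)W(\ell)$ project to the same mapping class $f=D(m)D(\ell)D(m)=D(\ell)D(m)D(\ell)$, so they differ by a central power $W^k$. The two words $m\ell m$ and $\ell m\ell$ have equal exponent sum $3$, so by Corollary~\ref{new52} one has $k=\sigma(L^0_\la(m\ell m))-\sigma(L^0_\la(\ell m\ell))$. I would then show $k=0$; the quickest route is a direct evaluation of the Maslov cocycle $m_\la$ of (\ref{ml1}) on lines in $H_1(\widehat{{\mathcal T}_c})\cong\BQ^2$ (equivalently, that the braid relator $\mathfrak v=m\ell m\ell^{-1}m^{-1}\ell^{-1}$ satisfies $n_\la(\mathfrak v)=0$, a signature computation independent of $\la$ by Lemma~\ref{relator}). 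Applying $\rho_p$ gives $t t^\star t=t^\star t t^\star$.

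For the power relation, I would use the chain relation $(D(m)D(\ell))^6=D(\partial)$ in $\Gamma({\mathcal T}_c)$, where $\partial$ is parallel to the boundary (encircling the banded point). Thus $\mathfrak u=(m\ell)^6\partial^{-1}$ is a relator with exponent sum $e(\mathfrak u)=11$; this is exactly the relator invoked in Lemma~\ref{55}. The heart of the argument is the signature computation $\sigma(L_\la(\mathfrak u))=-7$, whence $n_\la(\mathfrak u)=11-7=4$ and $W(\mathfrak u)=C(\Id_{{\mathcal T}_c},4)=W^4$ by Corollary~\ref{new52} and Lemma~\ref{relator}. Here $L_\la(\mathfrak u)$ consists of twelve $(-1)$-framed layered copies of $m,\ell$ together with one $(+1)$-framed copy of $\partial$; since $[\partial]=0$ in $H_1(\widehat{{\mathcal T}_c})$ this last component is an unknot split from the rest and contributes $+1$, so the real work is the signature $-8$ of the $12\times12$ block coming from $(m\ell)^6$. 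Because $[\partial]=0\in\la$, formula (\ref{41}) gives $W(\partial)=C(D(\partial),0)=C((D(m)D(\ell))^6,0)$ (note $D(\partial)\neq\Id$ as a mapping class of the one-holed torus, though it is trivial on homology), and combining with $W((m\ell)^6)=(W(m)W(\ell))^6$ yields
\begin{equation}\notag
(W(m)W(\ell))^6=W(\mathfrak u)\,W(\partial)=W^4\cdot C\bigl((D(m)D(\ell))^6,0\bigr)=C\bigl((D(m)D(\ell))^6,\,4\bigr).
\end{equation}

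Finally I would apply $\rho_p$. Since $W=C(\Id_{{\mathcal T}_c},1)$ acts as multiplication by $\kappa$, this gives $(t t^\star)^6=\kappa^4\,Z_p(C((D(m)D(\ell))^6,0))$. The weight-zero mapping cylinder of the boundary twist $D(\partial)$ acts as the scalar twist coefficient of the boundary color $2c$, namely $(-1)^{2c}A^{(2c)^2+2(2c)}=q^{2c(c+1)}$ (the standard skein-theoretic value). Using $\kappa^2=A^{-6-p(p+1)/2}$ and $q=A^2$, one computes $\kappa^4=q^{-6-p(p+1)/2}$, so that $(t t^\star)^6=q^{-6-p(p+1)/2}\,q^{2c(c+1)}\Id=q^{-6+2c(c+1)-p(p+1)/2}\Id$, as claimed. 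The main obstacle is the explicit signature computation $\sigma(L_\la(\mathfrak u))=-7$ (equivalently $\sigma(L^0_\la((m\ell)^6))=-8$); everything else is bookkeeping with the central extension together with the known twist coefficient.
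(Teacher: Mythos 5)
Your proposal is correct and follows essentially the same route as the paper: both identities are reduced to bookkeeping in $\widetilde\Gamma({\mathcal T}_c)$ via Corollary~\ref{new52} and Lemma~\ref{relator}, using the relator $\mathfrak u=(m\ell)^6\delta^{-1}$ with $e(\mathfrak u)=11$ and $\sigma(L_\la(\mathfrak u))=-7$ to get $W(\mathfrak u)=W^4$, and then the twist eigenvalue $\mu_{2c}=q^{2c(c+1)}$ together with $\kappa^4=q^{-6-p(p+1)/2}$. The only cosmetic difference is in the braid relation, where the paper computes $\sigma(L^0_\la(m\ell m))=\sigma(L^0_\la(\ell m\ell))=-2$ directly while you reduce to showing the braid relator has $n_\la=0$; these are equivalent computations.
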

\begin{proof} The first relation, 
in a somewhat different context, 
 is well-known \cite{Ro,MR}. 
  Here is a
  proof in our context.  Since $t t^\star
  t=\rho_p(W(m\ell m))$ and $t^\star t t^\star= \rho_p(W(\ell m\ell))$, it is
  enough to show that $W(m\ell m)=W(\ell m\ell)$. This is proved as follows. We have  $D(m \ell
  m)= D(\ell m\ell)$, $e(m \ell m)=e (\ell m\ell)=3$, and $
\sigma\left(
L^0_\la
\left(m \ell m\right) \right)=\sigma\left( 
L^0_\la
\left(\ell
    m\ell\right) \right)=-2$. By 
Corollary~\ref{new52},
 it follows that
both $W(m\ell m)$ and $W(\ell m\ell)$ are equal to $C(D(m\ell m),1)$. 

For the second relation, let $\delta$ be a simple
closed curve in ${\mathcal
  T}_c$ around the banded point colored $2c$. In the mapping class
group $\Gamma( {{\mathcal T}_c})$, the Dehn twist $D(\delta)$
is equal to $\big(D(m)D(\ell)\bigr)^6$.  Let $\mathfrak u$ denote the word $(m \ell)^6 \delta^{-1}$ which is a relator.
We have $e( \mathfrak u)= 11$ and 
$\sigma\left(
L_\la
( \mathfrak u )\right)= -7$.  (N.b., it is frequently efficient to begin such
signature calculations with a simplification of the framed link using
Kirby calculus while keeping track of signature changes.) Thus 
we deduce from Corollary~\ref{new52} and Lemma~\ref{relator} that
$$W({\mathfrak u})= C(\Id_{{\mathcal
  T}_c}, 11-7)=C(\Id_{{\mathcal
  T}_c},4)~,$$ hence
$\rho_p(W(\mathfrak u))$ is multiplication by $\kappa^4$. It follows that 
$$(tt^\star)^6=\Big(\rho_p(W(m))\rho_p(W(\ell))\Big)^6 =\kappa^4
\rho_p(W(\delta))= q^{-6-p(p+1)/2}\rho_p(W(\delta))~.$$
It remains to see that $$\rho_p(W(\delta))=
q^{2c(c+1)}\Id_{V_p({\mathcal T}_c)}~.$$ In view of Remark~\ref{8.6},
this can be done by a
skein-theoretical computation. We have to compute the effect of
encircling a $2c$-colored 
strand by $\omega_+$. As shown in Figure~\ref{omtwist},
this is the same as giving that strand a full positive twist.  Recall
from \cite{BHMV1} that the twist eigenvalue is $\mu_c=
(-A)^{c(c+2)}$. Thus $\mu_{2c}=q^{2c(c+1)}$. This completes the proof.
\end{proof}

We end this section with one further technique allowing one to identify
specific lifts of mapping classes to the extended mapping class
group. This will allow us to compute how $(t t^\star)^3$ acts on
$V_p({\mathcal T}_c)$, thereby giving another proof
of (\ref{relcomp}) in which the signature computation is easier.   
Let $\Si$ be the boundary of a handlebody $\BH$, which we give
weight zero.  Recall that
every element of $V_p(\Si)$ can be written $Z_p(\BH,x)$ for some skein
element $x$ in $\BH$. 

\begin{prop} Assume $f\in \Gamma( {\Si})$ is the restriction
  of a diffeomorphism $F$ of $\BH$. Assume further that the lagrangian
  $\lambda(\Si)$ is the kernel of $H_1(\Si) \rightarrow
H_1(\BH)$. Then $\rho_p(f,0)$ sends $Z_p(\BH,x)$ to $Z_p(\BH,F(x))$.
\end{prop}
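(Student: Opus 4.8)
The plan is to deduce the statement from functoriality of $Z_p$ together with the weight gluing formula (\ref{weightunion}). By (\ref{defrep}) we have $\rho_p(f,0)=Z_p(C(f,0))$, and $Z_p(\BH,x)$ is the vector associated to the extended cobordism $(\BH,x):\emptyset\rightsquigarrow\Si$, with $\BH$ given weight zero and target boundary $\partial\BH=\Si$ carrying the fixed lagrangian $\lambda$. Functoriality then gives
\begin{equation}
\notag
\rho_p(f,0)\bigl(Z_p(\BH,x)\bigr)=Z_p\bigl(C(f,0)\circ(\BH,x)\bigr),
\end{equation}
so it suffices to identify the composite extended cobordism $C(f,0)\circ(\BH,x)$ with $(\BH,F(x))$ (again with $\BH$ of weight zero). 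This reduces the problem to two independent checks: the underlying unweighted cobordism, and the weight.

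For the underlying cobordism, recall that $C(f,0)$ glues its source $\{0\}\times\Si$ to $\partial\BH$ by the identity, so the underlying manifold of the composite is $\BH$ with an external collar $\I\times\Si$ attached, which is diffeomorphic to $\BH$. The key point is that since $f=F|_\Si$ extends over $\BH$, one can build the collar-absorbing diffeomorphism $\Psi\colon\BH\cup_{\mathrm{id}}(\I\times\Si)\to\BH$ \emph{using $F$}: choosing an interior collar $c\colon\I\times\Si\hookrightarrow\BH$ of $\partial\BH$, one lets $\Psi$ shrink $\BH$ into its interior by a map isotopic to $F$ and send the external collar across $c$ so that $\Psi(1,p)=f(p)$ on the target boundary. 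This $\Psi$ is orientation preserving, carries the skein $x$ to a skein isotopic to $F(x)$, and matches the boundary identification of $C(f,0)$, whose target is identified with $\Si$ via $(1,p)\mapsto f(p)$. Hence the underlying composite cobordism equals $(\BH,F(x))$. I expect the direction conventions here --- ensuring that the skein becomes $F(x)$ rather than $F^{-1}(x)$, and that $\Psi$ respects the $f$-twisted boundary identification --- to be the main thing to get exactly right.

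For the weight, I would apply (\ref{weightunion}) to the gluing of $(\BH,x)$ and $C(f,0)$ along $\Si$. Both pieces have weight zero, so the composite weight equals $-\mu_\Si\bigl(\lambda_{\BH}(\Si),\lambda(\Si),\lambda'\bigr)$, where $\lambda'$ is the lagrangian contributed by $C(f,0)$. Now the hypothesis that $\lambda(\Si)$ is the kernel of $H_1(\Si)\to H_1(\BH)$ says precisely that $\lambda_{\BH}(\Si)=\lambda(\Si)=\lambda$; since two of the three lagrangians coincide, Lemma~\ref{2lag} gives $\mu_\Si=0$, so the composite has weight zero. Combining this with the previous paragraph, $C(f,0)\circ(\BH,x)=(\BH,F(x))$ as extended cobordisms, and applying $Z_p$ yields $\rho_p(f,0)(Z_p(\BH,x))=Z_p(\BH,F(x))$. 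It is worth emphasizing that the vanishing of this Maslov term --- and hence the absence of any spurious power of $\kappa$ --- depends essentially on the hypothesis $\lambda=\lambda_{\BH}(\Si)$; for a general lagrangian the weight would be nonzero.
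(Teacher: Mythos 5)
Your proof is correct and takes essentially the same approach as the paper: glue $(\BH,x)$ to the mapping cylinder $C(f,0)$, use the extension $F$ of $f$ over $\BH$ to identify the underlying glued cobordism with $(\BH,F(x))$ rel boundary, and then check that the Maslov index contribution to the weight vanishes so no power of $\kappa$ appears. The only (immaterial) difference is in the last step: you deduce vanishing from $\lambda_{\BH}(\Si)=\lambda(\Si)$ (the first two lagrangians in the gluing formula coincide), while the paper notes that $f$ preserves $\lambda(\Si)$ (so the last two coincide); both facts follow from the hypotheses and give $\mu=0$ by Lemma~\ref{2lag}.
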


\begin{proof} 

If we glue the pair $(\BH,x)$ to  the mapping cylinder of $f$ by
identifying the boundary of $\BH$ with the source of the mapping
cylinder by the identity map, and if we forget the weights for a
moment, the result is diffeomorphic, rel.~boundary, to the pair $(\BH,F(x))$. 
Thus the proposition holds up to a power of $\kappa$ which might come
from a Maslov index contribution. But in our situation $f$ preserves
$\lambda(\Si)$, so the Maslov index contribution is zero. This completes
the proof.
\end{proof}
\begin{ex} {\em Consider again the torus ${\mathcal T}_c$ equipped with
one banded point colored $2c$.  The Dehn twist $D(\delta)$ has a
square root $\theta$ called the {\em half-twist}.
This can be roughly described as the result of giving most of 
the   
torus 
 (sitting in 3-space as the boundary of 
an
unknotted solid torus) 
 a right handed twist through an angle $\pi$ 
  around 
  an axis passing through the banded point 
 (and three other 
points on the torus)
  while holding a neighborhood of the banded point fixed.  
In the mapping class
group $\Gamma( {{\mathcal T}_c})$, one has $\theta=
(D(m)D(\ell))^3$. Now $\theta$ extends to the solid torus, so
the proposition tells us that
$\rho_p(\theta, 0)$ can be computed skein-theoretically. 
The result is that 
\begin{equation}\label{half}
\rho_p(\theta,0)= (-1)^c q^{c(c+1)}
  \Id_{V_p({\mathcal T}_c)}~.
\end{equation} 
}\end{ex}
 
This calculation 
can be done by considering the basis
       $L_{c,0}  z^n$ 
of $V_p({\mathcal T}_c)$ 
 in
  the notation of 
  \cite{GM1}. 
  This basis
consists of eigenvectors, all with the same eigenvalue. Moreover,
since $\theta$ is a square root of the Dehn twist $D(\delta)$, the
eigenvalue must be a square
  root of $\mu_{2c}=q^{2c(c+1)}$. Determining the sign 
of the square root
is, however, a
  little subtle.
One way to see the factor $(-1)^c$ in (\ref{half}) is to convince
onself by drawing some pictures that the eigenvalue is $\delta(2c;c,c)\mu_c$, where
$\delta(2c;c,c)=A^{c^2}$ is the half-twist coefficient of
\cite[Theorem 3]{MV}.

\begin{cor} As automorphisms of $V_p({\mathcal
  T}_c)$, one has $$(tt^\star)^3=A^{-6-p(p+1)/2}(-1)^c q^{c(c+1)}\Id_{V_p({\mathcal T}_c)}~.$$
\end{cor}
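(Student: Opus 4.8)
The plan is to express $(tt^\star)^3$ as $\rho_p$ applied to a single weighted mapping cylinder $C(\theta,N)$ and then read off the scalar from the value $\rho_p(\theta,0)=(-1)^c q^{c(c+1)}\Id$ already computed in (\ref{half}), together with the fact that $\rho_p(W)$ acts as multiplication by $\kappa$ and that $\kappa^2=A^{-6-p(p+1)/2}$ by definition of $\kappa$. Since $W=C(\Id_{{\mathcal T}_c},1)$ is central, we have $C(\theta,N)=W^N\circ C(\theta,0)$ up to a Maslov term that vanishes by Lemma~\ref{2lag} (two of its three lagrangians being $\lambda$), so $\rho_p(C(\theta,N))=\kappa^N(-1)^c q^{c(c+1)}\Id$. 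Thus the corollary reduces entirely to showing that the integer weight is exactly $N=2$. I emphasize that the relation $(tt^\star)^6=q^{-6+2c(c+1)-p(p+1)/2}\Id$ from Proposition~\ref{8.7} only determines $(tt^\star)^3$ up to sign (it merely gives $\kappa^{2N}=\kappa^4$), so the genuine content lies in pinning $N$ down on the nose.

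To compute $N$ cleanly I would regroup $(tt^\star)^3=(tt^\star t)(t^\star tt^\star)$ and invoke the computation in the proof of Proposition~\ref{8.7}. Using $t=\rho_p(W(m))$ and $t^\star=\rho_p(W(\ell))$ (Remark~\ref{8.6}), it was shown there that $W(m\ell m)=W(\ell m\ell)=C(D(m\ell m),1)$. Because $\rho_p$ is a representation, both $tt^\star t$ and $t^\star tt^\star$ equal $\rho_p(C(D(m\ell m),1))$, whence
\[
(tt^\star)^3=\rho_p\bigl(C(D(m\ell m),1)^2\bigr).
\]
It then remains only to square the element $C(D(m\ell m),1)$ in $\widetilde\Gamma({\mathcal T}_c)$ using the multiplication law (\ref{comp}).

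Writing $g=D(m\ell m)$, formula (\ref{comp}) gives $C(g,1)^2=C\bigl(g^2,\,2+\mu(\lambda,g_\star\lambda,(g^2)_\star\lambda)\bigr)$. From the braid relation $D(m)D(\ell)D(m)=D(\ell)D(m)D(\ell)$ one checks that $g^2=(D(m)D(\ell))^3=\theta$ as mapping classes, and on $H_1$ of the torus $\theta$ acts as $-\Id$; hence $(g^2)_\star\lambda=\theta_\star\lambda=\lambda$. Therefore two of the three lagrangians in the Maslov index coincide, the Maslov term vanishes by Lemma~\ref{2lag}, and $N=2$. Combining the steps, $(tt^\star)^3=\rho_p(C(\theta,2))=\kappa^2\rho_p(\theta,0)=A^{-6-p(p+1)/2}(-1)^c q^{c(c+1)}\Id$, as claimed.

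The only delicate point is the weight bookkeeping giving $N=2$, and this is exactly where I expect the (modest) obstacle to lie, since it is here that one must resolve the sign ambiguity left open by extracting a square root from Proposition~\ref{8.7}. The advantage of the regrouping above is precisely that it replaces the signature computation for the long relator used in Proposition~\ref{8.7} by a single Maslov index of three lagrangian lines on the torus, which vanishes for the transparent reason that $\theta$ fixes every lagrangian; this is what makes the present signature computation easier.
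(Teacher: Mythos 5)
Your proof is correct, but it reaches the key identity $W((m\ell)^3)=C(\theta,2)$ by a different route than the paper. The paper's proof does one more linking-matrix computation from scratch: it evaluates $e((m\ell)^3)=6$ and $\sigma(L^0_\la((m\ell)^3))=-4$ and invokes Corollary~\ref{new52} to get the weight $2$ directly. You instead recycle the intermediate result of the proof of Proposition~\ref{8.7}, namely $W(m\ell m)=W(\ell m\ell)=C(D(m\ell m),1)$, and then square $C(D(m\ell m),1)$ inside $\widetilde\Gamma({\mathcal T}_c)$ using the cocycle formula (\ref{comp}); the resulting Maslov term $\mu\bigl(\la, D(m\ell m)_*\la, \theta_*\la\bigr)$ vanishes by Lemma~\ref{2lag} because $\theta$ acts as $-\Id$ on $H_1$ of the torus and hence fixes every lagrangian line. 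All the individual steps check out: the regrouping $(tt^\star)^3=(tt^\star t)(t^\star tt^\star)$, the braid-relation identity $D(m\ell m)^2=(D(m)D(\ell))^3=\theta$, and the final assembly $\rho_p(C(\theta,2))=\kappa^2\rho_p(\theta,0)$ with (\ref{half}) and $\kappa^2=A^{-6-p(p+1)/2}$. What your approach buys is that no fresh signature computation is needed at all — arguably pushing further the paper's own stated goal for this corollary ("another proof in which the signature computation is easier") — though it is not signature-free in an absolute sense, since it inherits the computation $\sigma(L^0_\la(m\ell m))=-2$ hidden inside Proposition~\ref{8.7}. What the paper's version buys is independence from the internals of that earlier proof: it needs only the statement of Corollary~\ref{new52} and a (short) Kirby-calculus evaluation, so it stands on its own. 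You are also right, and it is worth saying explicitly as you do, that Proposition~\ref{8.7} by itself only pins down $(tt^\star)^3$ up to sign, so locating the weight exactly is the genuine content of the corollary.
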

\begin{proof} We have that  $e((m\ell)^3)=6$, and $
  \sigma( 
L^0_\la
((m\ell)^3))=-4$.  Thus
  $$W((m\ell)^3)=C(\theta, 2)$$ by   
Corollary~\ref{new52}.
Hence
$$ (tt^\star )^3= \rho_p(W((m\ell)^3))=\rho_p(\theta,2)=\kappa^2
\rho_p(\theta,0)~, $$ which implies the result in view of
(\ref{half}).
\end{proof}

\section{Integral TQFT and representations in characteristic $p$}

In this section, we consider the $SO(3)$-TQFT $(Z_p,V_p)$ where
$p\geq 5$ is a prime. An integral refinement of this TQFT was defined
and studied in \cite{G,GM}. This gives in particular rise to
finite-dimensional representations of the ordinary mapping class group
in characteristic $p$. Our aim in this section is to explain 
the 
 role 
 played by the extensions $\widetilde
\Gamma(\Si)^+$ and $\widetilde
\Gamma(\Si)^{++} $ in this construction.

Recall $q=A^2$ is a primitive  $p$-th root of unity.
We denote the cyclotomic ring $\BZ[q]$ by
 $\BOplus$. We refer the
 reader to section 13 of \cite{GM} for the definition of the (refined)
 integral TQFT-module $\BSplus(\Si)$. It is a free $\BOplus$-module of
 finite rank. The ring $\BOplus$ is a Dedekind domain, and we sometimes refer to $\BSplus(\Si)$ as a
lattice. There is a canonical inclusion 
$$\BSplus(\Si) \hookrightarrow V_p(\Si)~.$$ We can think of this
inclusion as tensoring with $k_p$, the coefficient ring of
$V_p(\Si)$. Note that $k_p$ is obtained from $\BOplus$ by adjoining
$p^{-1}$ and $\kappa$ to it. 

Consider the action $\rho_p$ of the extended mapping class group $\widetilde
\Gamma(\Si)$ on $V_p(\Si)$ defined as in (\ref{defrep}) by 
$\rho_p(f,n)=Z_p(C(f,n)).$
Here is one of the main results of integral TQFT.

\begin{thm}[\cite{GM}] \label{9.1}
If $p \equiv 3
 \pmod{4}$, then the lattice $\BSplus(\Si)$ is preserved by $\widetilde
\Gamma(\Si)$. If $p \equiv 1
 \pmod{4}$, then $\BSplus(\Si)$ is preserved by the index two subgroup
  $\widetilde\Gamma(\Si)^{+}$ of $\widetilde
\Gamma(\Si)$. 
\end{thm}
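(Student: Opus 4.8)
The plan is to reduce the statement to a small set of generators of $\widetilde\Gamma(\Si)$ and to treat the ``geometric'' part and the ``central'' part separately. For the geometric part, recall from Remark~\ref{8.6} that $\widetilde\Gamma(\Si)^{++}$ is generated by the lifts $W(\alpha)$, and that each $\rho_p(W(\alpha))$ acts by cabling the zero-framed curve $\alpha$ with the skein element $\omega_+$, an operation involving no power of $\kappa$. The integral TQFT of \cite{GM} shows precisely that such operators preserve the lattice $\BSplus(\Si)$, despite the denominators $p$ occurring in the coefficients of $\omega_+$. I would take this as the starting point; it gives at once that $\widetilde\Gamma(\Si)^{++}$ preserves $\BSplus(\Si)$ for all $p$ as considered here.

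It then remains to analyse the central direction. Reading off the diagram in Remark~\ref{new76}, the group $\widetilde\Gamma(\Si)$ is generated by $\widetilde\Gamma(\Si)^{++}$ together with $W=C(\Id_\Si,1)$, and $\widetilde\Gamma(\Si)^{+}$ is generated by $\widetilde\Gamma(\Si)^{++}$ together with $W^2=C(\Id_\Si,2)$. Since $W$ acts on $V_p(\Si)$ as multiplication by $\kappa$, and $\BSplus(\Si)$ is a free $\BOplus$-module of positive rank, multiplication by $\kappa$ maps $\BSplus(\Si)$ into itself exactly when $\kappa\in\BOplus$, in which case (as $\kappa$ is a root of unity) it is a unit and preserves the lattice; likewise $W^2$ preserves $\BSplus(\Si)$ exactly when $\kappa^2\in\BOplus$. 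The whole theorem is thus reduced to deciding, according to $p\bmod 4$, whether $\kappa$ and $\kappa^2$ lie in $\BOplus$.

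This last point is elementary root-of-unity bookkeeping. The useful identity is $A=-q^{(p+1)/2}$ (from $A^p=-1$ and $q=A^2$), so that $A\in\BOplus=\BZ[q]$ and the roots of unity of $\BOplus$ are exactly the $2p$-th roots of unity $\langle A\rangle$. Writing $\kappa^2=A^{m}$ with $m=-6-p(p+1)/2$, a square root of $A^m$ lies in $\langle A\rangle$ if and only if $m$ is even. Now $p(p+1)/2$ is even for $p\equiv 3\pmod4$ and odd for $p\equiv 1\pmod4$, so $m$ is even in the first case and odd in the second. Hence for $p\equiv3\pmod4$ one has $\kappa=\pm A^{m/2}\in\BOplus^{\times}$, so $W$ preserves the lattice and $\widetilde\Gamma(\Si)$ does too; while for $p\equiv1\pmod4$ one has $\kappa\notin\BOplus$ but $\kappa^2=A^{m}\in\BOplus^{\times}$, so $W^2$ preserves the lattice and $\widetilde\Gamma(\Si)^{+}$ does, whereas $W$ does not (the positive rank of $\BSplus(\Si)$ forces $\kappa\BSplus(\Si)\neq\BSplus(\Si)$). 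This last remark, together with $W\notin\widetilde\Gamma(\Si)^{+}$, shows that the stabilizer is exactly $\widetilde\Gamma(\Si)^{+}$, explaining the index-two phenomenon.

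The step I expect to carry the real weight is the integrality asserted in the first paragraph: that cabling with $\omega_+$ sends $\BSplus(\Si)$ to itself even though $\omega_+$ has denominators $p$. This cancellation of denominators on lattice vectors is the substantive content of \cite{GM}, and I would quote it rather than reprove it. By contrast, the reduction to the generators $W(\alpha)$ and $W$ is formal given the descriptions of $\widetilde\Gamma(\Si)^{++}$ and $\widetilde\Gamma(\Si)^{+}$ in Sections~\ref{newsec5}--\ref{newsec7}, and the computation of $\kappa$ modulo $\BOplus$ is entirely elementary.
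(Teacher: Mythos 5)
Your proposal is logically sound, but the comparison here is unusual: the paper does not prove Theorem~\ref{9.1} at all. It imports it from \cite[Section~13]{GM}, and the only justification offered is the remark immediately following the theorem, whose content is precisely your root-of-unity computation ($\kappa\in\BOplus$ exactly when $p\equiv 3\pmod 4$, while $\kappa^{2}\in\BOplus$ always). So your write-up is, in effect, an expansion of that remark into an actual argument. The pieces you add are correct: $\widetilde\Gamma(\Si)=\langle\widetilde\Gamma(\Si)^{++},W\rangle$ and $\widetilde\Gamma(\Si)^{+}=\langle\widetilde\Gamma(\Si)^{++},W^{2}\rangle$ follow from Theorem~\ref{hom} and Corollary~\ref{hom-ind} (write $(f,n)=W^{\,n-n_\la(f)}\circ C(f,n_\la(f))$ and note the parity of $n-n_\la(f)$); the identity $A=-q^{(p+1)/2}$ and the fact that the roots of unity of $\BZ[q]$ are exactly $\langle A\rangle$ (this uses that $p$ is prime, as assumed in this section) give the dichotomy; and your observation that $\kappa\notin\BOplus$ forces $\kappa\,\BSplus(\Si)\not\subset\BSplus(\Si)$ is valid because a basis of the free $\BOplus$-module $\BSplus(\Si)$ remains a basis of $V_p(\Si)$ over $k_p$, so coefficients are unique.

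The one point to handle with care is the status of your quoted input. The statement you cite from \cite{GM} --- that the $\omega_+$-cabling operators $\rho_p(W(\alpha))$ of Remark~\ref{8.6} preserve $\BSplus(\Si)$ --- is itself a special case of Theorem~\ref{9.1}, since each $W(\alpha)$ lies in $\widetilde\Gamma(\Si)^{++}\subset\widetilde\Gamma(\Si)^{+}$. Your argument avoids circularity only because of how \cite{GM} is actually organized: there the lattice is defined as the $\BOplus$-span of vacuum vectors of cobordisms lying in the even subcategory of \cite{G}, so invariance under $\widetilde\Gamma(\Si)^{+}$ (hence under every $W(\alpha)$) is essentially definitional, the even subcategory being closed under composition; the substantive content of \cite{GM} is the freeness and finite rank of this lattice, not a ``miraculous cancellation of denominators'' in the cabling operators. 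In other words, invariance under all of $\widetilde\Gamma(\Si)^{+}$ is no harder in \cite{GM} than invariance under the generators you start from, so your decomposition does not reduce the logical burden --- but it is a legitimate repackaging, and its genuine virtue is that it isolates exactly where the $p\bmod 4$ dichotomy comes from, which is what the theorem as stated here is really about.
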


\begin{rem}{\em This result is stated in \cite[Section~13]{GM}. The
    reason that we need to restrict to $\widetilde\Gamma(\Si)^{+}$ if $p \equiv 1
 \pmod{4}$ is that in this case $\kappa=\rho_p(\Id_\Si, 1)$ does not
 lie in $\BOplus$. (But for $p \equiv 3
 \pmod{4}$, one has  $\kappa \in\BOplus$.) In
 \cite{GM}, we therefore mainly considered the slightly bigger coefficient
 ring $\BO=\BOplus[\kappa]$ and the lattice $\BS(\Si)= \BSplus(\Si)
 \otimes \BO$. (If $p \equiv 3
 \pmod{4}$, one has $\BO=\BOplus$ and $\BS(\Si)= \BSplus(\Si)$.) The
 lattice $\BS(\Si)$ is always preserved by the extended mapping
 class group $\widetilde\Gamma(\Si)$. 
}\end{rem}

Let $h$ denote $1-\zeta_p$; this is a prime in $\BOplus$. For every $N\geq 0$,
we may consider $${\mathcal{S}}^+_{p,N}(\Si)= \BSplus(\Si)\slash h^{N+1}
\BSplus(\Si)~,$$ which is a free module over the 
 quotient
 ring $\BOplus
\slash h^{N+1} \BOplus$. Note that for $N=0$ this ring is the finite
field $\BF_p$, so that ${\mathcal{S}}^+_{p,0}(\Si)$ is a
finite-dimensional $\BF_p$-vector space. 

\begin{de} Let $\rho_{p,N}$ be the representation on
  ${\mathcal{S}}^+_{p,N}(\Si)$ induced from $\rho_p$, where we
  restrict $\rho_p$ to $\widetilde\Gamma(\Si)^{+}$ if $p \equiv 3
 \pmod{4}$, and to $\widetilde\Gamma(\Si)^{++}$ if $p \equiv 1
 \pmod{4}$.
\end{de}

Note that in this definition, we have restricted to a further index
two subgroup with respect to the statement in Theorem~\ref{9.1}. This
is needed for the following corollary to hold.

\begin{cor}\label{modular} The representation $\rho_{p,0}$ on the $\BF_p$-vector
  space ${\mathcal{S}}^+_{p,0}(\Si)$ 
 factors through
a representation of
  the ordinary mapping class group $\Gamma( {\Si})$.
\end{cor}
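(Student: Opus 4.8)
The plan is to reduce the statement to a single scalar computation. Since $\rho_{p,0}$ is by definition the reduction modulo $h$ of the restriction of $\rho_p$ to $\widetilde\Gamma(\Si)^+$ (when $p\equiv 3\pmod 4$) or to $\widetilde\Gamma(\Si)^{++}$ (when $p\equiv 1\pmod 4$), it factors through $\Gamma(\Si)$ if and only if the generator of the kernel of the corresponding central extension acts trivially on ${\mathcal{S}}^+_{p,0}(\Si)$. By Remark~\ref{new76} this kernel is generated by $W^2=C(\Id_\Si,2)$ in the first case and by $W^4=C(\Id_\Si,4)$ in the second. As $W=C(\Id_\Si,1)$ acts as multiplication by $\kappa$, these generators act by the scalars $\kappa^2$ and $\kappa^4$, which lie in $\BOplus$ even though $\kappa$ itself may not. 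So everything comes down to computing $\kappa^2$ and $\kappa^4$ modulo $h$.

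Next I would compute $\kappa^2$ explicitly. Using that $A$ is a primitive $2p$-th root of unity, so $A^p=-1$, and that $q=A^2$, one has
\[
\kappa^2=A^{-6-p(p+1)/2}=q^{-3}(A^p)^{-(p+1)/2}=(-1)^{(p+1)/2}q^{-3}~.
\]
Since $p$ is odd, the integer $(p+1)/2$ is even precisely when $p\equiv 3\pmod 4$ and odd precisely when $p\equiv 1\pmod 4$. Hence $\kappa^2=q^{-3}$ if $p\equiv 3\pmod 4$, while $\kappa^2=-q^{-3}$ if $p\equiv 1\pmod 4$.

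Finally I would reduce modulo $h=1-q$. Since $\BOplus/h\BOplus\cong\BF_p$ with $q\equiv 1\pmod h$, every power of $q$ reduces to $1$. Thus if $p\equiv 3\pmod 4$ we obtain $\kappa^2\equiv 1\pmod h$, so $W^2$ acts trivially on ${\mathcal{S}}^+_{p,0}(\Si)$ and $\rho_{p,0}$ descends to $\Gamma(\Si)$. If $p\equiv 1\pmod 4$, then $\kappa^2\equiv -1\pmod h$, which is nontrivial in $\BF_p$; but $\kappa^4=(\kappa^2)^2=q^{-6}\equiv 1\pmod h$, so the generator $W^4$ of the kernel of $\widetilde\Gamma(\Si)^{++}\to\Gamma(\Si)$ acts trivially and $\rho_{p,0}$ again descends to $\Gamma(\Si)$.

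The only delicate point---and the crux of the argument---is the sign $(-1)^{(p+1)/2}$ produced by $A^p=-1$. It is exactly this sign that explains the remark following the definition of $\rho_{p,N}$: in the case $p\equiv 1\pmod 4$ the central element $W^2$ acts by $-1$ modulo $h$, which is why one cannot use $\widetilde\Gamma(\Si)^+$ (even though Theorem~\ref{9.1} guarantees it preserves the lattice) and must instead pass to the further index-two subgroup $\widetilde\Gamma(\Si)^{++}$, on which the kernel is generated by $W^4$ and acts trivially modulo $h$.
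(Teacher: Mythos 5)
Your proposal is correct and follows essentially the same route as the paper: reduce to the action of the central kernel generator ($W^2$ acting by $\kappa^2$, resp.\ $W^4$ acting by $\kappa^4$), extract the sign from $\kappa^2=A^{-6-p(p+1)/2}$ using the fact that $A$ is a primitive $2p$-th root of unity (your $A^p=-1$ is the same manipulation as the paper's $A=-q^{(p+1)/2}$), and note that powers of $q$ reduce to $1$ modulo $h$, so $\kappa^2\equiv(-1)^{(p+1)/2}$, which is $+1$ exactly when $p\equiv 3\pmod 4$. Your explicit identification $\kappa^2=(-1)^{(p+1)/2}q^{-3}$ is a slightly more precise version of the paper's statement that $\kappa^2$ is $(-1)^{p(p+1)/2}$ times a power of $q$, but the argument is the same.
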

\begin{proof} The generator of the kernel of
  $\widetilde\Gamma(\Si)^{+} \rightarrow \Gamma( {\Si})$ acts
  by $\kappa^2= A^{-6-p(p+1)/2}$. Since $p$ is odd and $A$ is a
  primitive $2p$-th root of unity, we have $A= - q^{(p+1)/2}$. It
  follows that $\kappa^2$ is $(-1)^{p(p+1)/2}$ times a power of
  $q$. Since $q \equiv 1 \pmod{h}$, it follows that $\kappa^2 \equiv
  (-1)^{p(p+1)/2} 
  \pmod{h}.$ Thus $\kappa^2 $ acts
  trivially on ${\mathcal{S}}^+_{p,0}(\Si)$  if $p \equiv 3
  \pmod{4}$. But if $p
  \equiv 1 \pmod{4}$, then $\kappa^2 $
  acts by $-1$ and only $\kappa^4$ acts trivially. 
\end{proof}

\begin{rem}{\em In practice, in order to compute $\rho_{p,0}(f)$ for a
    mapping class $f$, one should fix a lagrangian $\la$, 
    compute $\rho_p(f,n)$ for some  $n\equiv n_\la(f)\pmod 4$, write
    $\rho_p(f,n)$ as
    a matrix in a basis of the lattice $\BSplus(\Si)$ (see \cite{GM}),
    and reduce 
    coefficients modulo $h$. Of course, if $p \equiv 3
  \pmod{4}$, it suffices to take $n\equiv n_\la(f)\pmod 2$. Another
  way to make sure that one uses a lift of $f$ to the correct subgroup
  of the extended mapping class group is to write $f$ as a word in Dehn twists and to use the `geometric' lifts, as explained in
  Remark~\ref{8.6}. 
}\end{rem}

\begin{rem}{\em
In the case $p\equiv 1\pmod{4}$, the proof of 
\cite[14.2]{GM}
should
be amended to read 
$\widetilde\Gamma(\Si)^{++}$
instead of
`the (even) extended mapping class group'. 
  In the last sentence of  
\cite[p.837]{GM},
$\widetilde\Gamma(\Si)^{+}$ should be replaced with  $\widetilde\Gamma(\Si)^{++}$.
}\end{rem}

\begin{rem}{\em One may think of the sequence of representations  
$\rho_{p,N}$ as the $h$-adic expansion of the representation
$\rho_p$. Explicit matrices for this expansion
in the case of a one-holed torus were given in \cite{GM1}. 
Note that each $\rho_{p,N}$ factors through a finite group, 
since ${\mathcal S}^+_{p,N}(\Si)$ is
 a free module
of finite rank over $\BOplus
\slash h^{N+1} \BOplus$, which itself is finite. 
Thus the $h$-adic
expansion approximates the TQFT-representation $\rho_p$ by
representations into bigger and bigger finite groups. We believe this
$h$-adic expansion deserves further study.
}\end{rem}


\begin{thebibliography}{BHMV2}



\bibitem[An]{JEA} {\sc J.~E.~Andersen.} The Witten-Reshetikhin-Turaev
invariants of finite order mapping tori I, Aarhus Preprint 1995,
revised in 2011, arXiv:1104.5576

\bibitem[At] {A} {\sc M. Atiyah.}
On framings of $3$-manifolds.
{\em Topology} {\bf 29} (1990), no. 1, 1--7. 

\bibitem[BHMV1] {BHMV1} {\sc C. Blanchet, N. Habegger, G. Masbaum,
P. Vogel.} Three-manifold invariants derived from the Kauffman bracket.
{\em Topology} {\bf 31} (1992), 685-699.


\bibitem[BHMV2]{BHMV2} {\sc  C. ~Blanchet, N. ~Habegger, G. ~Masbaum,  P. ~Vogel.}
Topological quantum field theories derived from the Kauffman bracket,  {\em Topology}   {\bf 34} (1995), 883-927

\bibitem[Bi]{Bi} {\sc  J. ~Birman.} Braids, links, and mapping class groups, Annals of Mathematics Studies, {\bf 82}, 
Princeton University Press, 1974



\bibitem[B]{B} {\sc K. Brown.}
Cohomology of groups.
Graduate Texts in Mathematics, 87. Springer-Verlag, New York-Berlin, 1982. 
 

\bibitem[CLM]{CLM} {\sc S. Cappell, R. Lee, E. Miller}. On the Maslov
  index.  
{\em Comm. Pure Appl. Math.}  {\bf 47}  (1994),  no. 2, 121--186.

\bibitem[FM]{FM} {\sc B.~Farb, D.~ Margalit.} A primer on mapping
  class groups, \\
http://www.math.utah.edu/\ensuremath{\sim}margalit/primer/
 
 
\bibitem[Ge]{Ge} {\sc S. Gervais.} \ Presentation and central extensions of mapping class groups. {\em  Trans. Amer. Math. Soc. } {\bf 348}  (1996),   3097--3132.

\bibitem[G]{G} {\sc P. Gilmer.}  {Integrality for TQFTs}, {\em Duke Math. J.}, {\bf 125} (2004), no. 2, 389--413  
  

\bibitem[GMW]{GMW} { \sc P. Gilmer, G. Masbaum, P. van Wamelen.}
Integral bases for TQFT modules and unimodular representations of
  mapping class groups { \em Comment. Math. Helv.} {\bf 79} (2004),
  260--284.

\bibitem[GM1]{GM} {\sc P.  Gilmer, G. Masbaum.} Integral lattices in 
TQFT. { \em Annales Scientifiques de l'Ecole Normale Superieure}, {\bf 40}, (2007), 815--844

\bibitem[GM2]{GM1} {\sc P. Gilmer, G. Masbaum.} Integral TQFT  for a
  one-holed torus,  
{\em Pacific J. Math.} (to appear), 
arXiv:0908.2796 

\bibitem[H]{H} {\sc J.Harer.} The second homology group of the mapping
  class group of an orientable surface.  
{\em Invent. Math.}  {\bf 72}  (1983), no. 2, 221--239.

\bibitem[KL]{KL} {\sc L.H.~Kauffman, S.~Lins.} Temperley-Lieb recoupling theory and invariants of $3$-manifolds.  Annals of Mathematics Studies, {\bf 134} Princeton University Press (1994)


\bibitem[KS]{KS} {\sc M. Korkmaz, A. Stipsicz.}
The second homology groups of mapping class groups of oriented surfaces.
{\em Math. Proc. Cambridge Philos. Soc.} {\bf 134} (2003), no. 3, 479--489. 

\bibitem[LV]{LV} {\sc G. Lion, M.  Vergne.} The Weil representation, Maslov index and theta series. Progress in Mathematics, {\bf 6}. Birkh{\"a}user, Boston, Mass., (1980) 

\bibitem[MR]{MR}  {\sc G. Masbaum, J. Roberts.} On central extensions
  of mapping class groups. 
{\em Math. Ann.} {\bf 302}, 131-150 (1995). 

\bibitem[MV]{MV} {\sc G. Masbaum, P. Vogel.} $3$-valent graphs and the
  Kauffman Bracket,  
 {\em Pacific J.\ Math.} {\bf 164}, (1994) 361-381.


\bibitem[M]{Me} {\sc W. Meyer.}
Die Signatur von Fl{\"a}chenb{\"u}ndeln. {\em Math. Ann.} {\bf 201} (1973), 239--264. 

\bibitem[R]{Ro} {\sc J. Roberts.} Skeins and mapping class
  groups. {\em Math. Proc. Cam. Phil. Soc.} {\bf 115} (1994) 53-77.

\bibitem[T1]{T2} {\sc V. Turaev.}
A cocycle of the symplectic first Chern class and Maslov indices.
{\em Funktsional. Anal. i Prilozhen.} {\bf 18} (1984), no. 1, 43--48. 

\bibitem[T2]{T3}{\sc V. Turaev.}
The first symplectic Chern class and Maslov indices,
{\em Journal of Soviet Mathematics} {\bf 37} (1987) 1115-1127.


\bibitem[T3]{T} {\sc V. Turaev.} Quantum invariants of knots and 3-manifolds. De
Gruyter Studies in Mathematics {\bf 18}, 1994


\bibitem[W]{W}
{ \sc K. ~Walker.} {On Witten's 3-manifold invariants}, Preliminary Version, 1991  \\ http://canyon23.net/math/ 
















\end{thebibliography}
\end{document}